\newcommand{\Rmnum}[1]{\expandafter\@slowromancap\romannumeral #1@}
\def\frk{\frak}               % font for "Fraktur"
\def\Phi{{\frk n}}
\def\Phi{{\frk N}}
\def\opn#1#2{\def#1{\operatorname{#2}}} % to make operators
\opn\chara{char} \opn\length{\ell} \opn\pd{pd} \opn\rk{rk}
\opn\projdim{proj\,dim} \opn\injdim{inj\,dim} \opn\rank{rank}
\opn\depth{depth} \opn\grade{grade} \opn\height{height}
\opn\embdim{emb\,dim} \opn\codim{codim}
\opn\Tr{Tr} \opn\bigrank{big\,rank}
\opn\superheight{superheight}\opn\lcm{lcm}
\opn\trdeg{tr\,deg}%\emph{
\opn\reg{reg} \opn\lreg{lreg} \opn\ini{in} \opn\lpd{lpd}
\opn\size{size}\opn\bigsize{bigsize}
\opn\cosize{cosize}\opn\bigcosize{bigcosize}
\opn\sdepth{sdepth}\opn\sreg{sreg}
\opn\link{link}\opn\fdepth{fdepth}
\opn\index{index}
\opn\index{index}
\opn\indeg{indeg}
\opn\N{N}
\opn\SSC{SSC}
\opn\SC{SC}
\opn\conv{conv}
\opn\div{div} \opn\Div{Div} \opn\cl{cl} \opn\Cl{Cl}
\opn\Spec{Spec} \opn\Supp{Supp} \opn\supp{supp} \opn\Sing{Sing}
\opn\Ass{Ass} \opn\Min{Min}\opn\Mon{Mon} \opn\dstab{dstab} \opn\astab{astab}
\opn\Syz{Syz}
\opn\reg{reg}
\opn\Ann{Ann} \opn\Rad{Rad} \opn\Soc{Soc}
\opn\Im{Im} \opn\Ker{Ker} \opn\Coker{Coker} \opn\Am{Am}
\opn\Hom{Hom} \opn\Tor{Tor} \opn\Ext{Ext} \opn\End{End}
\opn\Aut{Aut} \opn\id{id}
\opn\nat{nat}
\opn\pff{pf}%   \pf exists already
\opn\Pf{Pf} \opn\GL{GL} \opn\SL{SL} \opn\mod{mod} \opn\ord{ord}
\opn\Gin{Gin} \opn\Hilb{Hilb}\opn\sort{sort}
\opn\initial{init}
\opn\ende{end}
\opn\height{height}
\opn\type{type}
\opn\aff{aff} \opn\con{conv} \opn\relint{relint} \opn\st{st}
\opn\lk{lk} \opn\cn{cn} \opn\core{core} \opn\vol{vol}
\opn\link{link} \opn\star{star}\opn\lex{lex}\opn\Mon{Mon}\opn\Min{Min}
\opn\gr{gr}
\def\pot#1#2{#1[\kern-0.28ex[#2]\kern-0.28ex]}
\opn\dirlim{\underrightarrow{\lim}}
\opn\inivlim{\underleftarrow{\lim}}
\def\Implies{\ifmmode\Longrightarrow \else
        \unskip${}\Longrightarrow{}$\ignorespaces\fi}
\def\implies{\ifmmode\Rightarrow \else
        \unskip${}\Rightarrow{}$\ignorespaces\fi}
\def\iff{\ifmmode\Longleftrightarrow \else
        \unskip${}\Longleftrightarrow{}$\ignorespaces\fi}
\newtheorem{Theorem}{Theorem}[section]
 \newtheorem{Lemma}[Theorem]{Lemma}
 \newtheorem{Corollary}[Theorem]{Corollary}
 \newtheorem{Proposition}[Theorem]{Proposition}
 \newtheorem{Example}[Theorem]{Example}
 \newtheorem{Definition}[Theorem]{Definition}
 \newtheorem*{Definition*}{Definition}
 \newtheorem{Conjecture}{Conjecture}
 \newtheorem*{Conjecture*}{Conjecture}
\let\epsilon\varepsilon
\let\kappa=\varkappa
\def\qed{\ifhmode\textqed\fi
      \ifmmode\ifinner\quad\qedsymbol\else\dispqed\fi\fi}
\def\textqed{\unskip\nobreak\penalty50
       \hskip2em\hbox{}\nobreak\hfil\qedsymbol
       \parfillskip=0pt \finalhyphendemerits=0}
\def\dispqed{\rlap{\qquad\qedsymbol}}
\opn\dis{dis}
\def\pnt{{\raise0.5mm\hbox{\large\bf.}}}
\opn\Lex{Lex}
\renewcommand{\qedsymbol}{\hfill \ensuremath{\square}}
\begin{document}

\title{Some Results on $\mathrm{v}$-Number of Monomial Ideals}

\author{Liuqing Yang}
\address{Department of Mathematics, Soochow University, Suzhou, 215006, China}
\email{20214007001@stu.suda.edu.cn}

\author{Kaiwen Hu}
\address{Department of Mathematics, Soochow University, Suzhou, 215006, China}
\email{hukaiwen6@outlook.com}

\author{Lizhong Chu*}
\footnote{* Corresponding author}
\address{Department of Mathematics, Soochow University, Suzhou, 215006, China}
\email{chulizhong@suda.edu.cn}

\begin{abstract}
This paper investigates the v-number of various classes of monomial ideals. First, we considers the relationship between the v-number and the regularity of the mixed product ideal $I$, proving that $\mathrm{v}(I) \leq \mathrm{reg}(S/I)$. Next, we investigate an open conjecture on the v-number: if a monomial ideal $I$ has linear powers, then for all $k \geq 1$, $\mathrm{v}(I^k) = \alpha(I)k - 1.$ We prove that if a monomial ideal $I$ with linear powers is a homogeneous square-free ideal and ($k \geq 1$) has no embedded associated primes, then $\mathrm{v}(I^k) = \alpha(I)k - 1.$ We have also drawn some conclusions about the k-th power of the graph.Additionally,  we calculate the v-number of various powers of edge ideals(including ordinary power ,square-free powers, symbolic powers).

Finally, we propose a conjecture that the v-number of ordinary powers of line graph is equal to the v-number of square-free powers.
\end{abstract}

\subjclass[2020]{2020 Mathematics Subject Classification. 05E40, 13D02, 13F55.}
\keywords{v-number, mixed product ideal, linear powers}

\maketitle

\section{Introduction}

Inspired by algebraic coding theory, particularly the concept of generalized Hamming weight (GHD), Susan M. Cooper and Rafael H. Villarreal investigated certain graded ideals in polynomial rings, especially Geramita ideals (which are non-mixed, one-dimensional, and generated by linear forms).Their goal was to use commutative algebra methods to study the GMD (Generalized Minimum Distance) function and analyze its behavior as various parameters change (\cite{cooper2020generalized} \cite{grisalde2021induced} \cite{martinez2017minimum} \cite{nunez2017footprint}). To explain the regularity of the minimum distance function $(\delta_I(d,r))$, 
it is important to note that the regularity $\mathrm{reg}(\delta_I)$ is difficult to calculate directly , Therefore, an algebraic tool is needed to characterize it.  The v-number provides an algebraic-geometric perspective for characterizing $\mathrm{reg}(\delta_I)$, and it has been proven that:
$$\mathrm{reg}(\delta_I) = \mathrm{v}(I).$$
Thus, the regularity of the minimum distance function is equal to the v-number,  allowing the stable points of $\delta_I$  to be calculated via the v-number. The concept of the v-number for graded ideals I is defined as follows:
\[
    \mathrm{v}(I) :=
    \begin{cases}
            \mathrm{min}\{ d\geq 1 \,|\, \exists f\in S_d \,\text{and}\, \mathrm{p}\in \mathrm{Ass}(I) \,\text{such that}\, (I:f) = \mathfrak{p} \}, & \text{if}\, I\subsetneq \mathfrak{m} \\
            0, & \text{if}\, I = \mathfrak{m}
    \end{cases}
\]
where $\mathrm{Ass}(I)$ is the set of associated primes of $S/I$ and $\mathfrak{m} = (x_1, \dots, x_n)$ is the maximal ideal of S.Moreover, Rafael H. Villarreal et al. discovered that the v-number not only characterizes the regularity of the GMD function, but also describes the minimum degree of elements that can decompose the ideal $I$. The v-number connects the Hilbert function (which describes the growth behavior of ideals) with the minimum distance function (which describes characteristics of coding). The results of this paper demonstrate that the v-number is both an algebraic invariant and a important parameter in describing the behavior of the minimum distance function.

Dipankar Ghosh extended the concept of the v-number from graded ideals to graded modules over graded Noetherian rings. Furthermore, they interpreted this algebraic invariant as the initial degree of graded modules and proved several results(\cite{fiorindo2024asymptotic}). 

Based on the aforementioned definition of the v-number, Delio Jaramillo and Rafael H. Villarreal provided a method to calculate the v-number of edge ideals of clutter graphs, primarily using stable sets for the calculation(\cite{jaramillo2021v}). Following this method, Prativa Biswas and Mousumi Mandal calculated the v-number of edge ideals for various types of graphs, such as line graphs, cycle graphs, and others(\cite{biswas2024study}).  In this paper, we also adopt the important parameter to calculate the v-number of edge ideals for various classes of graph.
Civan, based on the definition of the v-number for graded ideals, extended the concept to define the v-number for simplicial complexes(\cite{civan2023v}), Using this definition, we calculate the v-number for a class of covering ideals. 

D. Grayson and M. A. Henning provided a method for calculating the regularity, v-number, and dimension of the edge ideal of a graph using Macaulay2. Delio Jaramillo and Rafael H. Villarreal proved that, in certain special cases, $\mathrm{v}(I(G))\leq \mathrm{reg}(S/I(G))$, and they posed an open problem: Is it true that for any square-free monomial ideal $I$, $\mathrm{v}(I)\leq \mathrm{reg}(S/I) + 1$(\cite{jaramillo2021v}). Kamalesh Saha and Indranath Sengupta found several classes of disconnected graphs for which the edge ideal satisfies $\mathrm{v}(I) > \mathrm{reg}(S/I) + 1$(\cite{saha2022v}). Yusuf Civan proved that for any positive integer $k\geq 1$, there exists a connected graph $H_k$ such that $\mathrm{v}(I(H_k)) = \mathrm{reg}(I(H_k)) + k$(\cite{civan2023v}).  In this paper, we prove that for mixed product ideals, $\mathrm{v}(I) \leq \mathrm{reg}(S/I)$.

Antonino Ficarra proposed an open conjecture(\cite{ficarra2023simonconjecturetextvnumbermonomial}).

\noindent
\textbf{Conjecture}:\,Let $I\subset S$ be a monomial ideal with linear powers. Then, for any $k\geq 1$, 
$$\mathrm{v}(I^k) = \alpha(I)k - 1.$$
where $\alpha(I)$ denotes the degree of the monomial with the smallest degree in $I$, and the linear power condition implies that the ideal $I$ and its k-th power have linear resolutions.  In this paper, we prove that this conjecture holds for homogeneous monomial ideals with linear powers, provided that $I^k(k\geq 1)$ contains no embedded associated primes.References \cite{ficarra2023simonconjecturetextvnumbermonomial} \cite{saha2022v} \cite{ficarra2024v} \cite{ficarra2023asymptotic} \cite{biswas2024asymptotic}explore the v-number of powers of various types of ideals, and based on these results, we establish relationships between the v-number of powers of edge ideals for graphs.  

References \cite{sarkar2024v} \cite{saha2024binomialexpansionmathrmvnumber} \cite{fiorindo2024asymptotic} 
consider the v-number of symbolic powers of various types of ideals. Building on this, we derive relationships between the v-numbers of symbolic powers of homogeneous, square-free monomial ideals and calculate the v-number for the symbolic powers of the edge ideal of a complete graph.Inspired by symbolic powers, we also calculate the v-number of several monomial ideals using the concept of square-free powers.

\newpage
\section{Preliminaries}

This chapter introduces some of the definitions and basic results required for the present work. Let \( S = K[x_1, \dots, x_n] = \bigoplus_{d=0}^{\infty} S_d \) be a standard graded polynomial ring over a field \( K \). In particular, in some cases, let \( S_1 = K[x_1, \dots, x_n] = K[\mathbf{x}] \), \( S_2 = K[y_1, \dots, y_m] = K[\mathbf{y}] \), and \( S = K[x_1, \dots, x_n, y_1, \dots, y_m] = K[\mathbf{x}, \mathbf{y}] \) be polynomial rings over the field \( K \).

\begin{Definition}
    (Simple Graph): A simple graph is a graph that does not contain loops (edges that connect a vertex to itself) or multiple edges (more than one edge between two vertices).A simple graph $G$ can be represented by an ordered pair $G = (V,E)$, where $V$ is the set of vertices, representing the points in the graph, and  $E\subseteq\{ \{ u,v \} | u,v\in V, u\neq v \}$ is the set of edges, representing the relationships connecting the vertices.The edges are undirected, meaning the graph is an undirected graph.
    
    This work primarily considers simple undirected graphs.
\end{Definition}

\begin{Definition}
    A graph is said to be connected if there exists a path from any vertex to every other vertex in the graph. 
\end{Definition}

\begin{Definition}
    A hypergraph is a generalization of a graph that allows edges to connect more than two vertices. A hypergraph is represented as $H = (V,E)$, where $V$ is the set of vertices, representing the points in the graph, and $E\subseteq \mathcal{P}(V)\backslash \emptyset$ is the set of hyperedges, where each hyperedge is a subset of vertices. $\mathcal{P}(V)$ is the power set of $\mathcal{P}(V)$ (the set of all subsets) of $V$.
    
    A clutter graph is a special type of hypergraph, usually denoted as $\mathcal{C} = (V(\mathcal{C}),$ $E(\mathcal{C}))$, in which no hyperedge is a subset of another hyperedge. In other words, if $e_1,e_2 \in E(\mathcal{C})$, then $e_1\not\subset e_2$.
\end{Definition}

\begin{Definition}
    A stable set (or independent set) in a clutter graph is denoted by $A$, and satisfies the condition that for every $e\in E(\mathcal{C}),e \not\subset A$. 

    A vertex cover of a clutter graph is denoted by $C$, and satisfies the condition that for every $e\in E(\mathcal{C}), e \cap C \neq \varnothing$. \textbf{Stable sets} and \textbf{vertex covers} are dual concepts, meaning that $C$ is a vertex cover of the clutter graph if and only if $V(\mathcal{C})\backslash C$ is a stable set.   
\end{Definition}

\begin{Definition}
    A minimal vertex cover of a clutter graph is denoted by $C$ and satisfies the condition that for every $v\in C$, the set $C' = C\backslash \{ v \}$ is no longer a vertex cover of the clutter graph. A maximal stable set is denoted by $A$ and satisfies the condition that for every $v\notin A$, the set $A' = A\cup \{ v \}$ is no longer a stable set of the clutter graph.
\end{Definition}

\begin{Definition}
    The neighbor set of a stable set $A$ in a clutter graph, denoted by $N_{\mathcal{C}}(A)$, is the set of all vertices $v$ such that $A\cup \{ v \}$ contains at least one hyperedge of $\mathcal{C}$. 
\end{Definition}

In this paper, we define the set $\mathcal{A}_{\mathcal{C}}$ as the set of stable sets in the clutter graph $\mathcal{C}$ that correspond to minimal vertex covers composed of the neighbor sets of $\mathcal{C}$.

\begin{Definition}
    A matching in a graph $G$ is a set of edges $\{ e_1,\dots, e_s \}\subseteq E(G)$ such that no two edges share a common vertex. The number of edges in the maximum matching of $G$ is called the matching number, denoted by $\mathrm{match}(G)$.
\end{Definition}

\begin{Definition}
Let $G$ be a graph, and let $u$ and $v$ be two vertices (which may be the same). If there exists a vertex sequence $p_0,p_1,\dots, p_{2r+1} (r\geq 1)$ of $G$ that satisfies the following condition. 
\begin{itemize}
    \item[(1)] $p_0 = u, p_{2r+1} = v;$
    \item[(2)] $p_0p_1, p_1p_2, \dots, p_{2r}p_{2r+1}$are edges of the graph $G$;
    \item[(3)] $\forall 1\leq k\leq r-1,\, \exists\, i, \{ p_{2k+1, p_{2k+2}} = e_i \} $;
    \item[(4)] $\forall i, |\{ k: \{ p_{2k+1, p_{2k+2}} \} = e_i \}| \leq | \{ j: e_i = e_j \} |$.
\end{itemize}
Then, $u$ and $v$ are said to be \textbf{even connected} with respect to the product of edges $e_1e_2\dots e_s$ in $G$.
\end{Definition}

\begin{Lemma}[\cite{fakhari2024castelnuovo},Corollary 3.4 ]
\label{H}
Let $G$ be a graph, and let $s\leq \mathrm{math}(G) - 1$ be a positive integer. Suppose $\mu = e_1\dots e_s$ is a monomial in $I(G)^{[s]}$. Then, there exists a simple graph $H$ with the vertex set $V(H) = V(G)\backslash \mathrm{supp}(\mu)$, and $I(H) = (I(G)^{[s+1]}:\mu)$, where $\mathrm{supp}(\mu)$ is the set of variables appearing in the monomial $\mu$. Moreover, any two vertices $x_i,x_j$ in $V(H)$ are connected in $H$ if and only if one of the following conditions holds: 
\begin{itemize}
    \item[(1)] $x_i$ and $x_j$ are connected in $G$;
    \item[(2)] $x_i$ and $x_j$ are even connected in $G$ with respect to $e_1, \dots, e_s$.
\end{itemize}
\end{Lemma}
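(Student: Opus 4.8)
The plan is to follow Banerjee's analysis of colon ideals of powers of edge ideals, but to recast the whole argument in terms of perfect matchings, which is what makes the square-free setting work cleanly. Since $\mu=e_1\cdots e_s$ is a minimal generator of $I(G)^{[s]}$, the edges $e_1,\dots,e_s$ are pairwise disjoint, so $\mu=\prod_{v\in\supp(\mu)}v$ is square-free with $\abs{\supp(\mu)}=2s$. I would first record the elementary description $(I(G)^{[s+1]}:\mu)=\big(\,g/\gcd(g,\mu):g\in G(I(G)^{[s+1]})\,\big)$. Each generator $g$ of $I(G)^{[s+1]}$ is a product of $s+1$ pairwise disjoint edges, so $g/\gcd(g,\mu)$ is square-free and supported on $\supp(g)\setminus\supp(\mu)$; this already forces every generator of the colon to be square-free and to live on $V(G)\setminus\supp(\mu)$, so $H$ will be a simple graph on the correct vertex set. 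Moreover a square-free monomial lies in $I(G)^{[s+1]}$ precisely when its support contains the vertices of an $(s+1)$-matching of $G$; applied to $x_ix_j\mu$ on its $2s+2$ vertices this says $x_ix_j\in(I(G)^{[s+1]}:\mu)$ iff $G[\{x_i,x_j\}\cup\supp(\mu)]$ has a perfect matching. The same count rules out any square $x_i^2$, whose support has only $2s+1$ vertices.

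For the two ``if'' implications I would build the perfect matching by hand. A direct edge $\{x_i,x_j\}\in E(G)$ together with $e_1,\dots,e_s$ is already a perfect matching of $\{x_i,x_j\}\cup\supp(\mu)$. If instead $x_i,x_j$ are even connected along $p_0=x_i,\dots,p_{2r+1}=x_j$, I would pair the vertices using the odd edges $\{p_{2\ell},p_{2\ell+1}\}$ for $0\le\ell\le r$ together with the edges $e_i$ that do not occur as an internal even edge $\{p_{2\ell-1},p_{2\ell}\}$. The key observation is that the internal even edges are distinct members of the matching $e_1,\dots,e_s$, hence pairwise disjoint, forcing $p_1,\dots,p_{2r}$ to be $2r$ distinct vertices of $\supp(\mu)$; since $p_0,p_{2r+1}$ lie outside $\supp(\mu)$, all the $p_\ell$ are distinct and the chosen edges genuinely form a perfect matching.

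The technical core, which underlies both the remaining implication and the reduction to degree two, is an alternating-path argument obtained by superimposing a matching on the reference matching $\{e_1,\dots,e_s\}$. Given any $g\in G(I(G)^{[s+1]})$, I would view $g\cup\{e_1,\dots,e_s\}$ as a graph in which every vertex of $\supp(\mu)$ has degree at most two while the vertices of $\supp(g)\setminus\supp(\mu)$ have degree one; its components are paths and even cycles, and the degree-one vertices are exactly the exterior vertices of $g$ together with the $\supp(\mu)$-vertices missed by $g$. A short parity count on these endpoints shows there is always at least one path joining two exterior vertices $x_i,x_j$, and reading that path off, with $g$-edges in the odd slots and $e$-edges in the even slots, exhibits $x_i,x_j$ as either directly or evenly connected; since $x_ix_j\mid g/\gcd(g,\mu)$, every generator of the colon is a multiple of such a quadratic, so the minimal generators are exactly the quadratics $x_ix_j$ coming from connected or even-connected pairs. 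Specializing the same superimposition to a perfect matching of $\{x_i,x_j\}\cup\supp(\mu)$ gives the converse, that membership of $x_ix_j$ forces connection or even connection.

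I expect the parity bookkeeping in that last paragraph to be the main obstacle: one must track how an arbitrary matching certificate overlaps $e_1,\dots,e_s$, handle $g$-edges that coincide with some $e_i$ (which appear as $2$-cycles and must be discarded from the path), and confirm that the surplus of exterior endpoints over missed interior vertices is exactly two so that a suitable path always exists. The hypothesis $s\le\mathrm{match}(G)-1$ enters precisely to guarantee that $I(G)^{[s+1]}\neq 0$, so that these $(s+1)$-matchings exist and the colon is a genuine, proper edge ideal.
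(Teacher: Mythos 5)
Your proposal is correct, but there is nothing in the paper to measure it against: the paper states this lemma purely as a quotation of \cite{fakhari2024castelnuovo}, Corollary 3.4, and contains no proof of it, so the comparison has to be with the route in the literature rather than with the text. That route descends from Banerjee's theorem on ordinary powers, which describes $(I(G)^{s+1}:e_1\cdots e_s)$ by an inductive analysis of even-connection walks; that analysis must cope with repeated edges $e_i$, non-square-free monomials, and loops (a vertex can be even-connected to itself), and the square-free statement is obtained by adapting or restricting that machinery. Your proof instead exploits square-freeness from the outset and is self-contained: minimal generators of $I(G)^{[k]}$ are $k$-matchings, so membership of a square-free monomial in $I(G)^{[k]}$ is exactly the condition that its support contain a $k$-matching; for a quadratic $x_ix_j$ this becomes a perfect-matching condition on $\{x_i,x_j\}\cup \mathrm{supp}(\mu)$, and the same vertex count kills linear generators and loops, which is why $H$ comes out simple and supported on $V(G)\setminus \mathrm{supp}(\mu)$ with no extra work. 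Your ``if'' directions are explicit matching constructions, and they hinge on the observation (which you state correctly) that condition (4) of even-connectedness with all multiplicities equal to one forces the internal even edges to be \emph{distinct, hence disjoint} members of $\{e_1,\dots,e_s\}$, so every even-connection walk is automatically a genuine path; your ``only if'' direction is the classical alternating-path argument for the union of two matchings, where the surplus $\lvert M_g\rvert-\lvert M_\mu\rvert=1$ (equivalently: exterior degree-one vertices outnumber missed interior ones by exactly two) guarantees a component that is a path joining two exterior vertices, and coincident edges sit in doubled-edge components that can never lie on that path. I checked the parity bookkeeping you flag as the main risk, and it closes exactly as you predict. What your approach buys is an elementary, purely combinatorial proof with no reference to the ordinary-power theory; what it gives up is generality, since Banerjee's analysis also covers ordinary powers, where the perfect-matching dictionary you rely on is unavailable.
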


\begin{Definition}
    Let $I$ be an ideal of $S$, and let  $I = \mathfrak{q}_1\cap \mathfrak{q}_2\cap \dots \cap \mathfrak{q}_n$ be a reduced prime decomposition of $I$, with $\mathfrak{p}_i = \sqrt{\mathfrak{q}_i}, i= 1,\dots, n.$. Then:
 $$\{ \mathfrak{p}_1, \dots, \mathfrak{p}_n \} = \mathrm{Ass}(S/I). $$
\end{Definition}
In this paper, the associated prime ideals of an ideal $\mathrm{Ass}(I)$ refer to the associated primes of $\mathrm{Ass}(S/I)$.

\begin{Definition}
The v-number of a graded ideal $I$ is denoted as $\mathrm{v}(I)$ and is given by:
\[
    \mathrm{v}(I) :=
    \begin{cases}
        \mathrm{min}\{ d\geq 1 \,|\, \exists f\in S_d \,\text{and}\, \mathrm{p}\in \mathrm{Ass}(I) \,\text{such that}\, (I:f) = \mathfrak{p} \}, & \text{if}\, I\subsetneq \mathfrak{m} \\
        0, & \text{if}\, I = \mathfrak{m}
    \end{cases}
\]
where $\mathrm{Ass}(I)$ is the set of associated primes of $S/I$, and $\mathfrak{m} = (x_1, \dots, x_n)$ is the maximal ideal of S. The monomial f corresponding to the v-number of I is any f that satisfies the definition above. 
\end{Definition}
The monomial $f$ corresponding to the v-number of $I$ is any $f$ that satisfies the definition above.

\begin{Lemma}[\cite{jaramillo2021v},Theorem 3.5]
\label{stable}
    Let $I(\mathcal{C})$ be the edge ideal of a clutter graph $\mathcal{C}$. Then, the v-number of $I(\mathcal{C})$ is: 
$$\mathrm{v}(I(\mathcal{C})) = \mathrm{min}\{ |A|: A\in \mathcal{A}_{\mathcal{C}} \}, $$
where $\mathcal{A}_{\mathcal{C}} \}$ denotes the set of stable sets in $\mathcal{C}$.
\end{Lemma}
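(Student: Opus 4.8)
The plan is to reduce the computation of $\mathrm{v}(I(\mathcal{C}))$ to a purely combinatorial statement about colon ideals by monomials. Write $I = I(\mathcal{C})$ and, for a subset $A \subseteq V(\mathcal{C})$, put $x_A := \prod_{i \in A} x_i$; for an edge $E$ write $x_E := \prod_{i\in E}x_i$. Since $I$ is a square-free monomial ideal it is radical, so $\mathrm{Ass}(S/I) = \mathrm{Min}(S/I) = \{\mathfrak{p}_C : C \text{ a minimal vertex cover of } \mathcal{C}\}$, where $\mathfrak{p}_C = (x_i : i \in C)$. The first step is the standard reduction that for a monomial ideal the $\mathrm{v}$-number is attained by a \emph{monomial} witness: if $(I:f) = \mathfrak{p}\in\mathrm{Ass}(S/I)$ for some homogeneous $f$, one replaces $f$ by a suitable monomial of the same degree keeping the colon ideal prime. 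Thus $\mathrm{v}(I) = \min\{\deg f : f \text{ a monomial},\ (I:f) = \mathfrak{p}_C \text{ for some minimal vertex cover } C\}$, and it remains to match these monomial witnesses with the sets of $\mathcal{A}_{\mathcal{C}}$.

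For the inequality $\min\{|A| : A \in \mathcal{A}_{\mathcal{C}}\} \le \mathrm{v}(I)$, I would start from a monomial $f$ of degree $\mathrm{v}(I)$ with $(I:f) = \mathfrak{p}_C$ and first make it square-free: if $x_i^2 \mid f$, then because every generator of $I$ is square-free a direct divisibility check gives $(I : f/x_i) = (I:f) = \mathfrak{p}_C$, so after finitely many steps we may assume $f = x_B$ with $|B| \le \deg f$. Next I would read off the combinatorics of $B$. Since $x_B \notin I$, no edge is contained in $B$, so $B$ is stable; and if some $j \in B \cap C$, then $x_j \in \mathfrak{p}_C=(I:x_B)$ gives $x_j x_B \in I$, forcing an edge inside $\mathrm{supp}(x_jx_B)=B$, a contradiction, so $B \cap C = \emptyset$. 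Finally, unwinding $(I:x_B) = \mathfrak{p}_C$ shows that $x_j \in (I:x_B)$ holds exactly for those $j$ such that some edge is contained in $B \cup \{j\}$, which (as $B$ is stable and $j \notin B$) means precisely $j \in N_{\mathcal{C}}(B)$; comparing with $\mathfrak{p}_C$ yields $N_{\mathcal{C}}(B) = C$. Hence $B \in \mathcal{A}_{\mathcal{C}}$ with $|B|\le \mathrm{v}(I)$, which is the claimed inequality.

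For the reverse inequality $\mathrm{v}(I) \le \min\{|A| : A \in \mathcal{A}_{\mathcal{C}}\}$, I would take any $A \in \mathcal{A}_{\mathcal{C}}$, set $C := N_{\mathcal{C}}(A)$ (a minimal vertex cover by hypothesis), and prove the single identity $(I : x_A) = \mathfrak{p}_C$. The containment $\mathfrak{p}_C \subseteq (I:x_A)$ is immediate: for $j \in C = N_{\mathcal{C}}(A)$ there is an edge $E \subseteq A \cup \{j\}$, whence $x_E \mid x_j x_A$ and $x_j \in (I:x_A)$. For the reverse containment I would identify the minimal generators of $(I:x_A)$: if $u$ is such a generator, then $u x_A \in I$, so some edge $E$ satisfies $x_E \mid u x_A$; since $x_E\mid x_{E\setminus A}\,x_A$, the monomial $x_{E \setminus A}$ lies in $(I:x_A)$ and divides $u$, forcing $u = x_{E \setminus A}$. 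As $A$ is stable, $E \not\subseteq A$, so $E \setminus A \neq \emptyset$; as $C=N_{\mathcal{C}}(A)$ is a vertex cover and $N_{\mathcal{C}}(A) \cap A = \emptyset$ (a vertex of $A$ would need $A$ itself to contain an edge), we get $\emptyset \neq E \cap C \subseteq E \setminus A$, so $u = x_{E\setminus A} \in \mathfrak{p}_C$. Hence $(I:x_A) = \mathfrak{p}_C \in \mathrm{Ass}(S/I)$ and $\mathrm{v}(I) \le |A|$; minimizing over $A \in \mathcal{A}_{\mathcal{C}}$ completes the argument.

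The two delicate points are the reduction to a monomial (and then square-free) witness, where square-freeness of $I$ is exactly what keeps the colon ideal equal to $\mathfrak{p}_C$ after dividing out a repeated variable, and the description of the minimal generators of $(I:x_A)$ as the monomials $x_{E\setminus A}$. I expect the latter to be the crux: one must show that every such generator lies in $\mathfrak{p}_{N_{\mathcal{C}}(A)}$, and this is precisely where the hypothesis that $N_{\mathcal{C}}(A)$ be a (minimal) vertex cover is used, together with the disjointness $N_{\mathcal{C}}(A)\cap A=\emptyset$.
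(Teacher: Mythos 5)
Your proof is correct, but note that the paper itself offers no argument for this statement: Lemma \ref{stable} is quoted without proof from Jaramillo--Villarreal \cite{jaramillo2021v} (their Theorem 3.5), so the only meaningful comparison is with that original source rather than with anything in the paper. What you wrote is essentially a self-contained reconstruction of that argument: the reduction to monomial witnesses, the further reduction to square-free witnesses $x_B$, and the dictionary between square-free monomials $x_B$ with $(I:x_B)$ prime and stable sets $B$ whose neighbor set is a minimal vertex cover are exactly the ingredients of the original proof (there, the inequality $\mathrm{v}(I)\le |A|$ is organized via a decomposition of $(I:x_A)$ into $(N_{\mathcal{C}}(A))$ plus the edge ideal of the induced subclutter on $V\setminus(A\cup N_{\mathcal{C}}(A))$, which is equivalent to your direct determination that every minimal generator of $(I:x_A)$ has the form $x_{E\setminus A}$). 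The one step you leave as ``standard'' --- that for a monomial ideal the v-number is attained by a monomial witness for the same associated prime --- is true and is worth writing out, since it is precisely where primality enters: writing $f=\sum_i\lambda_i m_i$ with monomials $m_i$, one has $x_jf\in I$ iff $x_jm_i\in I$ for every $i$ (because $I$ is a monomial ideal), hence $\mathfrak{p}=(I:f)=\bigcap_i(I:m_i)$; since $\mathfrak{p}$ is prime, is contained in each $(I:m_i)$, and contains the product $\prod_i(I:m_i)$, it equals some $(I:m_i)$ with $\deg m_i=\deg f$. With that two-line lemma inserted, your argument is complete: the square-free reduction (which correctly uses that all generators of $I$ are square-free, so dividing out a repeated variable does not change the colon ideal), the identities $B\cap C=\emptyset$ and $N_{\mathcal{C}}(B)=C$, the automatic disjointness $N_{\mathcal{C}}(A)\cap A=\emptyset$ for stable $A$, and the computation $(I:x_A)=\mathfrak{p}_{N_{\mathcal{C}}(A)}$ all check out against the paper's definitions of $N_{\mathcal{C}}$ and $\mathcal{A}_{\mathcal{C}}$.
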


In this paper, the v-number of the ideal $I(\mathcal{C})$ corresponds to a stable set $A$ , which is any set 
$A$ that satisfies the above definition.
    
\begin{Theorem}[\cite{villarreal2001monomial}, Lemma 6.3.37]
    Let $I = I(\mathcal{C})$ be the edge ideal of a clutter graph $\mathcal{C}$. Then, $\mathfrak{p}\in \mathrm{Ass}(I)$ if and only if $\mathfrak{p} = \langle C \rangle$ is a minimal vertex cover of the clutter graph $\mathcal{C}$.
\end{Theorem}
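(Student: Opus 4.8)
The plan is to reduce the statement to two standard structural facts—that squarefree monomial ideals are radical, and that associated primes of monomial ideals are again monomial primes—and then to translate the containment $I\subseteq\mathfrak{p}$ into a purely combinatorial condition on vertex covers. First I would record that $I=I(\mathcal{C})$ is a squarefree monomial ideal, minimally generated by the squarefree monomials $x_e=\prod_{v\in e}x_v$ with $e\in E(\mathcal{C})$. Such an ideal is radical, so $S/I$ is reduced and therefore has no embedded primes; consequently $\Ass(S/I)=\Min(S/I)$. This single observation is what forces every associated prime to be minimal over $I$, and conversely every minimal prime to be associated, so the theorem reduces to identifying the minimal primes of $I$.

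Next, since $I$ is a monomial ideal, each of its associated (equivalently minimal) primes is a monomial prime $\mathfrak{p}=\langle x_i : i\in C\rangle$ for some $C\subseteq V(\mathcal{C})$. Here the key translation is that $\langle C\rangle$ contains $I$ if and only if every generator $x_e$ lies in $\langle C\rangle$, which happens exactly when $e\cap C\neq\varnothing$ for all $e\in E(\mathcal{C})$, i.e. exactly when $C$ is a vertex cover of $\mathcal{C}$. Passing to inclusion-minimal such $C$ then shows that the minimal primes over $I$ are precisely the $\langle C\rangle$ with $C$ a minimal vertex cover, and combined with $\Ass(S/I)=\Min(S/I)$ this yields both implications.

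To make the membership $\langle C\rangle\in\Ass(S/I)$ explicit, and to connect with the colon-ideal viewpoint used throughout the paper, I would give a direct witness for the ``if'' direction that does not even invoke radicality. For a minimal vertex cover $C$ with complementary stable set $A=V(\mathcal{C})\setminus C$, set $f=\prod_{i\in A}x_i$ and verify $(I:f)=\langle C\rangle$, which is prime and hence puts $\langle C\rangle$ in $\Ass(S/I)$ by definition. The inclusion $\langle C\rangle\subseteq(I:f)$ uses minimality of $C$: for each $v\in C$ the set $C\setminus\{v\}$ fails to cover, so there is an edge $e$ with $e\cap C=\{v\}$ and $e\setminus\{v\}\subseteq A$, whence $x_e\mid x_v f$ and $x_v\in(I:f)$. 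The reverse inclusion uses that $A$ is stable: if a monomial $g$ satisfies $gf\in I$, then some $x_e\mid gf$, and since $e\not\subseteq A$ there is $v\in e\cap C$ with $x_v\nmid f$, forcing $x_v\mid g$ and hence $g\in\langle C\rangle$. For the ``only if'' direction, $\mathfrak{p}=(I:f)\supseteq I$ gives that the underlying $C$ is a cover, and radicality (no embedded primes) upgrades $\mathfrak{p}$ to a minimal prime, so $C$ is a minimal cover.

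I expect the conceptual core to lie not in the combinatorial identification, which is routine once the dictionary is set up, but in cleanly justifying the two invoked facts: the radicality of squarefree monomial ideals (equivalently, that $I$ equals the intersection of the monomial primes containing it, giving its minimal primary decomposition) and the monomial nature of associated primes of monomial ideals. The explicit colon-ideal computation of the last paragraph is the main bookkeeping step, but it is also the part that ties the result directly to the stable-set description of the v-number in Lemma~\ref{stable}, since the witness $f$ is exactly the product over the stable set complementary to $C$.
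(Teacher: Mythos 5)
The paper itself contains no proof of this statement: it is quoted directly from Villarreal's book (\cite{villarreal2001monomial}, Lemma 6.3.37) and used as a black box, so your argument can only be compared with the standard one---which it matches in substance and executes correctly. Your two structural inputs (squarefree monomial ideals are radical, hence $\mathrm{Ass}(S/I)=\mathrm{Min}(S/I)$; associated primes of monomial ideals are generated by variables) are exactly the right reductions, and the dictionary that $\langle C\rangle\supseteq I$ if and only if $C$ is a vertex cover correctly identifies the minimal primes as the $\langle C\rangle$ with $C$ a minimal cover. Your explicit witness computation is also sound: for a minimal cover $C$ with stable complement $A=V(\mathcal{C})\setminus C$ and $f=\prod_{i\in A}x_i$, minimality of $C$ supplies for each $v\in C$ an edge $e$ with $e\cap C=\{v\}$ and $e\setminus\{v\}\subseteq A$, giving $\langle C\rangle\subseteq(I:f)$, while stability of $A$ forces any monomial $g$ with $gf\in I$ to be divisible by some $x_v$, $v\in C$, giving the reverse inclusion; this exhibits $\langle C\rangle$ as an annihilator in $S/I$, hence an associated prime, without even invoking radicality. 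It is worth noting that this witness $f=\prod_{i\in A}x_i$ is precisely the monomial $t_A$ appearing in Lemma~\ref{ta} and underlying the stable-set formula of Lemma~\ref{stable} that the paper uses throughout to compute v-numbers, so your proof not only fills the cited gap but does so in the form most compatible with the paper's framework. The only steps you leave as invoked facts (radicality and the monomial nature of associated primes) are genuinely standard and reasonable to cite.
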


\begin{Definition}[\cite{peeva2010graded}]
The polarization of a monomial $x_i^{a_i}$ is defined as: 
$$x_i^{a_i} (pol) = \prod_{j=1}^{a_i} x_{i,j}, $$
hence $\mathbf{x}^{\alpha} = x_1^{a_1}\dots x_n^{a_n}$ is defined as:
$$\mathbf{x}^{\alpha}(pol) = x_1^{a_1}(pol)\dots x_n^{a_n}(pol).$$
Then, for any monomial ideal $I= \langle \mathbf{x}^{\alpha_1}, \dots, \mathbf{x}^{\alpha_n} \rangle \subseteq S$, the polarization of $I$ is defined as:
$$I(pol)= \langle \mathbf{x}^{\alpha_1}(pol), \dots, \mathbf{x}^{\alpha_n}(pol) \rangle, $$
which is a square-free monomial ideal in the ring $S(pol) = K[x_{i,j} | 1\leq i\leq n, 1\leq j\leq r_i]$, where $r_i$ is the maximum exponent of $x_i$ in the generators of $I$.
\end{Definition}

\begin{Lemma}[\cite{saha2022v}, Corollary 3.5]
\label{polideal}
    For a monomial ideal $I$, we have $\mathrm{v}(I(pol))\leq \mathrm{v}(I)$. Furthermore, if I has no embedded prime ideals, then $\mathrm{v}(I(pol)) = \mathrm{v}(I). $
\end{Lemma}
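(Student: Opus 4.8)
The plan is to compute both invariants combinatorially. Since $I$ and $I(pol)$ are monomial ideals, in the definition of the v-number it suffices to let $f$ range over monomials, and for a monomial $f$ the colon $(I:f)$ is again monomial. For the inequality $\mathrm{v}(I(pol))\le\mathrm{v}(I)$ I would fix a monomial $f=\prod_i x_i^{c_i}$ of degree $\mathrm{v}(I)$ realizing $(I:f)=\mathfrak{p}=(x_i:i\in A)\in\mathrm{Ass}(I)$, and show that its polarization $f(pol)$ realizes an associated prime of $I(pol)$ in the same degree. The heart of the matter is the identity
\[
(I(pol):f(pol))=(x_{i,\,c_i+1}:i\in A).
\]
Its right-hand side is a monomial prime; being equal to a colon ideal it lies in $\mathrm{Ass}(I(pol))$, and since $\deg f(pol)=\deg f$ this gives $\mathrm{v}(I(pol))\le\deg f=\mathrm{v}(I)$.

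To justify the identity I would first record two normalizations. Because $(I:f)=\mathfrak{p}\subsetneq S$ forces $f\notin I$, no generator of $I$ can divide $f$; a one-line divisibility check then shows $c_i\le r_i-1$ for every $i\in A$, so each variable $x_{i,c_i+1}$ exists, and choosing $f$ of minimal degree gives $c_j\le r_j$ for all $j$ (otherwise $f/x_j$ would still realize $\mathfrak{p}$), so $f(pol)$ is defined. The inclusion $\supseteq$ is direct: for $i\in A$ a generator $u\mid x_if$ must have $x_i$-exponent exactly $c_i+1$, whence $u(pol)\mid x_{i,c_i+1}f(pol)$. For $\subseteq$ I would use the depolarization map $\pi\colon x_{k,l}\mapsto x_k$. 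Given a minimal generator $G$ of the left-hand side and a generator $v$ of $I$ with $v(pol)\mid Gf(pol)$, applying $\pi$ yields $v\mid\pi(G)f$, so $\pi(G)\in\mathfrak{p}$; moreover minimality of $G$ forces every variable of $G$ to be genuinely used by $v(pol)$, and tracking the consecutive blocks $x_{k,1}\cdots x_{k,e_k}$ of $v(pol)$ shows that some $i\in A$ satisfies $x_{i,c_i+1}\mid G$. Hence $G$ lies in the claimed prime.

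For the equality under the hypothesis that $I$ has no embedded primes, I would run the correspondence backwards. Now $\mathrm{Ass}(I)=\mathrm{Min}(I)$, and each $\mathfrak{P}=(x_{i,j_i}:i\in A)\in\mathrm{Ass}(I(pol))$ depolarizes to a minimal prime $\mathfrak{p}=(x_i:i\in A)$ of $I$. Taking a monomial $g$ of degree $\mathrm{v}(I(pol))$ with $(I(pol):g)=\mathfrak{P}$ and setting $f=\pi(g)$, so $\deg f=\deg g$, I would prove $(I:f)=\mathfrak{p}$, which yields $\mathrm{v}(I)\le\deg f=\mathrm{v}(I(pol))$. Depolarizing $x_{i,j_i}g\in I(pol)$ gives $\mathfrak{p}\subseteq(I:f)$ immediately. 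For the reverse inclusion, write the $\mathfrak{p}$-primary component $\mathfrak{q}$ of $I$, which involves only the variables $x_i,\ i\in A$; then for a monomial $m\notin\mathfrak{p}$ one has $mf\in I\iff f\in\mathfrak{q}$, so everything reduces to showing $f\notin\mathfrak{q}$, equivalently $f\notin I_{\mathfrak{p}}$.

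The main obstacle is precisely this last step, and it is where the no-embedded-primes hypothesis is indispensable. The difficulty is that the minimal realizer $g$ may use \emph{non-initial} polarization copies $x_{i,j_i}$, so that depolarizing membership need not be reversible: in general $f\in\mathfrak{q}$ does not force $g\in\mathfrak{q}(pol)$ once the copies used by $g$ are not the standard first ones. I would resolve this by localizing at the minimal prime $\mathfrak{p}$: the polarization differences $x_{i,l}-x_{i,1}$ form a regular sequence whose quotient identifies $S(pol)_{\mathfrak{P}}/I(pol)_{\mathfrak{P}}$ with $S_{\mathfrak{p}}/I_{\mathfrak{p}}$, under which $g\mapsto f$; since $(I(pol):g)=\mathfrak{P}$ gives $g\notin I(pol)_{\mathfrak{P}}$, we conclude $f\notin I_{\mathfrak{p}}$ and hence $(I:f)=\mathfrak{p}$. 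This localization is clean exactly because $\mathfrak{p}$ is minimal; when $I$ has embedded primes the separation of components under polarization can realize a prime in strictly smaller degree, leaving only the inequality. Combining the two directions gives $\mathrm{v}(I(pol))=\mathrm{v}(I)$ in the embedded-prime-free case.
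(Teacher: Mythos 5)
The paper never proves this lemma itself; it imports it verbatim from Saha--Sengupta, so your argument can only be assessed on its own merits. Your first half is correct: the normalizations $c_i\le r_i-1$ for $i\in A$ and $c_j\le r_j$ for all $j$ hold for the reasons you give, and the identity $(I(pol):f(pol))=(x_{i,\,c_i+1}:i\in A)$ is true, since the minimal generators of the colon are the monomials $v(pol)/\gcd(v(pol),f(pol))$, which depolarize to $v/\gcd(v,f)\in(I:f)=\mathfrak{p}$, and the initial-segment structure of polarization then forces divisibility by some $x_{i,\,c_i+1}$. So $\mathrm{v}(I(pol))\le\mathrm{v}(I)$ is established.

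The equality half has a genuine gap at exactly the step you call the heart of the matter. The asserted identification $S(pol)_{\mathfrak{P}}/I(pol)_{\mathfrak{P}}\cong S_{\mathfrak{p}}/I_{\mathfrak{p}}$ is false. Already for $I=(x^2)\subset K[x]$, with $I(pol)=(x_{1,1}x_{1,2})$, $\mathfrak{P}=(x_{1,2})$ and $\mathfrak{p}=(x)$: localizing at $\mathfrak{P}$ inverts $x_{1,1}$, so $I(pol)_{\mathfrak{P}}=\mathfrak{P}_{\mathfrak{P}}$ and the left-hand side is the field $K(x_{1,1})$, whereas $K[x]_{(x)}/(x^2)$ has a nonzero nilpotent. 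The point is that the regular sequence $x_{i,l}-x_{i,1}$ does \emph{not} lie in $\mathfrak{P}$, so the polarization theorem cannot be localized at $\mathfrak{P}$: after inverting the copies $x_{i,l}\notin\mathfrak{P}$, collapsing the copies makes $x_i$ itself a unit, which destroys precisely the multiplicity information ``$f\in\mathfrak{q}$ or not'' that you need.

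Worse, this is not just a wrong justification of a true intermediate step: your argument nowhere uses that $\deg(g)=\mathrm{v}(I(pol))$, so if it were valid it would show that \emph{every} monomial $g$ with $(I(pol):g)=\mathfrak{P}$ satisfies $(I:\pi(g))=\mathfrak{p}$. That statement is false even without embedded primes. Take $I=(xy,\,x^2z,\,y^2w)\subset K[x,y,z,w]$, whose primary decomposition is $(x,y)^2\cap(x,w)\cap(y,z)$, so $\mathrm{Ass}(I)$ has no inclusions. Writing $x_1,x_2$ for the copies of $x$, etc., we have $I(pol)=(x_1y_1,\,x_1x_2z_1,\,y_1y_2w_1)$, and the monomial $g=x_2y_2z_1w_1$ satisfies $(I(pol):g)=(x_1,y_1)\in\mathrm{Ass}(I(pol))$, yet $\pi(g)=xyzw\in I$, so $(I:\pi(g))=S\neq\mathfrak{p}$. (Here $\deg g=4>2=\mathrm{v}(I(pol))$, so this is consistent with the lemma itself.) Hence any correct proof of $\mathrm{v}(I)\le\mathrm{v}(I(pol))$ must exploit the global minimality of $g$ in an essential way, e.g.\ to rule out that $\pi(g)$ falls into the $\mathfrak{p}$-primary component of $I$; that is exactly the content your proposal is missing, and it is where the real work in Saha--Sengupta's Corollary 3.5 lies.
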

Here, $I$ has no embedded prime ideals means that there is no inclusion relationship among the associated prime ideals of $I$.

\begin{Lemma}[\cite{saha2022v}, Proposition 3.7]
\label{alpha}
Let I be a monomial ideal, and let $\mathfrak{p} = (x_{s_1,b_{s_1}}, \dots, x_{s_k,b_{s_k}} )$ $\in \mathrm{Ass}(I(pol))$, such that $\mathrm{v}(I(pol)) = \mathrm{v}_{\mathfrak{p}}(I(pol))$, and I has no embedded prime ideals containing $(x_{s_1}, \dots, x_{s_k})$. Then:
$$\mathrm{v}(I) = \mathrm{min}\{ \alpha((I:\mathfrak{p})/I) | \mathfrak{p} \in \mathrm{Ass}(I) \}. $$
\end{Lemma}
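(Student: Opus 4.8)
The plan is to split the asserted equality into its two inequalities and to reduce the nontrivial one to the square-free setting via the polarization. Write $\mathrm{v}_{\mathfrak q}(I)=\min\{\deg f : f\in S \text{ homogeneous}, (I:f)=\mathfrak q\}$, so that $\mathrm{v}(I)=\min_{\mathfrak q\in\mathrm{Ass}(I)}\mathrm{v}_{\mathfrak q}(I)$, and recall that every associated prime of a monomial ideal is generated by a subset of the variables. For \emph{any} $\mathfrak q\in\mathrm{Ass}(I)$ one has $\alpha((I:\mathfrak q)/I)\le \mathrm{v}_{\mathfrak q}(I)$: a monomial $f$ with $(I:f)=\mathfrak q$ satisfies $f\mathfrak q\subseteq I$ and $f\notin I$, hence represents a nonzero homogeneous element of $(I:\mathfrak q)/I$ of the same degree. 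Taking minima gives $\min_{\mathfrak q}\alpha((I:\mathfrak q)/I)\le \mathrm{v}(I)$, which is one half of the statement for free. Using the identification $(I:\mathfrak q)/I\cong \mathrm{Hom}_S(S/\mathfrak q,\,S/I)$, a homogeneous $f$ is nonzero there exactly when $\mathfrak q\subseteq (I:f)\subsetneq S$, and satisfies $(I:f)=\mathfrak q$ exactly when the associated map $S/\mathfrak q\to S/I$ is injective. Thus the real content is the reverse inequality $\mathrm{v}(I)\le \min_{\mathfrak q}\alpha((I:\mathfrak q)/I)$, i.e.\ that for the minimizing prime a minimal-degree element of the colon module can be taken injective.

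For this I would pass to $J:=I(\mathrm{pol})$, which is square-free and therefore well behaved. The prime $\mathfrak p$ realizing $\mathrm{v}(J)=\mathrm{v}_{\mathfrak p}(J)$ admits a square-free witness: there is a square-free monomial $F$ with $\deg F=\mathrm{v}(J)$ and $(J:F)=\mathfrak p$. This is exactly the stable-set normal form underlying Lemma~\ref{stable}, in which $F$ is the product of the variables of a minimal stable set $A$ and $\mathfrak p=\langle N_{\mathcal C}(A)\rangle$; in the square-free world the minimal-degree element of each colon module is injective, so the per-prime equality $\mathrm{v}_{\mathfrak p}(J)=\alpha((J:\mathfrak p)/J)$ holds. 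I would then apply the depolarization homomorphism $\pi\colon S(\mathrm{pol})\to S$, $x_{i,j}\mapsto x_i$, which satisfies $\pi(J)=I$ and $\pi(\mathfrak p)=\bar{\mathfrak p}:=(x_{s_1},\dots,x_{s_k})$, and set $\bar F=\pi(F)$, a monomial of the same degree $\mathrm{v}(J)$.

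The crux is then to show, using the hypothesis that $I$ has no embedded prime containing $(x_{s_1},\dots,x_{s_k})$, that $\bar{\mathfrak p}\in\mathrm{Ass}(I)$ and $(I:\bar F)=\bar{\mathfrak p}$; that is, that depolarization does not enlarge the colon ideal by absorbing an embedded component. Granting this, $\bar F$ witnesses $\mathrm{v}_{\bar{\mathfrak p}}(I)\le \deg\bar F=\mathrm{v}(J)$, while Lemma~\ref{polideal} gives $\mathrm{v}(J)\le \mathrm{v}(I)\le \mathrm{v}_{\bar{\mathfrak p}}(I)$. The chain $\mathrm{v}(J)\le \mathrm{v}(I)\le \mathrm{v}_{\bar{\mathfrak p}}(I)\le \deg\bar F=\mathrm{v}(J)$ collapses, forcing $\mathrm{v}(I)=\mathrm{v}(J)=\mathrm{v}_{\bar{\mathfrak p}}(I)=\deg\bar F$. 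Transporting the square-free injectivity through $\pi$ shows that $\bar F$ also realizes the minimal degree of $(I:\bar{\mathfrak p})/I$, so $\alpha((I:\bar{\mathfrak p})/I)=\mathrm{v}(I)$, and the no-embedded-prime condition simultaneously prevents any other associated prime from producing a strictly smaller $\alpha$; combined with the easy inequality of the first paragraph this yields $\mathrm{v}(I)=\min_{\mathfrak q}\alpha((I:\mathfrak q)/I)$.

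The main obstacle is precisely the displayed key claim: controlling $(I:\bar F)$ under depolarization and verifying that $\bar{\mathfrak p}$ is the minimizer. Passing from the square-free $J$ back to $I$ can a priori create embedded primes sitting above $\bar{\mathfrak p}$, and such primes are exactly what could turn a minimal-degree element of $(I:\bar{\mathfrak p})/I$ non-injective (pushing $\mathrm{v}_{\bar{\mathfrak p}}(I)$ strictly above $\alpha((I:\bar{\mathfrak p})/I)$) or let some other prime $\mathfrak q$ achieve a smaller $\alpha$. I expect this step to rest on a careful comparison of the primary decompositions of $I$ and of $I(\mathrm{pol})$ together with the regular-sequence property of the polarization variables on $S(\mathrm{pol})/J$, which is what ultimately lets the square-free computation be read off on $I$ under the stated hypothesis.
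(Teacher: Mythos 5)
This lemma is quoted in the paper from [\cite{saha2022v}, Proposition 3.7] without proof, so there is no internal argument to compare against; judged on its own, your proposal is a strategy sketch whose two decisive steps are missing, and both are genuinely nontrivial. Your first paragraph is correct: for $\mathfrak q\in\mathrm{Ass}(I)$ any monomial $f$ with $(I:f)=\mathfrak q$ lies in $(I:\mathfrak q)\setminus I$, so $\min_{\mathfrak q}\alpha((I:\mathfrak q)/I)\le\mathrm{v}(I)$. But the step you yourself label ``the crux'' --- that the depolarized witness $\bar F=\pi(F)$ satisfies $\bar F\notin I$ and $(I:\bar F)=\bar{\mathfrak p}$, with $\bar{\mathfrak p}\in\mathrm{Ass}(I)$ --- is exactly the content of the cited proposition, and you do not prove it; you only announce what you expect it to follow from. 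The danger is concrete: depolarization can carry monomials lying outside $I(\mathrm{pol})$ into $I$; for $I=(x^2,xy)$ one has $x_2y_1\notin I(\mathrm{pol})=(x_1x_2,x_1y_1)$ while $\pi(x_2y_1)=xy\in I$. So $\bar F\notin I$ needs a real argument (a careful choice of $F$, or a comparison of the primary decompositions). Note that once $\bar F\notin I$ is secured, the rest of the crux follows quickly and you should have said so: since $I\subseteq\bar{\mathfrak p}\subseteq(I:\bar F)\subsetneq S$, every minimal prime of $(I:\bar F)$ lies in $\mathrm{Ass}(I)$ and contains $\bar{\mathfrak p}$, hence by hypothesis is a minimal prime of $I$, hence equals $\bar{\mathfrak p}$; therefore $(I:\bar F)=\bar{\mathfrak p}$ and $\bar{\mathfrak p}\in\mathrm{Ass}(I)$. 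Your proposal contains neither this reduction nor, more importantly, the missing input $\bar F\notin I$ to it.

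The second gap is the closing claim that ``the no-embedded-prime condition simultaneously prevents any other associated prime from producing a strictly smaller $\alpha$.'' Even granting the crux, your chain only proves $\mathrm{v}(I)=\mathrm{v}(I(\mathrm{pol}))=\alpha((I:\bar{\mathfrak p})/I)$; the stated formula further requires $\alpha((I:\mathfrak q)/I)\ge\mathrm{v}(I)$ for \emph{every} $\mathfrak q\in\mathrm{Ass}(I)$, whereas the hypothesis constrains only associated primes containing $\bar{\mathfrak p}=(x_{s_1},\dots,x_{s_k})$ and says nothing about embedded primes sitting over other minimal primes. Those are precisely the primes that make $\alpha$ drop below $\mathrm{v}$: for $I=(x^2,xyz)$ one has $(I:x)=(x,yz)$, so $x\in(I:(x))\setminus I$ gives $\alpha((I:(x))/I)=1$, while $\mathrm{v}(I)=2$; the drop is caused by the embedded primes $(x,y)$ and $(x,z)$ lying over $(x)$. (That ideal violates the lemma's hypothesis, so it is not a counterexample to the statement, but it refutes the reasoning in your sentence: cleanness over $\bar{\mathfrak p}$ does not by itself control $\alpha$ at the other associated primes.) Closing this gap requires a separate argument --- for instance, showing that a minimal-degree monomial of $(I:\mathfrak q)\setminus I$ polarizes to a monomial of the same degree witnessing an associated prime of $I(\mathrm{pol})$, so that each $\alpha((I:\mathfrak q)/I)$ is bounded below by $\mathrm{v}(I(\mathrm{pol}))=\mathrm{v}(I)$, or else showing that the hypothesis secretly excludes such configurations --- and nothing of this kind appears in the proposal.
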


\begin{Lemma}[\cite{ficarra2023simonconjecturetextvnumbermonomial}]
     Let $I\subset S$ be a graded ideal with no embedded prime ideals. Then:
$$\mathrm{v}(I) = \mathrm{min}\{ \alpha((I:\mathfrak{p})/I) | \mathfrak{p} \in \mathrm{Ass}(I) \}. $$
\end{Lemma}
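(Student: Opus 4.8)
The plan is to establish, for each $\mathfrak{p} \in \Ass(I)$, the per-prime identity $\mathrm{v}_{\mathfrak{p}}(I) = \alpha((I:\mathfrak{p})/I)$, where $\mathrm{v}_{\mathfrak{p}}(I) := \min\{\deg f : f \in S \text{ homogeneous}, (I:f) = \mathfrak{p}\}$, and then to take the minimum over $\mathfrak{p}$. Since $\mathrm{v}(I) = \min_{\mathfrak{p} \in \Ass(I)} \mathrm{v}_{\mathfrak{p}}(I)$ is immediate from the definition of the v-number (and $\mathrm{v}_{\mathfrak{p}}(I)$ is finite for every $\mathfrak{p} \in \Ass(I)$, as $\mathfrak{p}$ being associated means $\mathfrak{p} = (I:g)$ for some homogeneous $g$), this yields the stated formula. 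The argument splits into the two inequalities.

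The inequality $\alpha((I:\mathfrak{p})/I) \le \mathrm{v}_{\mathfrak{p}}(I)$ requires no hypothesis. I would take a homogeneous $f$ realizing $\mathrm{v}_{\mathfrak{p}}(I)$, so $(I:f) = \mathfrak{p}$. Then $\mathfrak{p} f \subseteq I$ gives $f \in (I:\mathfrak{p})$, while $(I:f) = \mathfrak{p} \ne S$ forces $f \notin I$; hence $f$ represents a nonzero homogeneous element of $(I:\mathfrak{p})/I$ of degree $\mathrm{v}_{\mathfrak{p}}(I)$, so $\alpha((I:\mathfrak{p})/I) \le \mathrm{v}_{\mathfrak{p}}(I)$.

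The reverse inequality is the \emph{crux}. I would pick a homogeneous $f \in (I:\mathfrak{p}) \setminus I$ of minimal degree $\alpha((I:\mathfrak{p})/I)$ and prove that in fact $(I:f) = \mathfrak{p}$, which immediately gives $\mathrm{v}_{\mathfrak{p}}(I) \le \deg f = \alpha((I:\mathfrak{p})/I)$. Since $f \mathfrak{p} \subseteq I$ we already have $\mathfrak{p} \subseteq (I:f)$, and $f \notin I$ gives $(I:f) \ne S$, so it remains to prove $(I:f) \subseteq \mathfrak{p}$. The key observation is that multiplication by $f$ induces an injective graded $S$-module homomorphism $S/(I:f) \hookrightarrow S/I$, $\bar g \mapsto \overline{gf}$, whence $\Ass(S/(I:f)) \subseteq \Ass(S/I) = \Ass(I)$. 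Every $\mathfrak{q} \in \Ass(S/(I:f))$ contains $\Ann(S/(I:f)) = (I:f) \supseteq \mathfrak{p}$; but $\mathfrak{q} \in \Ass(I)$ and, by the no-embedded-primes hypothesis, $\mathfrak{q}$ and $\mathfrak{p}$ are both minimal primes of $I$, so $\mathfrak{q} \supseteq \mathfrak{p}$ forces $\mathfrak{q} = \mathfrak{p}$. Thus $\Ass(S/(I:f)) = \{\mathfrak{p}\}$, i.e. $(I:f)$ is $\mathfrak{p}$-primary; in particular $(I:f) \subseteq \sqrt{(I:f)} = \mathfrak{p}$, and combined with $\mathfrak{p} \subseteq (I:f)$ this gives $(I:f) = \mathfrak{p}$.

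The main obstacle is precisely this last step, namely upgrading the containment $\mathfrak{p} \subseteq (I:f)$ to an equality, and it is the only place the hypothesis is used: the injection argument yields $\Ass(S/(I:f)) \subseteq \Ass(I)$ in general, but without the absence of embedded primes one could have an associated prime of $(I:f)$ strictly containing $\mathfrak{p}$, so that $(I:f)$ fails to be $\mathfrak{p}$-primary and the minimal-degree colon ideal need not be prime. Throughout, I would keep careful track of the graded structure (working with homogeneous elements and graded associated primes, all of which are homogeneous) so that the degree of the chosen $f$ genuinely computes $\alpha((I:\mathfrak{p})/I)$; the degenerate case $I = \mathfrak{m}$ is handled separately by the convention $\mathrm{v}(\mathfrak{m}) = 0$.
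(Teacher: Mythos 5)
The paper offers no proof of this statement at all --- it is quoted from the cited reference \cite{ficarra2023simonconjecturetextvnumbermonomial} as a known result --- so there is no in-paper argument to compare against. Your proof is correct and is essentially the standard argument behind the cited result: for any homogeneous $f \in (I:\mathfrak{p}) \setminus I$, the multiplication-by-$f$ injection $S/(I:f) \hookrightarrow S/I$ gives $\Ass(S/(I:f)) \subseteq \Ass(S/I)$, and since every associated prime of $S/(I:f)$ contains $(I:f) \supseteq \mathfrak{p}$ while the no-embedded-primes hypothesis makes all elements of $\Ass(S/I)$ pairwise incomparable, $(I:f)$ is $\mathfrak{p}$-primary and hence equals $\mathfrak{p}$; this is exactly the crux, and you execute it cleanly. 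One caveat worth flagging: your per-prime invariant $\mathrm{v}_{\mathfrak{p}}(I)$ tacitly allows $\deg f = 0$, whereas this paper's definition of $\mathrm{v}(I)$ requires $d \geq 1$; under the paper's convention the stated formula actually fails for a prime ideal $I \subsetneq \mathfrak{m}$ (left side $\geq 1$, right side $= 0$), so the lemma must be read with the degree convention of the cited reference --- which is the one your proof implicitly, and correctly, uses.
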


\begin{Theorem}
\label{additive}
     Let $S_1 = K[\mathbf{x}], S_2 = K[\mathbf{y}]$ be two polynomial rings over a field $K$, and let $S = K[\mathbf{x}, \mathbf{y}]$. If $I, J$ are square-free monomial ideals in $S_1, S_2$ respectively, then: 
$$\mathrm{v}(IS + JS) = \mathrm{v}(I) + \mathrm{v}(J).$$
\end{Theorem}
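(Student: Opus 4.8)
The plan is to reduce everything to the associated-primes description of the v-number provided by the last Lemma above. Write $L := IS + JS$. Since $I$ and $J$ are square-free monomial ideals in disjoint sets of variables, $L$ is again a square-free monomial ideal, so $S/L$ is reduced and $L$ has no embedded associated primes. Hence the formula $\mathrm{v}(L) = \min\{\alpha((L:\mathfrak{P})/L) : \mathfrak{P}\in\mathrm{Ass}(L)\}$ applies, and likewise $\mathrm{v}(I) = \min_{\mathfrak{p}}\alpha((I:\mathfrak{p})/I)$ and $\mathrm{v}(J) = \min_{\mathfrak{q}}\alpha((J:\mathfrak{q})/J)$. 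The strategy is then to (i) identify $\mathrm{Ass}(L)$ and (ii) evaluate each quantity $\alpha((L:\mathfrak{P})/L)$ in terms of the corresponding invariants of $I$ and $J$.

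First I would pin down the associated primes. Writing the minimal prime decompositions $I = \bigcap_i \mathfrak{p}_i$ in $S_1$ and $J = \bigcap_j \mathfrak{q}_j$ in $S_2$, I claim $L = \bigcap_{i,j}(\mathfrak{p}_i S + \mathfrak{q}_j S)$. The inclusion $\subseteq$ is immediate; for $\supseteq$ one tests a monomial $\mathbf{x}^a\mathbf{y}^b$: membership in a monomial prime $\mathfrak{p}_iS+\mathfrak{q}_jS$ means $\mathbf{x}^a\in\mathfrak{p}_i$ or $\mathbf{y}^b\in\mathfrak{q}_j$, and if $\mathbf{x}^a\notin I$ then some $\mathfrak{p}_{i_0}$ misses $\mathbf{x}^a$, forcing $\mathbf{y}^b\in\mathfrak{q}_j$ for all $j$, hence $\mathbf{y}^b\in J$ and the monomial lies in $L$. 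Because the $\mathfrak{p}_i$ and the $\mathfrak{q}_j$ are pairwise incomparable in disjoint variables, the primes $\mathfrak{p}_iS+\mathfrak{q}_jS$ are pairwise incomparable, so this is the irredundant decomposition and $\mathrm{Ass}(L) = \{\mathfrak{p}S+\mathfrak{q}S : \mathfrak{p}\in\mathrm{Ass}(I),\ \mathfrak{q}\in\mathrm{Ass}(J)\}$.

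The core step is to show, for $\mathfrak{P} = \mathfrak{p}S+\mathfrak{q}S$, that $\alpha((L:\mathfrak{P})/L) = \alpha((I:\mathfrak{p})/I) + \alpha((J:\mathfrak{q})/J)$; here I repeatedly use that a monomial lies in $L$ iff its $\mathbf{x}$-part lies in $I$ or its $\mathbf{y}$-part lies in $J$. For the inequality $\leq$, I take minimal-degree witnesses $f_1\in(I:\mathfrak{p})\setminus I$ and $f_2\in(J:\mathfrak{q})\setminus J$ and check that $f_1f_2\in(L:\mathfrak{P})\setminus L$: multiplying by a variable of $\mathfrak{p}$ pushes $f_1$ into $I$, multiplying by a variable of $\mathfrak{q}$ pushes $f_2$ into $J$, while $f_1\notin I$ and $f_2\notin J$ keep $f_1f_2\notin L$. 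For the inequality $\geq$, I take a minimal-degree monomial $f\in(L:\mathfrak{P})\setminus L$, split it as $f = gh$ into its $\mathbf{x}$- and $\mathbf{y}$-parts, and observe that $f\notin L$ forces $g\notin I$ and $h\notin J$ simultaneously; then for each variable $x_i\in\mathfrak{p}$ the relation $(gx_i)h\in L$ together with $h\notin J$ yields $gx_i\in I$, so $g\in(I:\mathfrak{p})\setminus I$ and $\deg g\geq\alpha((I:\mathfrak{p})/I)$, and symmetrically $\deg h\geq\alpha((J:\mathfrak{q})/J)$.

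Finally, taking the minimum over $\mathfrak{P}=\mathfrak{p}S+\mathfrak{q}S$ and using that the minimum of a sum indexed by independent parameters splits as a sum of minima gives $\mathrm{v}(L) = \min_{\mathfrak{p},\mathfrak{q}}\big(\alpha((I:\mathfrak{p})/I)+\alpha((J:\mathfrak{q})/J)\big) = \mathrm{v}(I)+\mathrm{v}(J)$. I expect the main obstacle to be the lower bound in the core step: the clean splitting $f=gh$ and the deduction that $f\notin L$ forces both factors out of their respective ideals rely crucially on the variables being disjoint, and one must carry out the monomial colon computations by hand rather than invoking general (non-monomial) colon identities. The computation of $\mathrm{Ass}(L)$ is routine but must be recorded explicitly, since the governing formula is valid only in the absence of embedded primes.
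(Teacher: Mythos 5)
Your proposal cannot be compared against the paper's own proof for a simple reason: the paper does not prove Theorem \ref{additive}. It is stated in the Preliminaries with no proof and no citation, so what you have written in effect supplies the missing argument. Your proof is correct. The route — reduce to the formula $\mathrm{v}(\cdot)=\min_{\mathfrak{p}}\alpha((\cdot:\mathfrak{p})/\cdot)$ valid for ideals without embedded primes, identify $\mathrm{Ass}(IS+JS)$ via the irredundant decomposition $IS+JS=\bigcap_{i,j}(\mathfrak{p}_iS+\mathfrak{q}_jS)$, and then prove the exact additivity $\alpha((L:\mathfrak{P})/L)=\alpha((I:\mathfrak{p})/I)+\alpha((J:\mathfrak{q})/J)$ — is the same strategy the paper uses, much more sketchily, for the companion Proposition \ref{IJ} on $\mathrm{v}(IJ)$. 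Your version is tighter in precisely the places where the paper's sketch is weakest: you actually check irredundancy of the decomposition (needed both to pin down $\mathrm{Ass}(L)$ and to justify invoking the no-embedded-primes lemma), and both inequalities of the core step are argued monomial-by-monomial from the key observation that a monomial lies in $L$ if and only if its $\mathbf{x}$-part lies in $I$ or its $\mathbf{y}$-part lies in $J$; the splitting of the minimum over the product index set into a sum of minima then finishes it. All of these steps are sound.

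One caveat you should record explicitly. Your proof (and the theorem itself) is correct under the convention implicit in the quoted lemma of Ficarra, where degree-zero witnesses are allowed and hence prime ideals have v-number $0$. Under the paper's own definition (witnesses of degree $\geq 1$, with $\mathrm{v}(I)=0$ only for $I=\mathfrak{m}$), the statement is actually false in the degenerate case where both $I$ and $J$ are prime: for $I=(x_1)\subset K[x_1,x_2]$ and $J=(y_1)\subset K[y_1,y_2]$ one gets $\mathrm{v}(IS+JS)=1$ while $\mathrm{v}(I)+\mathrm{v}(J)=2$. This inconsistency is inherited from the paper, which uses a nonstandard definition alongside a lemma proved under the standard one; it is not a flaw you introduced. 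But since your very first step, $\mathrm{v}(L)=\min_{\mathfrak{P}}\alpha((L:\mathfrak{P})/L)$, is exactly where the two conventions diverge, an airtight write-up should state which convention is in force.
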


Similar conclusions can be drawn as follows.

\begin{Proposition}
\label{IJ}
    Let \( S_1 = K[x_1, x_2, \dots, x_n] \) and \( S_2 = K[y_1, y_2, \dots, y_m] \) be two polynomial rings over a field \( K \). If \( I \) and \( J \) are square-free monomial ideals in \( S_1 \) and \( S_2 \), respectively, and \( S = K[\mathbf{x}, \mathbf{y}] \), then
    \[
    \mathrm{v}(IJ) = \min\{ \alpha(I) + \mathrm{v}(J), \alpha(J) + \mathrm{v}(I) \}.
    \]
\end{Proposition}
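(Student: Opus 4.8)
The plan is to reduce the computation to the v-number formula for graded ideals without embedded primes (the Ficarra-type formula stated just above, namely $\mathrm{v}(L)=\min\{\alpha((L:\mathfrak{p})/L)\mid \mathfrak{p}\in\mathrm{Ass}(L)\}$), and then to evaluate each colon ideal explicitly by exploiting that $I$ and $J$ are supported on the disjoint variable sets $\mathbf{x}$ and $\mathbf{y}$. The starting observation is the monomial description of the product: every monomial $\mu\in S$ factors uniquely as $\mu=\mu_{\mathbf{x}}\mu_{\mathbf{y}}$ with $\mu_{\mathbf{x}}\in K[\mathbf{x}]$ and $\mu_{\mathbf{y}}\in K[\mathbf{y}]$, and because the generators of $IJ$ are the products $ab$ with $a\in G(I)$ an $\mathbf{x}$-monomial and $b\in G(J)$ a $\mathbf{y}$-monomial, one checks that $\mu\in IJ$ if and only if $\mu_{\mathbf{x}}\in I$ and $\mu_{\mathbf{y}}\in J$. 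Since $I$ and $J$ are square-free and their supports are disjoint, this shows $IJ$ is again square-free, so $S/IJ$ is reduced, $\mathrm{Ass}(IJ)=\mathrm{Min}(IJ)$, and in particular $IJ$ has no embedded primes; this is precisely what makes the formula applicable, and it is where the square-free hypothesis is used.

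Next I would identify the associated primes. Using $\sqrt{IJ}=\sqrt{IS}\cap\sqrt{JS}$ together with the fact that a prime generated by $\mathbf{x}$-variables can never contain, nor be contained in, a prime generated by $\mathbf{y}$-variables, one gets $\mathrm{Ass}(IJ)=\{\mathfrak{p}S\mid \mathfrak{p}\in\mathrm{Ass}(I)\}\cup\{\mathfrak{q}S\mid \mathfrak{q}\in\mathrm{Ass}(J)\}$, where each $\mathfrak{p}\in\mathrm{Ass}(I)$ and $\mathfrak{q}\in\mathrm{Ass}(J)$ is a monomial prime in the respective ring. The heart of the argument is then the colon computation for each type of prime. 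Fix $\mathfrak{p}\in\mathrm{Ass}(I)$; applying the monomial description variable by variable shows that a monomial $\mu=\mu_{\mathbf{x}}\mu_{\mathbf{y}}$ lies in $(IJ:\mathfrak{p}S)$ exactly when $\mu_{\mathbf{x}}\in(I:\mathfrak{p})$ and $\mu_{\mathbf{y}}\in J$, so that $(IJ:\mathfrak{p}S)=(I:\mathfrak{p})J$. Such a monomial fails to lie in $IJ$ precisely when $\mu_{\mathbf{x}}\in(I:\mathfrak{p})\setminus I$ (the condition $\mu_{\mathbf{y}}\in J$ being automatic), and since $\deg\mu=\deg\mu_{\mathbf{x}}+\deg\mu_{\mathbf{y}}$ splits across disjoint variables, minimizing the two parts independently yields $\alpha((IJ:\mathfrak{p}S)/IJ)=\alpha((I:\mathfrak{p})/I)+\alpha(J)=\mathrm{v}_{\mathfrak{p}}(I)+\alpha(J)$. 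By the symmetric computation, for $\mathfrak{q}\in\mathrm{Ass}(J)$ one obtains $\alpha((IJ:\mathfrak{q}S)/IJ)=\alpha(I)+\mathrm{v}_{\mathfrak{q}}(J)$. Taking the minimum over all associated primes and using that $\min_{\mathfrak{p}}\mathrm{v}_{\mathfrak{p}}(I)=\mathrm{v}(I)$ and $\min_{\mathfrak{q}}\mathrm{v}_{\mathfrak{q}}(J)=\mathrm{v}(J)$ gives $\mathrm{v}(IJ)=\min\{\mathrm{v}(I)+\alpha(J),\,\alpha(I)+\mathrm{v}(J)\}$, which is the claim. This step mirrors the proof of Theorem~\ref{additive}, with the role of the sum $IS+JS$ replaced by the product.

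The main obstacle is the colon computation, specifically establishing the two identities $(IJ:\mathfrak{p}S)=(I:\mathfrak{p})J$ and its symmetric counterpart, and then verifying that the degree-minimizing monomial genuinely lies outside $IJ$ rather than merely inside the colon; both rest on the disjoint-support factorization and must be argued carefully at the level of monomials. A secondary point requiring care is the separability of degree across the two variable sets, which is what converts the single minimization into the additive form $\mathrm{v}_{\mathfrak{p}}(I)+\alpha(J)$; once the monomial description is in place this is routine, but it is the mechanism that produces the $\alpha$-term. Everything else — the reduction to minimal primes, the absence of embedded primes, and the final minimization — follows formally from the square-free hypothesis and the stated v-number formula.
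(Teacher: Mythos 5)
Your proposal is correct and follows essentially the same route as the paper: decompose $\mathrm{Ass}(IJ)=\mathrm{Ass}(I)\cup\mathrm{Ass}(J)$, compute the colon ideals $(IJ:\mathfrak{p}S)=(I:\mathfrak{p})J$ (and symmetrically $(IJ:\mathfrak{q}S)=(J:\mathfrak{q})I$) via the disjoint-variable factorization, and obtain the v-number by adding the minimal degrees in the two variable sets. Your write-up in fact makes explicit several steps the paper leaves implicit (square-freeness of $IJ$, hence absence of embedded primes, and the monomial-level verification of the colon identities), but the underlying argument is the same.
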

\begin{proof}
    Without loss of generality, let \( \mathfrak{p} \) be a minimal associated prime ideal of \( IJ \) such that \( (IJ : \mathfrak{p}) = f \), and \( \mathrm{deg}(f) = \mathrm{v}(IJ) \). By the definition of associated primes, \( \mathrm{Ass}(IJ) = \mathrm{Ass}(I) \cup \mathrm{Ass}(J) \). Thus, \( \mathfrak{p} \) has the following two cases:

    \textbf{Case 1}: If \( \mathfrak{p} = \mathfrak{p}_1 \in \mathrm{Ass}(I) \), then
    \[
    (IJ : \mathfrak{p}) \setminus IJ = (I : \mathfrak{p}_1) J \setminus IJ,
    \]
    and thus \( \mathrm{v}_{\mathfrak{p}}(IJ) = \mathrm{v}(I) + \alpha(J) \).

    \textbf{Case 2}: If \( \mathfrak{p} = \mathfrak{p}_2 \in \mathrm{Ass}(J) \), then
    \[
    (IJ : \mathfrak{p}) \setminus IJ = (J : \mathfrak{p}_2) I \setminus IJ,
    \]
    and thus \( \mathrm{v}_{\mathfrak{p}}(IJ) = \mathrm{v}(J) + \alpha(I) \).

    In conclusion, we have
    \[
    \mathrm{v}(IJ) = \min\{ \alpha(I) + \mathrm{v}(J), \alpha(J) + \mathrm{v}(I) \}.
    \]
\end{proof}

\begin{Lemma}[\cite{ficarra2023asymptotic}, Corollary 2.9]
\label{inftyprime}
    Let \( S \) be a Noetherian integral domain, \( I \subset R \) be an ideal, and \( \mathfrak{p} \in \mathrm{Ass}^{\infty}(I) \) be a stable prime ideal of \( I \). Then, for any \( k \gg 0 \),
    \[
    (I^k : \mathfrak{p}) = I(I^{k-1} : \mathfrak{p}).
    \]
\end{Lemma}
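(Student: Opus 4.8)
The plan is to package the whole family of colon ideals into a single graded module over the Rees algebra and then extract the identity from finite generation. Concretely, I would set $\mathcal{R}=\mathcal{R}(I)=\bigoplus_{k\geq 0}I^{k}t^{k}\subseteq S[t]$, which is a Noetherian standard graded ring with $\mathcal{R}_{0}=S$ and $\mathcal{R}_{1}=It$, so that $\mathcal{R}_{m}=I^{m}t^{m}=(\mathcal{R}_{1})^{m}$, and form
\[
T=\bigoplus_{k\geq 0}(I^{k}:\mathfrak{p})\,t^{k}\subseteq S[t].
\]
The first routine check is that $T$ is a graded $\mathcal{R}$-module: if $a\in I^{j}$ and $b\in(I^{k}:\mathfrak{p})$ then $ab\mathfrak{p}\subseteq I^{j}I^{k}=I^{j+k}$, so $ab\in(I^{j+k}:\mathfrak{p})$. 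In particular $\mathcal{R}_{1}T_{k-1}\subseteq T_{k}$, which after stripping the $t$-grading reads $I\,(I^{k-1}:\mathfrak{p})\subseteq(I^{k}:\mathfrak{p})$; this is the easy half of the asserted equality and holds for every $k$.

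The crux is to prove that $T$ is a \emph{finitely generated} $\mathcal{R}$-module, and this is the step where the hypothesis that $S$ be a domain is used. Assuming $I\neq 0$, every $\mathfrak{p}\in\mathrm{Ass}^{\infty}(I)$ contains some $I^{k}\neq 0$, hence $\mathfrak{p}\neq 0$, and I may choose a finite generating set $\mathfrak{p}=(g_{1},\dots,g_{r})$ with each $g_{j}\neq 0$. Since $(I^{k}:\mathfrak{p})=\bigcap_{j}(I^{k}:g_{j})$, it suffices to control each $U_{j}=\bigoplus_{k}(I^{k}:g_{j})\,t^{k}$. Multiplication by $g_{j}$ is $\mathcal{R}$-linear and injective on the domain $S[t]$, and it carries $U_{j}$ isomorphically onto $g_{j}U_{j}=\bigoplus_{k}g_{j}(I^{k}:g_{j})\,t^{k}\subseteq\bigoplus_{k}I^{k}t^{k}=\mathcal{R}$. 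As a graded $\mathcal{R}$-submodule of the Noetherian module $\mathcal{R}$, $g_{j}U_{j}$ is finitely generated, hence so is $U_{j}$. Then $T=\bigcap_{j}U_{j}\subseteq U_{1}$ is a submodule of a Noetherian $\mathcal{R}$-module and is therefore finitely generated.

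Finally I would invoke the standard fact that a finitely generated graded module $T$ over a standard graded ring $\mathcal{R}=\mathcal{R}_{0}[\mathcal{R}_{1}]$ satisfies $T_{k}=\mathcal{R}_{1}T_{k-1}$ for all $k$ exceeding the top degree $d$ of a finite homogeneous generating set: for $k>d$, an element of $T_{k}$ is an $\mathcal{R}$-combination of generators $u_{i}$ of degree $e_{i}\leq d$, and each contribution lies in $\mathcal{R}_{k-e_{i}}u_{i}=\mathcal{R}_{1}\mathcal{R}_{k-e_{i}-1}u_{i}\subseteq\mathcal{R}_{1}T_{k-1}$. Translating $T_{k}=\mathcal{R}_{1}T_{k-1}$ back through the grading yields exactly $(I^{k}:\mathfrak{p})=I\,(I^{k-1}:\mathfrak{p})$ for $k\gg 0$, as required.

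I expect the finite-generation step to be the only real obstacle, and the place demanding care: it genuinely relies on $S$ being an integral domain, so that each nonzero $g_{j}$ induces an injective multiplication realizing $U_{j}\cong g_{j}U_{j}\subseteq\mathcal{R}$, after which everything reduces to Noetherianity of $\mathcal{R}$. It is worth noting that this argument never uses the stability of $\mathfrak{p}$; the assumption $\mathfrak{p}\in\mathrm{Ass}^{\infty}(I)$ is what makes the identity relevant to the asymptotic behaviour of $\mathrm{v}(I^{k})$, whereas the colon identity itself holds for any nonzero ideal $\mathfrak{p}$ in a Noetherian domain. (A more hands-on alternative, analysing the $\mathfrak{p}$-primary components in a primary decomposition of $I^{k}$ for large $k$, is possible but messier, so I would prefer the Rees-module route above.)
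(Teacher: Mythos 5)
Your proof is correct. Note first that the paper does not prove this lemma at all: it is quoted verbatim from the cited reference (Ficarra--Sgroi), so there is no internal proof to compare against; but your Rees-algebra argument is essentially the argument used there, so you have in effect reconstructed the source's proof. The one place you work harder than necessary is the finite-generation step: since $\mathfrak{p}\neq 0$, you can pick a single element $0\neq g\in\mathfrak{p}$ and observe that $g\,T\subseteq\mathcal{R}$ directly, because $b\in(I^{k}:\mathfrak{p})$ gives $gb\in I^{k}$ outright; injectivity of multiplication by $g$ on the domain $S[t]$ then exhibits $T\cong gT$ as a submodule of the Noetherian module $\mathcal{R}$, with no need for the generating set $g_{1},\dots,g_{r}$, the modules $U_{j}$, or the intersection $T=\bigcap_{j}U_{j}$. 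Everything else --- the module structure of $T$ over $\mathcal{R}(I)$, the degree argument showing $T_{k}=\mathcal{R}_{1}T_{k-1}$ beyond the top generating degree, and the translation back to $(I^{k}:\mathfrak{p})=I(I^{k-1}:\mathfrak{p})$ --- is exactly right, as is your closing remark that the identity uses only $\mathfrak{p}\neq 0$ and not membership in $\mathrm{Ass}^{\infty}(I)$; the stability hypothesis matters only for the application (it guarantees $\mathfrak{p}\in\mathrm{Ass}(I^{k})$ for all large $k$, which is what the paper needs in Theorem \ref{I^k+2}).
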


\begin{Lemma}[\cite{biswas2024study}, Theorem 3.1]
\label{line}
    Let \( L_n \) be a path graph with \( n \) vertices. Then,
    \[
    \mathrm{v}(I(L_n)) =
    \begin{cases}
    \left\lfloor \frac{n}{4} \right\rfloor, & \text{if } n \equiv 0,1 \ (\mathrm{mod} \ 4) \\
    \left\lfloor \frac{n}{4} \right\rfloor + 1, & \text{if } n \equiv 2,3 \ (\mathrm{mod} \ 4).
    \end{cases}
    \]
\end{Lemma}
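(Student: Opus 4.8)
The plan is to compute $\mathrm{v}(I(L_n))$ directly from the combinatorial description of the v-number of an edge ideal supplied by Lemma~\ref{stable}, namely $\mathrm{v}(I(L_n)) = \min\{|A| : A\in\mathcal{A}_{L_n}\}$, where $A$ ranges over the stable (independent) sets of $L_n$ whose neighbour set $N(A)$ is a minimal vertex cover. Writing $V=\{x_1,\dots,x_n\}$ with edges $\{x_i,x_{i+1}\}$, I would first record the elementary duality that $A\in\mathcal{A}_{L_n}$ holds exactly when $A$ is independent and $C:=N(A)$ is a minimal vertex cover; since $A$ is independent, no vertex of $A$ lies in $N(A)$, so $A\subseteq V\setminus C$, and $C$ is a minimal vertex cover precisely when $V\setminus C$ is a maximal independent set. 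Because every vertex of a path has degree at most $2$, each $a\in A$ satisfies $|N(a)|\le 2$, whence $|C|=|N(A)|\le 2|A|$.

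For the lower bound I would use that $C$, being a vertex cover of $L_n$, has at least $\mathrm{match}(L_n)=\lfloor n/2\rfloor$ vertices, since the minimum vertex cover equals the maximum matching for the bipartite graph $L_n$ by K\"onig's theorem. Combining this with $|C|\le 2|A|$ gives, for every admissible $A$,
\[
|A| \ge \left\lceil \frac{|C|}{2}\right\rceil \ge \left\lceil \frac{\lfloor n/2\rfloor}{2}\right\rceil ,
\]
and a short check of the four residues shows that $\lceil \lfloor n/2\rfloor/2\rceil$ equals $\lfloor n/4\rfloor$ when $n\equiv 0,1\ (\mathrm{mod}\ 4)$ and $\lfloor n/4\rfloor+1$ when $n\equiv 2,3\ (\mathrm{mod}\ 4)$. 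This already yields the stated lower bound uniformly, with no case analysis.

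For the matching upper bound I would exhibit, in each residue class modulo $4$, an explicit stable set $A$ realizing this count. When $n\equiv 0,1\ (\mathrm{mod}\ 4)$ I take $C$ to be the even-indexed vertices (a minimal vertex cover, since its complement, the odd-indexed vertices, is a maximal independent set) and place $A=\{x_3,x_7,x_{11},\dots\}$ at the positions congruent to $3$ modulo $4$; each $x_{4k-1}$ has $N(x_{4k-1})=\{x_{4k-2},x_{4k}\}$, so these neighbourhoods tile the even vertices two at a time, giving $N(A)=C$ with $|A|=\lfloor n/4\rfloor$. When $n\equiv 2,3\ (\mathrm{mod}\ 4)$ the minimum vertex cover has odd size, so one extra generator is unavoidable: I would take the analogous step-$4$ family adapted to a minimal vertex cover $C$ (the odd-indexed vertices when $n\equiv 2$, the even-indexed vertices when $n\equiv 3$), together with the endpoint $x_n$, which dominates the single leftover vertex of $C$, giving $|A|=\lfloor n/4\rfloor+1$. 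In each case one verifies $N(A)=C$ exactly: the inclusion $N(A)\subseteq C$ is automatic because $A$ lies in the independent set $V\setminus C$, while $N(A)\supseteq C$ is precisely the statement that $A$ dominates $C$, which the tiling guarantees.

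The routine but genuinely fiddly part is the upper-bound bookkeeping at the two endpoints: the degree-$1$ vertices $x_1$ and $x_n$ dominate only one neighbour, so the tiling of the chosen vertex cover by the neighbourhoods $N(a)$ breaks down exactly at the boundary, and it is there that the parity of $n$ forces the $+1$ in the cases $n\equiv 2,3$. I expect the main obstacle to be organizing these four constructions so that the chosen $C$ is genuinely a minimal vertex cover and $A$ dominates all of $C$ with no wasted vertex; the lower bound, by contrast, is immediate from the degree bound together with K\"onig's theorem.
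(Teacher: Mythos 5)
Your proposal is correct, but note that the paper itself gives no proof of this lemma at all: it is imported verbatim from \cite{biswas2024study} (Theorem 3.1), so the only internal point of comparison is the paper's proof of the analogous statement for $L_n^2$ (Theorem \ref{linetwo}), which proceeds by induction on $n$ with an edge-counting estimate. Your argument uses the same engine (the Jaramillo--Villarreal characterization, Lemma \ref{stable}) but replaces the induction by a one-shot global bound, and this is a genuine simplification: for any $A\in\mathcal{A}_{L_n}$ you get $2|A|\ge |N(A)|\ge \lfloor n/2\rfloor$ since each path vertex has degree at most $2$ and $N(A)$ is a vertex cover, and the residue check $\lceil\lfloor n/2\rfloor/2\rceil=\lfloor n/4\rfloor$ or $\lfloor n/4\rfloor+1$ is exactly as you state. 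Two small remarks. First, K\"onig's theorem is overkill: you only need the trivial inequality that any vertex cover has size at least the matching number $\lfloor n/2\rfloor$, not the equality. Second, your upper-bound constructions are described somewhat loosely (``the analogous step-$4$ family adapted to \dots''), but they do pin down correct sets: for $n=4k, 4k+1$ take $A=\{x_3,x_7,\dots,x_{4k-1}\}$ with $N(A)$ the even vertices; for $n=4k+2$ take $A=\{x_2,x_6,\dots,x_{4k-2},x_{4k+2}\}$ with $N(A)$ the odd vertices; for $n=4k+3$ take $A=\{x_3,x_7,\dots,x_{4k-1},x_{4k+3}\}$ with $N(A)$ the even vertices. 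In each case $A$ is independent, $N(A)$ equals the chosen parity class, and that class is a minimal vertex cover because its complement is a maximal independent set, so the counts $\lfloor n/4\rfloor$ and $\lfloor n/4\rfloor+1$ are achieved. The proof is complete and, if anything, cleaner than the inductive style used elsewhere in the paper.
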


In this paper, the notation \( [x] \) (where \( x \in \mathbb{R} \)) denotes the smallest integer greater than or equal to \( x \).

\begin{Definition}
    Let \( I \subset S \) be a graded ideal, and let \( \textbf{F} \) be a minimal graded free resolution of \( S/I \) as an \( S \)-module:
    \[
    \textbf{F}:\; 0 \rightarrow \bigoplus_j S(-j)^{b_{g,j}} \rightarrow \dots \rightarrow \bigoplus_j S(-j)^{b_{1,j}} \rightarrow S \rightarrow S/I \rightarrow 0.
    \]
    Then, the Castelnuovo-Mumford regularity of \( S/I \) is defined as
    \[
    \mathrm{reg}(S/I) = \max\{ j - i \mid b_{i,j} \neq 0 \}.
    \]
\end{Definition}
    
\begin{Definition}
    Let \( S_1 = K[\mathbf{x}] \), \( S_2 = K[\mathbf{y}] \), and \( S = K[\mathbf{x}, \mathbf{y}] \) be polynomial rings over a field \( K \). A mixed product ideal is defined as
    \[
    \sum_{i=1}^{s} I_{q_i} J_{r_i}, \quad q_i, r_i \in \mathbb{N}^*,
    \]
    where \( I_{q_i} \) (resp. \( J_{r_i} \)) is the square-free monomial ideal generated by all variables in \( \mathbf{x} \) (resp. \( \mathbf{y} \)) of degree \( q_i \) (resp. \( r_i \)), and satisfies \( 1 \leq q_1 < q_2 < \dots < q_s \leq n \) and \( 1 \leq r_s < r_{s-1} < \dots < r_1 \leq m \).
\end{Definition}

\begin{Lemma}[\cite{rinaldo2012sequentially}, Corollary 3.3]
\label{mixedprime}
    Let \( S = K[\mathbf{x}, \mathbf{y}] \), and define \( \mathcal{X}_i = \{ X \subset \{ \mathbf{x} \} : |X| = i \} \), \( \mathcal{Y}_j = \{ Y \subset \{ \mathbf{y} \} : |Y| = j \} \). If \( i > n \) (resp. \( j > m \)), then \( \mathcal{X}_i = \varnothing \) (resp. \( \mathcal{Y}_j = \varnothing \)). Then,
    \[
    \sum_{i=1}^{s} I_{q_i} J_{r_i} = \mathcal{P}_x \cap \mathcal{P}_{xy} \cap \mathcal{P}_y,
    \]
    where
    \[
    \mathcal{P}_x = \bigcap_{X \in \mathcal{X}_{n-q_1+1}} (X), \quad \mathcal{P}_y = \bigcap_{Y \in \mathcal{Y}_{m-r_s+1}} (Y),
    \]
    and
    \[
    \mathcal{P}_{xy} = \bigcap_{i=1}^{s-1} \left( \bigcap_{X, Y} ((X) + (Y)) \right), \quad X \in \mathcal{X}_{n-q_i+1}, Y \in \mathcal{Y}_{m-r_i+1}.
    \]
\end{Lemma}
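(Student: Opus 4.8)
The plan is to use that $L:=\sum_{i=1}^{s}I_{q_i}J_{r_i}$ is a square-free monomial ideal, hence radical, so it coincides with the intersection of its minimal primes; moreover its minimal monomial generators (the products $x_Ay_B$ with $|A|=q_i$, $|B|=r_i$) are pairwise incomparable under divisibility, so $L=I(\mathcal C)$ is the edge ideal of a clutter $\mathcal C$. By the quoted theorem of Villarreal, $\mathrm{Ass}(L)=\{\langle C\rangle: C \text{ a minimal vertex cover of }\mathcal C\}$, and by radicality $L=\bigcap_{C}\langle C\rangle$. Thus the whole problem reduces to identifying the minimal vertex covers $C\subseteq\{\mathbf x\}\cup\{\mathbf y\}$ and matching them with $\mathcal P_x$, $\mathcal P_{xy}$, $\mathcal P_y$.

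First I would establish a containment criterion. For a monomial prime $\mathfrak p=(Z)$ write $Z_x=Z\cap\{\mathbf x\}$, $Z_y=Z\cap\{\mathbf y\}$ and $a=|Z_x|$, $b=|Z_y|$. A generator $x_Ay_B$ lies in $(Z)$ exactly when $Z$ meets $A\cup B$, and one can choose $A,B$ (of sizes $q_i,r_i$) disjoint from $Z$ precisely when $n-a\ge q_i$ and $m-b\ge r_i$. Hence $(Z)\supseteq I_{q_i}J_{r_i}$ iff $a\ge n-q_i+1$ or $b\ge m-r_i+1$, and therefore $(Z)\supseteq L$ iff
\[ \text{for every } i:\quad a\ge\alpha_i \ \text{ or }\ b\ge\beta_i, \qquad \alpha_i:=n-q_i+1,\ \ \beta_i:=m-r_i+1. \]

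The combinatorial core is to determine the componentwise-minimal feasible pairs $(a,b)$ of this system. The hypotheses $q_1<\cdots<q_s$ and $r_1>\cdots>r_s$ give $\alpha_1>\cdots>\alpha_s$ and $\beta_1<\cdots<\beta_s$, so the infeasible region is the union of the diagonally nested rectangles $[0,\alpha_i)\times[0,\beta_i)$ and the feasible region is the complementary up-set. Reading off the lower-left corners of its staircase boundary, I would show the Pareto-minimal pairs are $(\alpha_1,0)$, the mixed corners $(\alpha_{i+1},\beta_i)$ for $i=1,\dots,s-1$, and $(0,\beta_s)$. Translating cardinalities back into variable sets, the minimal vertex covers are: all $X$ with $|X|=n-q_1+1$ (giving $\mathcal P_x$), all $Y$ with $|Y|=m-r_s+1$ (giving $\mathcal P_y$), and the mixed covers $X\cup Y$ at the staircase corners (giving $\mathcal P_{xy}$); intersecting the corresponding primes yields $L=\mathcal P_x\cap\mathcal P_{xy}\cap\mathcal P_y$.

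I expect the principal obstacle to be exactly this last translation: correctly pairing the sizes of $X$ and $Y$ in the mixed components and checking that the list of corners is neither short nor redundant. This is where the two monotone sequences $\alpha_i$ (decreasing) and $\beta_i$ (increasing) interleave, and the staircase analysis places the mixed corner for index $i$ at $|X|=n-q_{i+1}+1$ paired with $|Y|=m-r_i+1$, so aligning this with the indexing in the statement requires care. I would verify the endpoints $i=1$ and $i=s-1$ by hand, confirm via the containment criterion that each listed $(X)+(Y)$ contains $L$ while deleting any variable destroys containment (so none is redundant), and run the smallest case $s=2$ (say $n=m=3$, $(q_1,q_2)=(1,2)$, $(r_1,r_2)=(2,1)$) as a sanity check that the mixed corner genuinely occurs and that omitting it would make the right-hand side strictly larger than $L$.
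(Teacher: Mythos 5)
You should note at the outset that the paper contains no proof of this statement: Lemma \ref{mixedprime} is imported verbatim from the cited reference as a preliminary, so there is no internal proof to compare yours against; it can only be judged on its own merits and against the way the paper actually uses the decomposition later. On those terms your proposal is correct. Squarefreeness gives radicality, so $L=\sum_{i=1}^{s}I_{q_i}J_{r_i}$ equals the intersection of its minimal monomial primes (the detour through clutters and Villarreal's vertex-cover theorem is harmless but unnecessary for this). Your containment criterion $(Z)\supseteq I_{q_i}J_{r_i}$ iff $a\ge n-q_i+1$ or $b\ge m-r_i+1$ is right, and the reduction of minimality of a prime $(Z)$ to Pareto-minimality of the pair $(a,b)$ is legitimate because feasibility depends only on the cardinalities and one may shrink $Z$ blockwise without losing feasibility; the staircase corners $(\alpha_1,0)$, $(\alpha_{i+1},\beta_i)$ for $1\le i\le s-1$, and $(0,\beta_s)$ are exactly the minimal ones.

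The indexing issue you flagged as the principal obstacle is real, and your resolution of it is the correct one; in fact it exposes a typo in the statement as transcribed in this paper. As printed, the lemma pairs $X\in\mathcal{X}_{n-q_i+1}$ with $Y\in\mathcal{Y}_{m-r_i+1}$. With that literal reading each mixed component for $i=1$ satisfies $(X)+(Y)\supseteq(X)\supseteq\mathcal{P}_x$, so it is redundant, and the asserted equality fails: in your own test case $n=m=3$, $(q_1,q_2)=(1,2)$, $(r_1,r_2)=(2,1)$, the right-hand side collapses to $\mathcal{P}_x\cap\mathcal{P}_y=(x_1,x_2,x_3)\cap(y_1,y_2,y_3)$, which contains $x_1y_1$, while $L$ contains no monomial of degree below $q_1+r_1=3$. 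Your corner $(\alpha_{i+1},\beta_i)$, i.e. $|X|=n-q_{i+1}+1$ paired with $|Y|=m-r_i+1$, is precisely the indexing the paper itself uses when it applies the decomposition in the proof of Theorem \ref{mixed}, parts (4) and (6). So your proposal both supplies a proof where the paper gives none and corrects the transcription of the statement it proves.
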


\begin{Definition}
    Let \( S \) be a standard graded polynomial ring, and let \( I \subset S \) be a monomial ideal. The square-free powers of \( I \), denoted by \( I^{[k]} \), are defined as the ideal generated by all square-free monomials in \( I^k \).
\end{Definition}
    
By the definition of square-free powers, for the edge ideal \( I(G) \) of a graph \( G \), the degree \( k \) must satisfy
\[
1 \leq k \leq \mathrm{match}(G),
\]
where \( \mathrm{match}(G) \) is the matching number of \( G \).

\begin{Lemma}[\cite{crupi2023matchings}, Proposition 2.3]
\label{forest}
    Let \( G \) be a forest graph with \( n \) vertices, \( G_1 \) be the induced subgraph on the vertex set \( [n-1] \), and \( G_2 \) be the induced subgraph on the vertex set \( [n-2] \). Then, for any \( 1 \leq k \leq \mathrm{match}(G) \),
    \[
    I(G)^{[k]} = I(G_1)^{[k]} + x_{n-1} x_n I(G_2)^{[k-1]}.
    \]
\end{Lemma}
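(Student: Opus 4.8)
The plan is to establish the identity by comparing monomial generating sets, using the combinatorial description of the square-free powers of an edge ideal. Recall that $I(G)^{[k]}$ is minimally generated by the square-free monomials $x_M := \prod_{e \in M}\prod_{v \in e} x_v$, where $M$ ranges over the $k$-matchings of $G$ (the $k$ edges of $M$ being pairwise disjoint is exactly what makes the product square-free). Since $G$ is a forest with $\mathrm{match}(G) \geq k \geq 1$, it has at least one leaf, and I would use the convention (under which the statement is made) that the vertices are labelled so that $n$ is a leaf whose unique neighbour is $n-1$. Consequently $E(G_1) = E(G)\setminus\{\{n-1,n\}\}$, and $E(G_2)$ consists of the edges of $G$ incident to neither $n-1$ nor $n$. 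As both sides of the claimed equation are monomial ideals, it suffices to show that each generator of one side lies in the other.

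For the inclusion $\supseteq$, first note that every $k$-matching of $G_1$ is a $k$-matching of $G$, since $E(G_1)\subseteq E(G)$; hence each generator of $I(G_1)^{[k]}$ is a generator of $I(G)^{[k]}$, giving $I(G_1)^{[k]}\subseteq I(G)^{[k]}$. Next, a generator of $x_{n-1}x_n I(G_2)^{[k-1]}$ has the form $x_{n-1}x_n\, x_{M'}$ with $M'$ a $(k-1)$-matching of $G_2$. The edges of $M'$ lie in $[n-2]$ and are therefore disjoint from the edge $\{n-1,n\}$ of $G$, so $M'\cup\{\{n-1,n\}\}$ is a $k$-matching of $G$ and $x_{n-1}x_n\,x_{M'}=x_{M'\cup\{\{n-1,n\}\}}$ is a generator of $I(G)^{[k]}$. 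This yields $x_{n-1}x_n I(G_2)^{[k-1]}\subseteq I(G)^{[k]}$, and hence $\supseteq$.

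For the reverse inclusion $\subseteq$, take a generator $x_M$ of $I(G)^{[k]}$ with $M$ a $k$-matching of $G$, and split on whether the variable $x_n$ divides $x_M$. If $x_n\nmid x_M$, then no edge of $M$ meets $n$, so every edge of $M$ has both endpoints in $[n-1]$; thus $M$ is a $k$-matching of $G_1$ and $x_M$ generates $I(G_1)^{[k]}$. If $x_n\mid x_M$, then some edge of $M$ is incident to $n$; as $n$ is a leaf adjacent only to $n-1$, that edge must be $\{n-1,n\}$. Removing it leaves a $(k-1)$-matching $M'$ whose edges avoid $n-1$ and $n$, hence a $(k-1)$-matching of $G_2$, and $x_M = x_{n-1}x_n\,x_{M'}\in x_{n-1}x_n I(G_2)^{[k-1]}$. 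In either case $x_M$ lies in the right-hand side, completing $\subseteq$ and the proof.

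I would handle the boundary case $k=1$ by reading $I(G_2)^{[0]}$ as the unit ideal $S$ (the empty matching), so that the formula reduces to $I(G) = I(G_1) + (x_{n-1}x_n)$, which simply records that the edges of $G$ are those of $G_1$ together with the single edge at the leaf $n$. The argument is otherwise a clean bijection on generators, so I do not expect a serious technical obstacle; the one point requiring care, and the place where the forest hypothesis is genuinely used, is the leaf convention, since if $n$ were adjacent to a vertex other than $n-1$ a matching could cover $n$ through a different edge, and such generators would fail to be captured by the term $x_{n-1}x_n I(G_2)^{[k-1]}$.
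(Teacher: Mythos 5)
Your proof is correct. Note, however, that the paper does not prove this lemma at all: it is quoted verbatim as a known result (\cite{crupi2023matchings}, Proposition 2.3), so there is no in-paper argument to compare yours against. Your generator-by-generator matching decomposition is the natural (and essentially the standard) argument for this fact: the identification of the generators of $I(G)^{[k]}$ with monomials $x_M$ of $k$-matchings is sound, both inclusions are verified cleanly, and you correctly flag the one genuinely delicate point --- the statement is only true under the labelling convention that $n$ is a leaf whose unique neighbour is $n-1$, a hypothesis the paper leaves implicit (without it the decomposition fails, as a matching could cover $n$ by a different edge). Your handling of the boundary case $k=1$ via $I(G_2)^{[0]}=S$ is also the right reading.
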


\newpage

\section{The $\mathrm{v}$-number of Mixed Product Ideals}
Based on the structure of the associated prime ideals of mixed product ideals introduced in Lemma \ref{mixedprime}, this chapter calculates the $\mathrm{v}$-$\mathrm{number}$ of mixed product ideals and further analyzes its relationship with the regularity. 

\begin{Theorem}
\label{mixed}
Let the mixed product ideal $I_{q_i}$(respectively, $J_{r_j}$) ($1\leq i,j\leq s\in \mathbb{N}^*$) be the square-free monomial ideal generated by all variables of degree $q_i$(respectively, $r_j$) in the polynomial ring $S_1=K[\textbf{x}]$(respectively, $S_2=K[\textbf{y}]$) over the field $K$, and let $S=K[\textbf{x}, \textbf{y}]$. Here, $1\leq q_1 < q_2 <\dots < q_s\leq n$ and $1\leq r_s < r_{s-1} <\dots < r_1\leq m$. In particular, $1\leq q < s\leq n $ and $1\leq t < r\leq m $. Then, 
\begin{itemize}
    \item[(1)] $\mathrm{v}(I_qS) = q-1;$
    \item[(2)] $\mathrm{v}(I_qJ_rS) = q+r-1;$
    \item[(3)] $\mathrm{v}((I_q + J_t)S) = q+t-2;$
    \item[(4)] $\mathrm{v}((I_qJ_r +I_s)S) = q+r-1;$
    \item[(5)] $\mathrm{v}((I_qJ_r +I_sJ_t)S) = \mathrm{min}\{ q+r-1 , s+t-1 \};$
    \item[(6)] $\mathrm{v}((\sum_{i=1}^{s} I_{q_i}J_{r_i})S) = \mathrm{min}\{ q_1+r_1-1, q_s+r_s-1, q_{i+1}+r_{i}-2(2\leq i\leq s-2) \}.$
\end{itemize}
\end{Theorem}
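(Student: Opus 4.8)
The plan is to treat all six ideals uniformly, exploiting that each is a squarefree monomial ideal, hence radical, hence has no embedded associated primes. Thus $\mathrm{Ass}(I)=\mathrm{Min}(I)$ and the formula $\mathrm{v}(I)=\min\{\alpha((I:\mathfrak{p})/I)\mid \mathfrak{p}\in\mathrm{Ass}(I)\}$ of \cite{ficarra2023simonconjecturetextvnumbermonomial} (cf. Lemma \ref{alpha}) applies throughout. For each part the scheme is: (i) list the minimal monomial primes $\mathfrak{p}=(X)+(Y)$ with $X\subseteq\mathbf{x}$, $Y\subseteq\mathbf{y}$, read off from Lemma \ref{mixedprime} (or determined directly as the minimal vertex covers of the generating clutter); (ii) for each $\mathfrak{p}$ exhibit an explicit squarefree monomial $f\in(I:\mathfrak{p})\setminus I$ of the predicted degree, giving an upper bound on $\alpha((I:\mathfrak{p})/I)$; and (iii) prove the matching lower bound. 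Step (iii) rests on one clean observation: if $(I:f)=\mathfrak{p}$ then no variable of $\mathfrak{p}$ divides $f$, since otherwise $zf$ and $f$ share the same support and $zf\in I\iff f\in I$ contradicts $f\notin I$. Hence $\supp_{\mathbf{x}}(f)\subseteq\mathbf{x}\setminus X$ and $\supp_{\mathbf{y}}(f)\subseteq\mathbf{y}\setminus Y$, capping the support sizes from above, while $x_af\in I$ (for $x_a\in X$), $y_bf\in I$ (for $y_b\in Y$) and $f\notin I$ cap them from below.

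Parts (1)--(3) are quick. For (1), the minimal primes of $I_q$ are the $(X)$ with $\lvert X\rvert=n-q+1$; taking $f=\prod_{x\in\mathbf{x}\setminus X}x$ of degree $q-1$ gives $x_af\in I_q$ for every $x_a\in X$ and $f\notin I_q$, and the support observation forces $\lvert\supp(f)\rvert=q-1$, so $\mathrm{v}(I_q)=q-1$. Part (2) is immediate from Proposition \ref{IJ} with $\alpha(I_q)=q$, $\alpha(J_r)=r$, $\mathrm{v}(I_q)=q-1$, $\mathrm{v}(J_r)=r-1$, yielding $\min\{q+(r-1),\,r+(q-1)\}=q+r-1$. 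Part (3) is immediate from Theorem \ref{additive}, giving $(q-1)+(t-1)=q+t-2$.

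For (4)--(6) I run the scheme on $I=\sum_{i=1}^s I_{q_i}J_{r_i}$, reading off (4) and (5) as the two-term case $s=2$ (with a formal $J_0=S_2$ for (4)). The minimal primes fall into three families: pure-$\mathbf{x}$ primes $(X)$ with $\lvert X\rvert=n-q_1+1$; pure-$\mathbf{y}$ primes $(Y)$ with $\lvert Y\rvert=m-r_s+1$; and mixed primes $(X)+(Y)$ indexed by $j=1,\dots,s-1$ with $\lvert X\rvert=n-q_{j+1}+1$ and $\lvert Y\rvert=m-r_j+1$. For a pure-$\mathbf{x}$ prime take $f=\bigl(\prod_{x\in\mathbf{x}\setminus X}x\bigr)y_1\cdots y_{r_1}$ of degree $(q_1-1)+r_1=q_1+r_1-1$, so that each $x_af\in I_{q_1}J_{r_1}$ while $f\notin I$; symmetrically the pure-$\mathbf{y}$ primes give $q_s+r_s-1$. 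For the mixed prime of index $j$, where $\lvert\mathbf{x}\setminus X\rvert=q_{j+1}-1$ and $\lvert\mathbf{y}\setminus Y\rvert=r_j-1$, take $f=\bigl(\prod_{x\in\mathbf{x}\setminus X}x\bigr)\bigl(\prod_{y\in\mathbf{y}\setminus Y}y\bigr)$ of degree $q_{j+1}+r_j-2$; then $x_af\in I_{q_{j+1}}J_{r_{j+1}}$ (using $r_j-1\ge r_{j+1}$) and $y_bf\in I_{q_j}J_{r_j}$ (using $q_{j+1}-1\ge q_j$), while $f\notin I$. Finally I simplify the resulting minimum: the boundary mixed terms are dominated, since $q_2+r_1-2\ge q_1+r_1-1$ (as $q_2>q_1$) and $q_s+r_{s-1}-2\ge q_s+r_s-1$ (as $r_{s-1}>r_s$), so $j=1$ and $j=s-1$ drop out, leaving exactly $\min\{q_1+r_1-1,\ q_s+r_s-1,\ q_{i+1}+r_i-2\ (2\le i\le s-2)\}$; specializing $s=2$ recovers (4) and (5) after the same domination step.

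I expect the main obstacle to be the lower-bound bookkeeping for the mixed primes in (6). For an arbitrary monomial $f\in(I:\mathfrak{p})\setminus I$ one must show that the constraints coming from $x_af\in I$ (there is $l$ with $\lvert\supp_{\mathbf{x}}(f)\rvert+1\ge q_l$ and $\lvert\supp_{\mathbf{y}}(f)\rvert\ge r_l$), from $y_bf\in I$ (there is $l'$ with $\lvert\supp_{\mathbf{x}}(f)\rvert\ge q_{l'}$ and $\lvert\supp_{\mathbf{y}}(f)\rvert+1\ge r_{l'}$), and from $f\notin I$ (for all $l$, $\lvert\supp_{\mathbf{x}}(f)\rvert<q_l$ or $\lvert\supp_{\mathbf{y}}(f)\rvert<r_l$) together force $\lvert\supp_{\mathbf{x}}(f)\rvert+\lvert\supp_{\mathbf{y}}(f)\rvert\ge q_{j+1}+r_j-2$. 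The monotonicity $q_1<\cdots<q_s$ and $r_1>\cdots>r_s$ is precisely what makes the correct index available in each of these inequalities, and keeping the matching of indices consistent across all three families simultaneously is the delicate point. A secondary, more routine obstacle is verifying that each constructed $f$ realizes the colon ideal $\mathfrak{p}$ exactly, i.e. that no variable outside $\mathfrak{p}$ multiplies $f$ into $I$, so that $f$ genuinely witnesses $\alpha((I:\mathfrak{p})/I)$.
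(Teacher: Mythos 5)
Your proposal is correct, and its skeleton coincides with the paper's: both read the relevant primes off Lemma \ref{mixedprime}, split them into the pure-$\mathbf{x}$, mixed, and pure-$\mathbf{y}$ families, exhibit an explicit monomial witness for each family, and then discard the boundary mixed terms via the dominations $q_2+r_1-2\geq q_1+r_1-1$ and $q_s+r_{s-1}-2\geq q_s+r_s-1$. The differences are still worth recording. You organize everything through the no-embedded-primes formula $\mathrm{v}(I)=\min\{\alpha((I:\mathfrak{p})/I)\mid\mathfrak{p}\in\mathrm{Ass}(I)\}$ of Lemma \ref{alpha} and treat (4)--(6) as a single computation on $\sum_{i}I_{q_i}J_{r_i}$, with (4) and (5) recovered as $s=2$ specializations (the formal $J_0=S_2$ device is legitimate, since $\mathcal{Y}_{m+1}=\varnothing$ in Lemma \ref{mixedprime}); the paper instead proves (4), (5), (6) by three separate but parallel case analyses run directly from the definition of the v-number. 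You also obtain (2) for free from Proposition \ref{IJ}, where the paper argues directly. The most substantive difference is your lower-bound mechanism: because each ideal is squarefree, no variable of $\mathfrak{p}$ can divide a witness $f$, which caps $\supp_{\mathbf{x}}(f)$ and $\supp_{\mathbf{y}}(f)$ from above, while $x_af\in I$, $y_bf\in I$ and $f\notin I$ cap them from below; the index bookkeeping you flag as the delicate point does close, and in fact for the mixed prime of index $j$ the constraints force exactly $|\supp_{\mathbf{x}}(f)|=q_{j+1}-1$ and $|\supp_{\mathbf{y}}(f)|=r_j-1$, since the monotonicity of the $q_i$ and $r_i$ pins the two membership indices to $j+1$ and $j$. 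The paper, by contrast, merely asserts that its exhibited witnesses have minimal degree in each case, so your scheme supplies a genuine justification for the step the published proof leaves implicit, at the cost of heavier notation.
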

\begin{proof}
$(1)$\; Since $I_qS$ is an ideal generated by square-free monomials of degree $q$, it follows that $v(I_qS) \geq q-1$. Let $f = x_{n-q+2} \dots x_n$. Then, $deg(f)=q-1$, and $(I_qS:f) = \mathfrak{p} = \langle x_1, x_2,\dots, x_{n-q+1} \rangle$ is an associated prime ideal of $I_qS$. Therefore, $v(I_qS) \leq q-1$. Combining these results, we conclude:
$$v(I_qS) = q-1. $$

$(2)$\; Similarly, $v(I_qJ_rS) \geq q+r-1$. Let $f = x_{n-q+2}\dots x_n y_{m-r+1}\dots y_m$. Then, $deg(f) = q+r-1$, and $(I_qJ_tS : f) = \mathfrak{p} = \langle x_1,x_2,\dots ,x_{n-q+1} \rangle$ is an associated prime ideal of $I_qJ_tS$. Therefore, $\mathrm{v}(I_qJ_rS) \leq q+r-1$. Combining these results, we conclude:
$$\mathrm{v}(I_qJ_rS) = q+r-1. $$

$(3)$\; By Lemma \ref{additive}, we have
$$\mathrm{v}((I_q + J_t)S) = \mathrm{v}(I_qS) + \mathrm{v}(J_tS) = q + t - 2.$$

$(4)$\; Given that $I = (I_qJ_r + I_s)S $ has the primary decomposition:
$$I = [\underset{X \in \mathcal{X}_{n-q+1} }{\cap} (X)] \cap [\bigcap_{\substack{ X \in \mathcal{X}_{n-s+1} \\ Y \in \mathcal{Y}_{m-r+1} }} ((X)+(Y))], $$
let $f = f_xf_y$ such that $(I:f)=\mathfrak{p}$, where $\mathfrak{p}$ is an associated prime ideal of $I$, and $f_x$(respectively, $f_y$) is the square-free monomial consisting of the product of all variables $x_i$(respectively, $y_j$) in $f$. Therefore, the associated prime ideal $\mathfrak{p}$ can be divided into the following two cases: 

\textbf{Case1}: $\mathfrak{p}$ is of the form $(X)$, where $X \in \mathcal{X}_{n-q+1} $. Since $q < s$ and $\mathfrak{p} = (I:f)$, by the proof of (2), we can choose $f = x_{n-q+2}\dots x_n y_{m-r+1}\dots y_m$, and such an $f$ has the minimal degree. In this case, 
$$\mathrm{v}_{\mathfrak{p}}(I) = deg(f) = q+r-1. $$

\textbf{Case2}: $\mathfrak{p}$ is of the form $((X)+(Y))$, where $X \in \mathcal{X}_{n-s+1}$ and $Y \in \mathcal{Y}_{m-r+1}$. Then, the minimal degree $f=f_xf_y$ satisfying $\mathfrak{p} = (I:f)$ is given by: 
$$f_x = x_{n-s+2}\dots x_n, f_y = y_{m-r+2}\dots y_m, $$
and thus, the minimal degree of such an $f$ is: 
$$deg(f) = s+r-2 \geq q+r-1. $$ 

In summary, 
$$v((I_qJ_r + I_s)S) = deg(f)_{min}= q+r-1. $$ 

$(5)$\; Given that $I = (I_qJ_r +I_sJ_t)S$ has the primary decomposition: 
$$I = [\underset{X \in \mathcal{X}_{n-q+1} }{\cap} (X)] \cap [\bigcap_{\substack{ X \in \mathcal{X}_{n-s+1} \\ Y \in \mathcal{Y}_{m-r+1} }} ((X)+(Y))] \cap [\underset{Y \in \mathcal{Y}_{m-t+1} }{\cap} (Y)], $$
let $f = f_xf_y$. Then, the associated prime ideal $\mathfrak{p}$ can be divided into the following three cases: 

\textbf{Case1}: $\mathfrak{p}$ is of the form $(X)$, where $X \in \mathcal{X}_{n-q+1} $. Since $q < s$, we can choose the minimal degree polynomial: 
$$f = x_{n-q+2}\dots x_n y_{m-r+1}\dots y_m, $$
which satisfies $(I:f) = \mathfrak{p}$. Therefore, 
$$\mathrm{v}_{\mathfrak{p}}(I) = q+r-1. $$

\textbf{Case2}: $\mathfrak{p}$  is of the form $((X)+(Y))$, where $X \in \mathcal{X}_{n-s+1}$ and $Y \in \mathcal{Y}_{m-r+1} $. Then, the minimal degree $f=f_xf_y$ satisfying $\mathfrak{p} = (I:f)$ is given by: 
$$f_x = x_{n-s+2}\dots x_n, f_y = y_{m-r+2}\dots y_m. $$
Thus, the minimal degree of such an $f$ is: 
$$\mathrm{deg}(f) = r+s-2. $$

\textbf{Case3}: $\mathfrak{p}$ is of the form $(Y)$, where $Y \in \mathcal{Y}_{m-t+1} $. Since $t < r$, we can choose the minimal degree polynomial: 
$$f = x_{n-s+1}\dots x_n y_{m-t+2}\dots y_m, $$
which satisfies $(I:f) = \mathfrak{p}$. Therefore, 
$$\mathrm{v}_{\mathfrak{p}}(I) = s+t-1.$$

In summary, 
$$\mathrm{v}((I_qJ_r +I_sJ_t)S) = deg(f)_{min} = \mathrm{min}\{ q+r-1 , s+t-1 \}. $$ 

$(6)$\; Given that $I = (\sum_{i=1}^{s} I_{q_i}J_{r_i})S$ has the primary decomposition: 
$$I = [\underset{X \in \mathcal{X}_{n-q_1+1} }{\cap} (X)] \cap [\bigcap_{i=1}^{s-1}\bigcap_{\substack{ X \in \mathcal{X}_{n-q_{i+1}+1} \\ Y \in \mathcal{Y}_{m-r_i+1} }} ((X)+(Y))] \cap [\underset{Y \in \mathcal{Y}_{m-r_s+1} }{\cap} (Y)], $$
then, \textbf{Case1} and \textbf{Case3} are similar to the case in (5), and under the corresponding associated prime ideals $\mathfrak{p}_1$ and $\mathfrak{p}_3$, we have: 
$$\mathrm{v}_{\mathfrak{p_1}}(I) = q_1+r_1-1, \mathrm{v}_{\mathfrak{p_3}}(I) = q_s+r_s-1. $$

\textbf{Case2}: $\mathfrak{p}_2$ is of the form $((X)+(Y))$, where $X \in \mathcal{X}_{n-q_{i+1}+1}$ and $Y \in \mathcal{Y}_{m-r_i+1}$ $(1\leq i\leq s-1)$. Then, the minimal degree $f=f_xf_y$ satisfying $\mathfrak{p} = (I:f)$ is given by: 
$$f_x = x_{n-q_{i+1}+2}\dots x_n, f_y = y_{m-r_i+2}\dots y_m. $$
Thus, the minimal degree of such an $f$ is: 
$$\mathrm{deg}(f) = q_{i+1}+r_i-2(1\leq i\leq s-1).$$

In summary, 
$$\mathrm{v}((\sum_{i=1}^{s} I_{q_i}J_{r_i})S) = \mathrm{min}\{ q_1+r_1-1, q_s+r_s-1, q_{i+1}+r_{i}-2(2\leq i\leq s-2) \}.$$
\end{proof}

Regarding the regularity of mixed product ideals, the following results have been established (see the reference \cite{chu2017stanley}): 

\begin{Lemma}
\label{regg}
Let the mixed product ideals $I_q\subseteq S_1=K[\textbf{x}]$, $J_r\subseteq S_2=K[\textbf{y}]$, and $S= K[\textbf{x}, \textbf{y}]$, where $1\leq q < s \leq n$ and $1\leq t < r \leq m$. Then: 
\begin{itemize}
    \item[(1)] $\mathrm{reg}(I_qS) = q;$
    \item[(2)] $\mathrm{reg}((I_qJ_r)S) = q+r;$
    \item[(3)] $\mathrm{reg}((I_q + J_t)S) = q+t-1;$
    \item[(4)] $\mathrm{reg}((I_qJ_r + I_s)S) = s+r-1;$
    \item[(5)] $\mathrm{reg}((I_qJ_r + I_sJ_t)S) = s+r-1.$
\end{itemize}
\end{Lemma}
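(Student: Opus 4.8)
The plan is to reduce all five formulas to two inputs: the linear resolution of the square-free Veronese ideals, and two homological exact sequences (a Künneth formula for sums and a Mayer--Vietoris sequence for products). First I would note that $I_q$ is the square-free Veronese ideal generated by all degree-$q$ square-free monomials of $\mathbf{x}$; by the classical Betti-number computation it has a $q$-linear resolution over $S_1$, so $\mathrm{reg}_{S_1}(I_q)=q$. Because $S=S_1[\mathbf{y}]$ is a free extension of $S_1$, base change preserves the minimal resolution, whence $\mathrm{reg}(I_qS)=q$ and $\mathrm{reg}(S/I_qS)=q-1$; this is (1), and symmetrically $\mathrm{reg}(S/J_rS)=r-1$. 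These are the only facts imported from outside.

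Next I would handle sums and products. For a sum, $S/(I_qS+J_tS)\cong(S_1/I_q)\otimes_K(S_2/J_t)$, and its minimal $S$-free resolution is the tensor product of the minimal resolutions over $S_1$ and $S_2$; the Künneth formula gives $\mathrm{reg}(S/(I_qS+J_tS))=\mathrm{reg}(S_1/I_q)+\mathrm{reg}(S_2/J_t)=(q-1)+(t-1)$, which yields (3). For a product I would use the identity $I_qS\cap J_rS=I_qJ_r$, valid for monomial ideals in disjoint variable sets, and plug it into the Mayer--Vietoris sequence
\[
0\to S/(I_qJ_r)\to S/I_qS\oplus S/J_rS\to S/(I_qS+J_rS)\to 0.
\]
Since the quotient term has regularity $q+r-2$ and the middle term has regularity $\max(q-1,r-1)$, the standard bound gives $\mathrm{reg}(S/I_qJ_r)\le q+r-1$; as $I_qJ_r$ is minimally generated in degree $q+r$, the reverse inequality is automatic, proving (2).

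The mixed cases (4) and (5) use the same sequence once the intersections are identified. Because $q<s$ forces $I_s\subseteq I_q$ and $t<r$ forces $J_r\subseteq J_t$, a direct monomial check gives $I_qJ_r\cap I_s=I_sJ_r$ and $I_qJ_r\cap I_sJ_t=I_sJ_r$. Substituting into
\[
0\to S/(I_sJ_r)\to S/(I_qJ_r)\oplus S/(I_s)\to S/(I_qJ_r+I_s)\to 0
\]
and into its analogue with $I_s$ replaced by $I_sJ_t$, and inserting the already-computed values $\mathrm{reg}(S/I_sJ_r)=s+r-1$, $\mathrm{reg}(S/I_qJ_r)=q+r-1$, $\mathrm{reg}(S/I_s)=s-1$, $\mathrm{reg}(S/I_sJ_t)=s+t-1$, produces the upper bound $\mathrm{reg}(\,\cdot\,S)\le s+r-1$ for both (4) and (5).

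The main obstacle is the exact lower bound in (4) and (5), since there the generators sit in degrees strictly below $s+r-1$, so the value must come from a higher syzygy. I would obtain it from the connecting homomorphism $\partial\colon \mathrm{Tor}_{j+1}(C)\to\mathrm{Tor}_j(A)$ of the Mayer--Vietoris sequence, where $A=S/(I_sJ_r)$ carries the extremal regularity. Since $\mathrm{reg}(B)=\max(q+r-1,s+t-1)$ in (5) (respectively $\max(q+r-1,s-1)$ in (4)) is strictly less than $s+r-1$ — and this strict inequality is exactly where $q<s$ and $t<r$ enter — the extremal class of $\mathrm{Tor}_j(A)$ in degree $j+s+r-1$ maps to zero in $\mathrm{Tor}_j(B)$, hence lies in the image of $\partial$; thus $\mathrm{Tor}_{j+1}(C)$ is nonzero in that degree and $\mathrm{reg}(C)\ge s+r-2$. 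Verifying that $\partial$ is surjective onto this single extremal graded piece is the only delicate point; once it is in place, (4) and (5) follow, and the same mechanism (now with $C$ carrying the extremal regularity) re-proves the lower bound in (2) when one prefers not to invoke the generation degree.
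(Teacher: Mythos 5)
The paper offers no proof of this lemma at all: it is imported from the literature (the sentence preceding it points to \cite{chu2017stanley}), so your argument is necessarily a different route, and it is correct. The ingredients you use are all sound: square-free Veronese ideals have linear quotients, hence $q$-linear resolutions, and regularity is preserved under the flat extension $S_1\to S$, giving (1); minimal resolutions tensor for quotients by ideals in disjoint variable sets, giving the additivity in (3); $I_qS\cap J_rS=I_qJ_rS$ holds for disjoint variables, and your further identities $I_qJ_r\cap I_s=I_sJ_r$ (from $q<s$) and $I_qJ_r\cap I_sJ_t=I_sJ_r$ (from $q<s$, $t<r$) check out by the lcm argument: a monomial divisible by $u_2\in I_s$ and by $v_1\in J_r$ is divisible by $u_2v_1\in I_sJ_r$. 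The three-term regularity bounds then give exactly the claimed upper bounds, and the generation-degree bound settles (2). One remark: the step you single out as delicate --- surjectivity of the connecting map $\partial\colon \mathrm{Tor}_{j+1}(C)\to\mathrm{Tor}_j(A)$ onto the extremal graded piece --- is in fact automatic. By exactness the image of $\partial$ in each degree equals the kernel of $\mathrm{Tor}_j(A)\to\mathrm{Tor}_j(B)$, and in degree $j+s+r-1$ the target $\mathrm{Tor}_j(B)$ vanishes outright because $\mathrm{reg}(B)=\max(q+r-1,s+t-1)<s+r-1=\mathrm{reg}(A)$ (respectively $\max(q+r-1,s-1)<s+r-1$ in (4)), so the kernel is the whole graded piece. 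Equivalently, you can dispense with $\partial$ altogether and quote the standard inequality $\mathrm{reg}(A)\le\max(\mathrm{reg}(B),\mathrm{reg}(C)+1)$: since $\mathrm{reg}(A)>\mathrm{reg}(B)$, it forces $\mathrm{reg}(C)\ge\mathrm{reg}(A)-1=s+r-2$, which is the lower bound you need in both (4) and (5). So your proof is complete as written, and slightly simpler than you feared.
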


Based on Theorem \ref{mixed} and Lemma \ref{regg}, the relationship between the $\mathrm{v}$-$\mathrm{number}$ and the regularity of the mixed product ideal $I$ can be established. 

\begin{Theorem}
\label{reg}
Let the monomial ideal $I\subseteq S= K[\mathbf{x}, \mathbf{y}]$ be a mixed product ideal of the following form, where $1\leq q < s\leq n$ and $1\leq t < r\leq m $: 
\begin{itemize}
    \item[(1)] $I_qS;$
    \item[(2)] $I_qJ_rS;$
    \item[(3)] $(I_q + J_t)S;$
    \item[(4)] $(I_qJ_r +I_s)S;$
    \item[(5)] $(I_qJ_r +I_sJ_t)S.$
\end{itemize}
Then, 
$$\mathrm{v}(I)\leq \mathrm{reg}(S/I).$$
\end{Theorem}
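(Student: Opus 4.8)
The plan is to prove the inequality purely by comparing the closed-form expressions already computed in Theorem~\ref{mixed} with those in Lemma~\ref{regg}, so that no fresh structural analysis of the ideals is required. The single ingredient linking the two is the standard regularity identity for a proper nonzero graded ideal, namely $\mathrm{reg}(S/I) = \mathrm{reg}(I) - 1$, which follows from the short exact sequence $0 \to I \to S \to S/I \to 0$ together with the vanishing of the regularity of $S$. Applying this identity to Lemma~\ref{regg} converts each listed value of $\mathrm{reg}(I)$ into the corresponding $\mathrm{reg}(S/I)$, after which the verification collapses to elementary arithmetic.

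First I would dispose of cases (1)--(3), where I expect equality. The identity turns Lemma~\ref{regg} into $\mathrm{reg}(S/I_qS) = q-1$, $\mathrm{reg}(S/I_qJ_rS) = q+r-1$, and $\mathrm{reg}(S/(I_q+J_t)S) = q+t-2$, and these coincide exactly with the values $q-1$, $q+r-1$, and $q+t-2$ recorded for $\mathrm{v}(I)$ in Theorem~\ref{mixed}. Hence in these three cases the bound $\mathrm{v}(I) \le \mathrm{reg}(S/I)$ holds with equality and nothing further is needed.

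The remaining cases (4) and (5) are the only ones in which the hypotheses $q < s$ and $t < r$ play a role, and this is where the inequality (rather than equality) is genuinely used. For case (4) the identity gives $\mathrm{reg}(S/(I_qJ_r+I_s)S) = s+r-2$ while $\mathrm{v}(I) = q+r-1$, and $q < s$ forces $q+r-1 \le s+r-2$. For case (5) we again obtain $\mathrm{reg}(S/(I_qJ_r+I_sJ_t)S) = s+r-2$, whereas $\mathrm{v}(I) = \min\{q+r-1,\, s+t-1\}$; here $q<s$ gives $q+r-1 \le s+r-2$ and $t<r$ gives $s+t-1 \le s+r-2$, so both candidates for the minimum lie at or below $s+r-2$ and the bound follows.

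I do not anticipate a genuine obstacle, since the substantive computations are already packaged in Theorem~\ref{mixed} and Lemma~\ref{regg}. The only points demanding care are the unit degree shift between $\mathrm{reg}(I)$ and $\mathrm{reg}(S/I)$, which must be applied consistently, and the observation that the strict inequalities $q<s$ and $t<r$ are precisely what is needed to pass from the equalities of cases (1)--(3) to the (possibly strict) inequalities of cases (4)--(5).
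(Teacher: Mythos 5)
Your proposal is correct and is exactly the argument the paper intends: the paper states Theorem~\ref{reg} as an immediate consequence of comparing the v-numbers from Theorem~\ref{mixed} with the regularities from Lemma~\ref{regg}, which is what you carry out, with the degree shift $\mathrm{reg}(S/I)=\mathrm{reg}(I)-1$ and the case-by-case arithmetic (equality in cases (1)--(3), strict use of $q<s$ and $t<r$ in cases (4)--(5)) made explicit.
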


\section{The $\mathrm{v}$-number of edge ideals}
Based on Rafael H. Villarreal's study of the $\mathrm{v}$-$\mathrm{number}$of edge ideals of clutters \cite{jaramillo2021v}, this section calculates the $\mathrm{v}$-$\mathrm{number}$ of the edge ideal $I(G)$ by identifying the stable sets $\mathcal{A}_G$ of the graph $G$, where $\mathcal{A}_G$ represents the collection of all stable sets whose neighborhood sets are minimal vertex covers of $G$ . Combining with the formula
$$\mathrm{v}(I(G)) = \mathrm{min}\{ |A|: A\in\mathcal{A}_G \},$$
the $\mathrm{v}$-$\mathrm{number}$ of the edge ideal $I(G)$ can be calculated.

Let $G$ be a simple graph, $I(G)$ its edge ideal, and $G^{k}$($k\in \mathbb{N}^{*}$) the $k$-th power of the graph $G$ \cite{iqbal2019depth} and \cite{iqbal2018depth}. The $k$-th power of the graph $G$, denoted by $G^k$, is defined as another graph with the same vertex set as $G$ , where two vertices in $G^k$ are connected if and only if their distance in $G$ is less than or equal to $k$. The edge ideal of $G^k$ is denoted by $I(G^k)$.
\begin{Theorem}
\label{Gd}
Let $G$ be a connected graph with $d$ vertices. Then, for any $1\leq k\leq d-1$, we have
$$\mathrm{v}(I(G^{d-1})) = 1 \leq \dots \leq \mathrm{v}(I(G^k)) \leq \dots \leq \mathrm{v}(I(G)).$$
\end{Theorem}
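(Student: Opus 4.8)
The plan is to treat the two assertions in the chain separately: the value at the top power $k=d-1$, and the monotonicity as $k$ decreases.

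\emph{Endpoint.} First I would note that since $G$ is connected on $d$ vertices its diameter is at most $d-1$, so in $G^{d-1}$ every pair of distinct vertices is at distance $\le d-1$ and hence adjacent; that is, $G^{d-1}=K_d$. For $K_d$ the only stable sets are single vertices, and for $A=\{x_i\}$ one has $N_{K_d}(A)=V\setminus\{x_i\}$, which is exactly a minimal vertex cover of $K_d$; thus $A\in\mathcal{A}_{K_d}$ with $|A|=1$. By Lemma \ref{stable}, $\mathrm{v}(I(G^{d-1}))\le 1$, and since $I(K_d)\ne\mathfrak{m}$ (it is generated in degree $2$) the v-number is at least $1$. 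Hence $\mathrm{v}(I(G^{d-1}))=1$.

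\emph{Reformulation.} For the chain I would first rewrite the v-number of each power combinatorially. Using Lemma \ref{stable} together with the duality between minimal vertex covers and maximal stable sets (and the identification of $\mathrm{Ass}$ with minimal vertex covers), $N_{G^m}(A)$ is a minimal vertex cover precisely when $V\setminus N_{G^m}(A)$ is a maximal stable set, and maximality here is automatic once it is stable, because every vertex of $N_{G^m}(A)$ is adjacent to $A\subseteq V\setminus N_{G^m}(A)$. Phrasing this through the distance function of $G$ gives $\mathrm{v}(I(G^m))=\min|A|$ over sets $A$ that are $(m+1)$-distance-independent (pairwise $G$-distance $\ge m+1$) and for which the far set $F_m(A):=\{v:\mathrm{dist}_G(v,A)\ge m+1\}$ is again $(m+1)$-distance-independent; equivalently $A$ dominates its complement within distance $m$ and that complement is a maximal stable set.

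\emph{Monotonicity, main step.} It then suffices to prove $\mathrm{v}(I(G^{k+1}))\le \mathrm{v}(I(G^k))$ for $1\le k\le d-2$. Let $A$ be optimal for $G^k$, so $|A|=\mathrm{v}(I(G^k))$ and $S:=A\cup F_k(A)$ is a maximal stable set of $G^k$. The aim is to build a witness $A'$ for $G^{k+1}$ with $|A'|\le|A|$. Since passing from $G^k$ to $G^{k+1}$ enlarges every distance ball uniformly, a smaller and more spread-out core should still dominate; concretely I would take $A'$ to be a maximal $(k+2)$-distance-independent subset of $A$, which is automatically stable in $G^{k+1}$ and satisfies $|A'|\le|A|$, and then pass to the maximal stable set of $G^{k+1}$ it determines.

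\emph{The obstacle.} The hard part is verifying the far-set condition for $A'$, namely that $V\setminus N_{G^{k+1}}(A')$ is stable in $G^{k+1}$, equivalently that $F_{k+1}(A')$ is $(k+2)$-distance-independent. This is exactly where the power structure must be used rather than a generic ``adding edges lowers the v-number'' principle, which is false: deleting one edge of $C_5$ yields $P_5$ with $\mathrm{v}(I(P_5))=1<2=\mathrm{v}(I(C_5))$ by Lemma \ref{line}, so edge addition can raise the v-number. What distinguishes the power case is that $G^{k+1}$ contains the \emph{entire} distance-$(k+1)$ layer, so the plan is to exploit this to show that two vertices left far from $A'$ would already have been forced to distance $\le k+1$ in a way incompatible with $S$ being a maximal stable set of $G^k$. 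Making this charging argument precise — in particular controlling the vertices that the pruning $A\rightsquigarrow A'$ pushes into the far set (a former $c$ with $\mathrm{dist}_G(c,A)\le k$ can end up at distance up to $2k+1$ from $A'$) — is the principal technical difficulty; should the naive subset construction fail, the fallback is to choose a maximal stable set of $G^{k+1}$ directly and bound its dominating core by $|A|$.
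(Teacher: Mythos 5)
Your endpoint argument ($G^{d-1}=K_d$, hence $\mathrm{v}(I(G^{d-1}))=1$) and your combinatorial reformulation of $\mathrm{v}(I(G^m))$ via Lemma \ref{stable} are both correct, and in fact more careful than what the paper writes. But the actual content of the theorem is the chain $\mathrm{v}(I(G^{k+1}))\le \mathrm{v}(I(G^k))$, and this step is missing from your proposal: you set up the pruning $A\rightsquigarrow A'$ (a maximal $(k+2)$-distance-independent subset of an optimal set $A$ for $G^k$), but then explicitly concede that you cannot verify the far-set condition, namely that $F_{k+1}(A')$ is $(k+2)$-distance-independent; your fallback is likewise only described, not executed. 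This is a genuine gap, not a presentational one: as your own $C_5$-versus-$P_5$ example shows, ``more edges'' does not by itself push the v-number down, so without that verification there is no proof of monotonicity at all. One observation that would have sharpened your analysis: if $A\in\mathcal{A}_{G^k}$, then every vertex of $F_k(A)$ lies at distance \emph{exactly} $k+1$ from $A$ — a vertex at distance $\ge k+2$ would have, on a shortest path to $A$, a neighbor that also lies in $F_k(A)$, contradicting $(k+1)$-distance-independence. Hence every vertex of $G$ is within distance $k+1$ of $A$, so $F_{k+1}(A)=\emptyset$, and the only obstruction to reusing $A$ itself as a witness for $G^{k+1}$ is the possible pairs of $A$ at distance exactly $k+1$; controlling the damage done by pruning those pairs is precisely the unproved step.

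For comparison, the paper's own proof of the monotone step is two sentences: it takes the optimal stable set $A_k$ for $G^k$, observes that each vertex's neighborhood only grows in passing from $G^{k-1}$ to $G^k$ while the vertex set is unchanged, and from this asserts that every qualifying stable set $A_{k-1}$ of $G^{k-1}$ satisfies $|A_{k-1}|\ge |A_k|$, which yields $\mathrm{v}(I(G^k))\le\mathrm{v}(I(G^{k-1}))$. That assertion faces exactly the transfer problem you isolated — a set in $\mathcal{A}_{G^{k-1}}$ need not be stable in $G^k$, and its far set need not remain stable under the change of power — and the paper does not address it. So your proposal does not close the gap, but it does correctly pinpoint the place where any complete proof (including a rigorous version of the paper's own argument) must do real work.
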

\begin{proof}
First, $I(G^{d-1})$ is the edge ideal of a complete graph. By choosing the stable set $A=\{ x_1 \}$, we observe that $N_{G^{d-1}}(A)$ is a minimal vertex cover of $G^{d-1}$. Therefore, $\mathrm{v}(I(G^{d-1})) = 1$. 

Let $A_k$ be the stable set corresponding to $\mathrm{v}(I(G^k))$, satisfying $\mathrm{v}(I(G^k)) = |A_k|$. For any $x_{k_i} \in A_k$, we have $|N_{G^{k-1}}(x_{k_i})| \leq |N_{G^k}(x_{k_i})|$. Since the vertex sets of all $k$-th powers of the graph are identical, for any stable set $A_{k-1}$ of $G^{k-1}$, it must hold that $|A_{k-1}|\geq |A_k|$. Thus, $\mathrm{v}(I(G^{k-1}))\leq \mathrm{v}(I(G^k))$ for $2\leq k\leq d-1$. 
\end{proof}

In particular, we calculate the $\mathrm{v}$-$\mathrm{number}$ of the edge ideals of the squares of path graphs and cycle graphs. 

\begin{Theorem}
\label{linetwo}
Let $I(L_n^2)$ be the edge ideal of the square of the path graph $L_n$. Then,
    \[
    \mathrm{v}(I(L_n^2)) =
    \begin{cases}
        [\frac{n}{6}], & \text{if}\, n \equiv 0,1 (\mathrm{mod} \, 6) \\
        [\frac{n}{6}] + 1. & \text{otherwise} 
    \end{cases}
    \]
\end{Theorem}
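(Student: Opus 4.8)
The plan is to compute $\mathrm{v}(I(L_n^2))$ through the stable-set formula of Lemma \ref{stable}, namely $\mathrm{v}(I(L_n^2)) = \min\{|A| : A \in \mathcal{A}_{L_n^2}\}$, so that the whole problem becomes a combinatorial optimization on $L_n^2$. Identifying the vertices of $L_n$ with $\{1, \dots, n\}$, two vertices are adjacent in $L_n^2$ exactly when their indices differ by $1$ or $2$; hence a set $A$ is stable iff its elements are pairwise at distance $\ge 3$, and the neighbor set $N_{L_n^2}(A)$ consists of all vertices lying within distance $2$ of some element of $A$. I would first record the reformulation that $A \in \mathcal{A}_{L_n^2}$ if and only if $A$ is stable and $B := \{1,\dots,n\}\setminus N_{L_n^2}(A)$ is a maximal independent set of $L_n^2$ (equivalently $N_{L_n^2}(A)$ is a minimal vertex cover): here $B$ contains $A$ and is independent precisely when $N_{L_n^2}(A)$ is a cover, while minimality of the cover is the statement that every vertex of $N_{L_n^2}(A)$ has a neighbour in $B$.

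For the upper bound I would exhibit, for each residue of $n$ modulo $6$, an explicit stable set $A$ of the asserted size. The natural choice places ``centres'' periodically with spacing $6$, for instance at positions congruent to $4 \pmod 6$; a single centre $c$ dominates the four vertices $c-2,c-1,c+1,c+2$, while the leftover midpoint $c+3$ falls into $B$. One checks that the resulting $B$ is an arithmetic progression $\{1,4,7,\dots\}$ of gap $3$, hence independent, and that adjusting the first and last centre according to $n \bmod 6$ makes $B$ reach both ends, so that it is maximal. Verifying that $N_{L_n^2}(A)$ is a genuinely \emph{minimal} cover reduces to checking that each of its vertices still has a neighbour in $B$, which the spacing $6$ guarantees.

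For the lower bound I would prove the structural constraints forced on any $A = \{c_1 < \dots < c_p\} \in \mathcal{A}_{L_n^2}$. The crucial observation is that a vertex strictly between two consecutive centres $c_j, c_{j+1}$ and at distance $\ge 3$ from both is undominated, hence must lie in $B$; since such vertices form a block of consecutive integers and $B$ is independent (pairwise distance $\ge 3$), this block has at most one element, forcing $c_{j+1}-c_j \le 6$. The same argument at the two ends forces $c_1 \le 4$ and $c_p \ge n-3$. Summing the gaps gives $n - 7 \le c_p - c_1 \le 6(p-1)$, which yields a lower bound for $p = |A|$ matching the claimed value, and the finer dependence on $n \bmod 6$ comes from re-examining which boundary positions are actually attainable while keeping $B$ maximal.

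The main obstacle, as is typical for exact formulas of this kind, is the boundary and residue bookkeeping: converting the ``gaps $\le 6$, ends pinned'' inequalities into the precise constant for each class $n \equiv 0,1,\dots,5 \pmod 6$, and simultaneously certifying in the construction that the neighbour set is a \emph{minimal} vertex cover (the condition that every cover vertex keeps a neighbour in $B$), not merely a cover. I would organise the argument as a single case analysis on $n \bmod 6$, establishing the matching upper and lower estimates in parallel so that the two bounds meet at the stated value.
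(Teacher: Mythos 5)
Your proposal is correct, and its second half takes a genuinely different route from the paper's. Both arguments start from Lemma \ref{stable}, and both certify the upper bound with the same periodic construction (centres at positions $\equiv 4 \pmod 6$, adjusted near the right end; this is exactly the paper's $A_n=\{x_4,x_{10},\dots\}$). The difference lies in the lower bound. The paper fixes the residue class $n=6t$, inducts on $t$, and argues by edge counting: each neighbourhood $N_{L_n^2}(x_i)$ covers at most $13$ of the $12t+9$ edges of $L_{6t+6}^2$, and an overlap argument reduces this to an effective $12$ per centre, whence $12s\geq 12t+9$ forces $s\geq t+1$; the remaining residues (Cases 2--6) are only sketched as ``similar.'' You instead prove directly that for any $A=\{c_1<\dots<c_p\}\in\mathcal{A}_{L_n^2}$, the independence of $B=V\setminus N_{L_n^2}(A)$ (dual to $N_{L_n^2}(A)$ being a vertex cover) pins $c_1\leq 4$, $c_p\geq n-3$, and $c_{j+1}-c_j\leq 6$, and telescoping gives $p\geq \lceil (n-1)/6\rceil$. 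This buys you two things: no induction and no per-residue case analysis in the lower bound --- note that $\lceil (n-1)/6\rceil$ already equals the claimed value in \emph{every} residue class ($t$ for $n=6t,6t+1$ and $t+1$ otherwise), so the ``finer re-examination'' you anticipate at the end is in fact unnecessary --- and a more rigorous argument than the paper's overlap-counting step, which is stated rather loosely (the claim that each centre effectively covers at most $12$ edges needs care at the boundary, where a rightmost centre has no edge $x_{i+3}x_{i+4}$ to force an overlap). When writing it up, make the duality you invoke explicit: $N_{L_n^2}(A)$ is a (minimal) vertex cover if and only if $B$ is a (maximal) stable set, which is precisely the cover/stable-set duality recorded in the paper's preliminaries, and observe that your gap argument only uses that $N_{L_n^2}(A)$ is a cover, so it bounds $|A|$ over a superset of $\mathcal{A}_{L_n^2}$, which is harmless for a lower bound.
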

\begin{proof}
    We prove the theorem by considering the following cases.

    \textbf{Case 1:} Let $n = 6t(t\in \mathbb{N})$. We proceed by induction on $t$. For $t=1$, we have $\mathrm{v}(I(L_6^2)) = 1$. $\mathrm{v}(I(L_6^2))$. The stable set corresponding to $\mathrm{v}(I(L_6^2))$ is $A_6 = \{ x_4 \}$, and $N_{L_6^2}(A_6) = \{ x_2, x_3, x_5, x_6 \}$. 

    \begin{center}
    \begin{tikzpicture}

        % 绘制六个点
        \filldraw[black, ultra thick] (0,0) circle (3pt) node[below] {$x_1$};
        \filldraw[black, ultra thick] (2,0) circle (3pt) node[below] {$x_2$};
        \filldraw[black, ultra thick] (4,0) circle (3pt) node[below] {$x_3$};
        \filldraw[black, ultra thick] (6,0) circle (3pt) node[below] {$x_4$};
        \filldraw[black, ultra thick] (8,0) circle (3pt) node[below] {$x_5$};
        \filldraw[black, ultra thick] (10,0) circle (3pt) node[below] {$x_6$};
        
        %使用直线连接点
        \draw[ultra thick] (0,0) -- (10,0);

        % 使用弧线连接点
        \draw[ultra thick] (0,0) .. controls (2,1) and (2,1) .. (4,0);  % x1 to x3
        \draw[ultra thick] (2,0) .. controls (4,1) and (4,1) .. (6,0);  % x2 to x4
        \draw[ultra thick] (4,0) .. controls (6,1) and (6,1) .. (8,0);  % x3 to x5
        \draw[ultra thick] (6,0) .. controls (8,1) and (8,1) .. (10,0); % x4 to x6

    \end{tikzpicture}
    \end{center}
    Assume the statement holds for $n=6t$. We now prove the statement for $n=6(t+1)$. By the induction hypothesis, $\mathrm{v}(I(L_{6t}^2)) = t$ corresponds to the stable set $A_{6t} = \{ x_4, x_{10}, \dots, x_{6t-2} \}$. 
    
    \begin{center}
        \begin{tikzpicture}
    
            \filldraw[black, ultra thick] (0,0) circle (3pt) node[below] {$x_1$};
            \filldraw[black, ultra thick] (1,0) circle (3pt) node[below] {$x_2$};
            \filldraw[black, ultra thick] (2,0) circle (3pt) node[below] {$x_3$};
            \filldraw[black, ultra thick] (3,0) circle (3pt) node[below] {$x_4$};
            \filldraw[black, ultra thick] (4,0) circle (3pt) node[below] {};
            \filldraw[black, ultra thick] (5,0) circle (3pt) node[below] {};
            \filldraw[black, ultra thick] (6,0) circle (3pt) node[below] {};
            \filldraw[black, ultra thick] (7,0) circle (3pt) node[below] {$x_{6t}$};
            \filldraw[black, ultra thick] (8,0) circle (3pt) node[below] {$x_{6t+1}$};
            \filldraw[black, ultra thick] (9,0) circle (3pt) node[below] {$x_{6t+2}$};
            \filldraw[black, ultra thick] (10,0) circle (3pt) node[below] {$x_{6t+3}$};
            \filldraw[black, ultra thick] (11,0) circle (3pt) node[below] {$x_{6t+4}$};
            \filldraw[black, ultra thick] (12,0) circle (3pt) node[below] {$x_{6t+5}$};
            \filldraw[black, ultra thick] (13,0) circle (3pt) node[below] {$x_{6t+6}$};

            %使用直线连接点
            \draw[ultra thick] (0,0) -- (3,0);
            \draw[ultra thick] (7,0) -- (13,0);
    
            % 使用弧线连接点
            \draw[ultra thick] (0,0) .. controls (1,1) and (1,1) .. (2,0);  % x1 to x3
            \draw[ultra thick] (1,0) .. controls (2,1) and (2,1) .. (3,0);  % x2 to x4
            \draw[ultra thick] (2,0) .. controls (3,1) and (3,1) .. (4,0);  % x3 to x5
            \draw[ultra thick] (6,0) .. controls (7,1) and (7,1) .. (8,0);
            \draw[ultra thick] (7,0) .. controls (8,1) and (8,1) .. (9,0);
            \draw[ultra thick] (8,0) .. controls (9,1) and (9,1) .. (10,0);
            \draw[ultra thick] (9,0) .. controls (10,1) and (10,1) .. (11,0);
            \draw[ultra thick] (10,0) .. controls (11,1) and (11,1) .. (12,0);
            \draw[ultra thick] (11,0) .. controls (12,1) and (12,1) .. (13,0);

        \end{tikzpicture}
    \end{center}

    Now assume $n=6(t+1)$. Let $B$ be a stable set of $L_{6t+6}^2$ such that $N_{L_{6t+6}^2}(B)$ is a vertex cover of $L_{6t+6}^2$, and the number of vertices in $B$ is $|B| = s$. From the figure below, it can be observed that for any $1\leq i\leq 6t+6$, $N_{L_{6t+6}^2}(x_i)$ covers at most 13 edges, where $N_{L_{6t+6}^2}(x_i) = \{ x_{i-2}, x_{i-1}, x_{i+1}, x_{i+2} \}$. 

    \begin{center}
        \begin{tikzpicture}
    
            \filldraw[black, ultra thick] (0,0) circle (3pt) node[below] {$x_{i-4}$};
            \filldraw[black, ultra thick] (1,0) circle (3pt) node[below] {$x_{i-3}$};
            \filldraw[black, ultra thick] (2,0) circle (3pt) node[below] {$x_{i-2}$};
            \filldraw[black, ultra thick] (3,0) circle (3pt) node[below] {$x_{i-1}$};
            \filldraw[black, ultra thick] (4,0) circle (3pt) node[below] {$x_{i}$};
            \filldraw[black, ultra thick] (5,0) circle (3pt) node[below] {$x_{i+1}$};
            \filldraw[black, ultra thick] (6,0) circle (3pt) node[below] {$x_{i+2}$};
            \filldraw[black, ultra thick] (7,0) circle (3pt) node[below] {$x_{i+3}$};
            \filldraw[black, ultra thick] (8,0) circle (3pt) node[below] {$x_{i+4}$};
            
            %使用直线连接点
            \draw[ultra thick] (0,0) -- (8,0);
    
            % 使用弧线连接点
            \draw[ultra thick] (0,0) .. controls (1,1) and (1,1) .. (2,0);  % x1 to x3
            \draw[ultra thick] (1,0) .. controls (2,1) and (2,1) .. (3,0);  % x2 to x4
            \draw[ultra thick] (2,0) .. controls (3,1) and (3,1) .. (4,0);  % x3 to x5
            \draw[ultra thick] (3,0) .. controls (4,1) and (4,1) .. (5,0);
            \draw[ultra thick] (4,0) .. controls (5,1) and (5,1) .. (6,0);
            \draw[ultra thick] (5,0) .. controls (6,1) and (6,1) .. (7,0);
            \draw[ultra thick] (6,0) .. controls (7,1) and (7,1) .. (8,0);

        \end{tikzpicture}
    \end{center}

    Since the edge $x_{i+3}x_{i+4}$ cannot be covered by the aforementioned neighborhood set, at least one of the two vertices of this edge must belong to the neighborhood set of another point in the stable set $B$ . This implies that the 13 edges covered by $N_{L_{6t+6}^2}(x_i)$ will overlap, and at least one edge will be considered repeatedly. Therefore, we consider that a neighborhood set covers at most 12 edges, leading to $12s\geq 12t+9$, which implies $s\geq t+1 > t$. Thus, $\mathrm{v}(I(L_{6t+6}^2)) > t$. By the induction hypothesis, $A_{6t} \cup \{ x_{6t+4} \}$ is a stable set, and $N_{L_{6t+6}^2}(A_{6t} \cup \{ x_{6t+4} \})$ is a vertex cover of $L_{6t+6}^2$. Therefore, $\mathrm{v}(I(L_{6t+6}^2)) \leq t+1$, and hence $\mathrm{v}(I(L_{6t+6}^2)) = t+1$. The corresponding stable set is $A_{6t+6} = \{ x_4, x_{10}, \dots, x_{6t-2} ,x_{6t+4} \}$. 

    Using a similar proof, we have: \\
\textbf{Case 2:} Let $n=6t+1$. Then, $\mathrm{v}(I(L_n^2)) = [\frac{n}{6}] $, and the corresponding stable set is $A_n = \{ x_4 ,x_{10} ,\dots ,x_{6t-2}  \}$;\\
\textbf{Case 3:} Let $n=6t+2$. Then, $\mathrm{v}(I(L_n^2)) = [\frac{n}{6}] + 1 $, and the corresponding stable set is $A_n = \{ x_4 ,x_{10} ,\dots ,x_{6t+2}  \}$;\\
\textbf{Case 4:} Let $n=6t+3$. Then, $\mathrm{v}(I(L_n^2)) = [\frac{n}{6}] + 1 $, and the corresponding stable set is $A_n = \{ x_4 ,x_{10} ,\dots ,x_{6t+3}  \}$;\\
\textbf{Case 5:} Let $n=6t+4$. Then, $\mathrm{v}(I(L_n^2)) = [\frac{n}{6}] + 1 $, and the corresponding stable set is $A_n = \{ x_4 ,x_{10} ,\dots ,x_{6t+4}  \}$;\\
\textbf{Case 6:} Let $n=6t+5$. Then, $\mathrm{v}(I(L_n^2)) = [\frac{n}{6}] + 1 $, and the corresponding stable set is $A_n = \{ x_4 ,x_{10} ,\dots ,x_{6t+4}  \}$. 

\end{proof}

\begin{Lemma}
    Let $L_n^2$ be the square of a path graph with $n$ vertices $(x_1,\dots ,x_n)$. Then, any stable set corresponding to $\mathrm{v}(I(L_n^2))$ does not simultaneously contain both endpoints of the path graph. 
\end{Lemma}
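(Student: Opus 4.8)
The plan is to argue by contradiction: suppose that some stable set $A$ with $N_{L_n^2}(A)$ a minimal vertex cover and $|A| = \mathrm{v}(I(L_n^2))$ contains both endpoints $x_1$ and $x_n$. The key structural fact about $L_n^2$ is that the neighborhood of an interior vertex $x_i$ is $N_{L_n^2}(x_i) = \{x_{i-2}, x_{i-1}, x_{i+1}, x_{i+2}\}$, whereas the endpoint $x_1$ has the truncated neighborhood $N_{L_n^2}(x_1) = \{x_2, x_3\}$ and similarly $N_{L_n^2}(x_n) = \{x_{n-2}, x_{n-1}\}$. Thus an endpoint covers strictly fewer edges than an interior vertex, which is exactly the inefficiency I want to exploit.

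First I would set up the counting argument exactly as in the proof of Theorem \ref{linetwo}: each vertex of $A$ contributes its neighborhood to the cover, and the edges of $L_n^2$ must all be covered, with the covering-efficiency bound (at most $12$ edges effectively covered per interior vertex, after accounting for the forced overlap on the edge $x_{i+3}x_{i+4}$) driving the lower bound on $|A|$. The main step is then to show that if $x_1 \in A$, one can replace $x_1$ by a nearby interior vertex (the natural candidate is $x_4$, matching the optimal stable sets $A_n = \{x_4, x_{10}, \dots\}$ exhibited in Theorem \ref{linetwo}) to obtain another stable set $A'$ whose neighborhood still covers every edge that $N_{L_n^2}(x_1)$ was responsible for, but now also reaches further into the path. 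Since $N_{L_n^2}(x_4) = \{x_2, x_3, x_5, x_6\} \supseteq \{x_2, x_3\} = N_{L_n^2}(x_1)$, the swap loses no coverage near the left end, so $N_{L_n^2}(A')$ remains a vertex cover with $|A'| = |A|$; symmetrically at the right end one replaces $x_n$ by $x_{n-3}$.

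The heart of the argument is to turn this ``no loss of coverage'' observation into a genuine contradiction with minimality of $|A|$, rather than merely producing an equally good stable set. The cleanest route is to show that having both endpoints forces $|A|$ to strictly exceed the value $\mathrm{v}(I(L_n^2))$ computed in Theorem \ref{linetwo}. Concretely, I would run the edge-counting bound $12|A| \geq (\text{number of edges of } L_n^2)$ but sharpen it at the two ends: because $x_1$ and $x_n$ each cover at most $2$ edges instead of the effective $12$ (or $13$) available to an interior vertex, their presence creates a deficit of roughly $10$ covered edges \emph{per endpoint}, so the inequality becomes $12(|A| - 2) + 2 + 2 \geq E(L_n^2)$, which forces $|A|$ to be at least one larger than the optimum achieved by the interior stable sets of Theorem \ref{linetwo}. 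This contradicts $|A| = \mathrm{v}(I(L_n^2)) = \min\{|A| : A \in \mathcal{A}_{L_n^2}\}$.

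The hard part will be making the edge-accounting at the two ends fully rigorous while avoiding double-counting: the overlap argument from Theorem \ref{linetwo} assumed interior vertices with full neighborhoods, and near the endpoints the ``at most $12$'' and ``forced overlap on $x_{i+3}x_{i+4}$'' bookkeeping must be redone carefully, since the edges incident to $x_1, x_2, x_n, x_{n-1}$ behave differently from the generic interior case. I expect the contradiction to go through cleanly once these boundary edges are handled separately, but the case analysis (depending on $n \bmod 6$, matching the six cases of Theorem \ref{linetwo}) is where the real work lies; alternatively, the swap-based argument may be packaged more slickly by observing that any optimal $A$ containing an endpoint can be transformed into one of the canonical interior optimal sets $A_n$ of Theorem \ref{linetwo}, none of which contains an endpoint, so no \emph{minimal}-cardinality stable set need contain an endpoint.
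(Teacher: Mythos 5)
Your route---a global edge-counting bound sharpened at the two endpoints, compared against the explicit values of Theorem \ref{linetwo}---is genuinely different from the paper's proof, but as written it contains a wrong constant and, more importantly, it defers exactly the step on which the strict inequality depends. The constant first: in the framework of Lemma \ref{stable} the vertex cover is $N_{L_n^2}(A)$, so what $x_1 \in A$ contributes is the set of edges covered by $N_{L_n^2}(x_1) = \{x_2, x_3\}$, namely the six edges $x_1x_2$, $x_1x_3$, $x_2x_3$, $x_2x_4$, $x_3x_4$, $x_3x_5$, not ``at most $2$'' (you are counting the edges incident to $x_1$ itself, which is not what the cover consists of). This error happens to be harmless: with the correct constant the inequality becomes $12(|A|-2) + 12 \geq 2n-3$, i.e. $|A| \geq (2n+9)/12$, which still exceeds the value in Theorem \ref{linetwo} in every residue class mod $6$. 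What is \emph{not} harmless is the ``$12$''. The honest bound on the number of edges covered by one full interior neighborhood is $13$, and if you run your inequality with $13$, i.e. $13(|A|-2) + 12 \geq 2n-3$, you only get $|A| \geq (2n+11)/13$, which for $n = 6t$ exceeds $t$ only when $t \leq 10$: the contradiction evaporates for all large $n$. So your entire proof lives inside the overlap/double-counting correction that turns $13$ into $12$, which you explicitly postpone (``where the real work lies''); moreover that bookkeeping is delicate, since an overlap edge can be shared by two members of $A$, so ``each vertex loses at least one edge'' does not by itself yield $12$ per vertex. Finally, your ``slicker'' fallback---transforming an optimal $A$ containing endpoints into a canonical interior optimal set---establishes only that \emph{some} optimal stable set avoids the endpoints, whereas the lemma asserts that \emph{every} stable set realizing $\mathrm{v}(I(L_n^2))$ fails to contain both endpoints; the swap argument alone cannot close the proof, as you half-acknowledge.

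For contrast, the paper avoids counting entirely. If $x_1, x_n \in A$, stability forces $x_2, x_3, x_{n-2}, x_{n-1} \notin A$, so $A \setminus \{x_1, x_n\} \subseteq \{x_4, \dots, x_{n-3}\}$; and since $N(x_1) \cup N(x_n) = \{x_2, x_3, x_{n-2}, x_{n-1}\}$ contains no endpoint of any edge of the induced subgraph $L_{n-6}^2$ on $\{x_4, \dots, x_{n-3}\}$, the set $A \setminus \{x_1, x_n\}$ is a stable set of $L_{n-6}^2$ whose neighborhood covers it, whence $|A| \geq 2 + \mathrm{v}(I(L_{n-6}^2))$. On the other hand, shifting an optimal stable set of $L_{n-6}^2$ three positions to the right and adjoining $x_4$ (whose neighborhood $\{x_2, x_3, x_5, x_6\}$ covers every edge meeting $\{x_1, \dots, x_6\}$) produces a stable set of size $1 + \mathrm{v}(I(L_{n-6}^2))$ whose neighborhood is a vertex cover, so $\mathrm{v}(I(L_n^2)) \leq 1 + \mathrm{v}(I(L_{n-6}^2)) < |A|$, a contradiction. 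This reduction-plus-exchange argument needs no formula for $\mathrm{v}(I(L_n^2))$, no per-vertex coverage bound, and no case analysis mod $6$---precisely the three ingredients your proposal still has to supply.
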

\begin{proof}
    Assume there exists a stable set $A$ corresponding to $\mathrm{v}(I(L_n^2))$ that contains both endpoints $x_1$ and $x_n$. It is clear that $N_{L_n^2}(\{ x_1 \} ) = \{ x_2, x_3 \}$ and $N_{L_n^2}(\{ x_n \} ) = \{ x_{n-1}, x_{n-2}\}$, and each neighborhood set covers three edges. In this case, we obtain $L_{n-6}$, whose vertex set is $V(L_{n-4}^2) = \{ x_4, x_5, \dots, x_{n-3} \}$. Therefore, $|A|\geq \mathrm{v}(L_{n-6}^2)$. Let $\mathrm{v}(L_{n-6}^2) = |A'|$, where $A'\subset \{ x_4, x_5, \dots, x_{n-3}\}$ is the stable set corresponding to $\mathrm{v}(L_{n-6}^2)$. Define $A''=\{ x_{i+3}\in V(L_n^2) | x_i\in A' \}$, then $|A''| = |A'| = \mathrm{v}(L_{n-6}^2)$. It can be observed that $\{ x_4 \} \cup A''$ is a stable set of $L_n^2$, and $N_{L_n^2}(A''\cup \{ x_3 \})$ is a vertex cover of $L_n^2$. Thus, $\mathrm{v}(L_n) \leq 1+\mathrm{v}(L_{n-6}) < |A|$, which contradicts the assumption. 
    
    In summary, the stable set $A$ does not simultaneously contain both endpoints of the path graph.

\end{proof}

\begin{Proposition}
    Let $L_n$ be a path graph and $C_n$ be a cycle graph. Then, for $n\geq 7$, we have 
    $$\mathrm{v}(I(C_n^2)) = \mathrm{v}(I(L_{n-5}^2)) + 1. $$
\end{Proposition}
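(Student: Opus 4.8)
The plan is to translate between optimal stable sets of $C_n^2$ and of $L_{n-5}^2$ using the formula $\mathrm{v}(I(G)) = \min\{|A| : A \in \mathcal{A}_G\}$ of Lemma~\ref{stable}, where $\mathcal{A}_G$ is the family of stable sets $A$ whose neighbor set $N_G(A)$ is a minimal vertex cover. The geometric heart of the argument is a structural observation that I would isolate as a preliminary lemma. Labelling $V(C_n) = \{x_1, \dots, x_n\}$ cyclically, in $C_n^2$ each vertex $x_i$ is adjacent exactly to $x_{i\pm 1}$ and $x_{i\pm 2}$ (indices mod $n$), so for any vertex $v$ the set $\{v\} \cup N_{C_n^2}(v)$ consists of five consecutive vertices (these are five distinct vertices precisely because $n \geq 7$). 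After a cyclic relabelling I may take $v = x_1$, so that $\{x_1\} \cup N_{C_n^2}(x_1) = \{x_1, x_2, x_3, x_{n-1}, x_n\}$ and the residual arc is $\{x_4, \dots, x_{n-2}\}$, a set of $n-5$ vertices. Because the five consecutive vertices $x_{n-1}, x_n, x_1, x_2, x_3$ are deleted, two surviving vertices can be within cycle-distance $2$ only along the arc, so the induced subgraph of $C_n^2$ on $\{x_4, \dots, x_{n-2}\}$ is isomorphic to $L_{n-5}^2$.

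For the upper bound $\mathrm{v}(I(C_n^2)) \le \mathrm{v}(I(L_{n-5}^2)) + 1$, I would start from a set $A' \in \mathcal{A}_{L_{n-5}^2}$ with $|A'| = \mathrm{v}(I(L_{n-5}^2))$ placed on $\{x_4, \dots, x_{n-2}\}$, and set $A = \{x_1\} \cup A'$. Stability of $A$ in $C_n^2$ is immediate, since $A'$ avoids $N_{C_n^2}(x_1)$. The point to verify is that $N_{C_n^2}(A) = \{x_2, x_3, x_{n-1}, x_n\} \cup N_{L_{n-5}^2}(A')$ is a \emph{minimal} vertex cover of $C_n^2$: it covers every edge because $x_2, x_3, x_{n-1}, x_n$ handle each edge meeting $\{x_1, x_2, x_3, x_{n-1}, x_n\}$ while $N_{L_{n-5}^2}(A')$ covers the internal path edges, and it is minimal because each boundary vertex among $x_2, x_3, x_{n-1}, x_n$ owns the private edge joining it to $x_1$, and each $w \in N_{L_{n-5}^2}(A')$ retains a private edge inside $\{x_4, \dots, x_{n-2}\}$ whose other endpoint stays out of the cover. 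Hence $A \in \mathcal{A}_{C_n^2}$ and $\mathrm{v}(I(C_n^2)) \le |A| = \mathrm{v}(I(L_{n-5}^2)) + 1$.

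For the reverse inequality I would take $A \in \mathcal{A}_{C_n^2}$ realizing $\mathrm{v}(I(C_n^2))$. Since $C_n^2$ has no isolated vertices, $A \ne \varnothing$, so after relabelling I may assume $x_1 \in A$; stability then forces $A' := A \setminus \{x_1\} \subseteq \{x_4, \dots, x_{n-2}\}$, a stable set of $L_{n-5}^2$. The task is to show $N_{L_{n-5}^2}(A')$ is again a minimal vertex cover of $L_{n-5}^2$. Every internal path edge is covered by the cover $N_{C_n^2}(A)$, and its covering vertex cannot be a neighbor of $x_1$ (those lie outside the arc), so it lies in $N_{L_{n-5}^2}(A')$; minimality transfers because a private edge of $w \in N_{L_{n-5}^2}(A') \subseteq N_{C_n^2}(A)$ relative to the minimal cover $N_{C_n^2}(A)$ must have both endpoints inside the arc. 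Thus $A' \in \mathcal{A}_{L_{n-5}^2}$, whence $|A'| \ge \mathrm{v}(I(L_{n-5}^2))$ and $\mathrm{v}(I(C_n^2)) = |A'| + 1 \ge \mathrm{v}(I(L_{n-5}^2)) + 1$.

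The adjacency computations and the explicit list $N_{C_n^2}(x_1) = \{x_2, x_3, x_{n-1}, x_n\}$ are routine; the main obstacle is the bookkeeping around \emph{minimality} of the vertex covers in both directions, i.e.\ guaranteeing that passage between $C_n^2$ and $L_{n-5}^2$ neither destroys nor creates redundant cover vertices (recall that for square-free edge ideals a colon $(I(G):f)$ is an associated prime only when the corresponding vertex cover is minimal, so a merely-covering neighbor set does not witness the v-number). This is precisely where the hypothesis $n \ge 7$ and the private-edge analysis enter, and localizing private edges to the interior of the arc is the step I would write out most carefully.
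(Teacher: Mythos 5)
Your proof is correct, and its skeleton matches the paper's: both identify the subgraph of $C_n^2$ induced on the complement of the closed neighborhood of a single vertex with $L_{n-5}^2$ (the paper deletes the five vertices around $x_{n-2}$ and works on the arc $\{x_1,\dots,x_{n-5}\}$; you delete those around $x_1$ and work on $\{x_4,\dots,x_{n-2}\}$), and both get the upper bound by adjoining that one vertex to an optimal stable set of the squared path and invoking Lemma~\ref{stable}. Where you genuinely diverge is the lower bound. The paper deduces the strict inequality $\mathrm{v}(I(L_{n-5}^2)) < \mathrm{v}(I(C_n^2))$ by a one-line appeal to its preceding lemma (no optimal stable set of a squared path contains both endpoints), leaving the actual mechanism implicit; you instead argue directly that if $A \in \mathcal{A}_{C_n^2}$ is optimal with $x_1 \in A$, then $A' = A \setminus \{x_1\}$ lies in $\mathcal{A}_{L_{n-5}^2}$. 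The crux of your argument is the private-edge localization: since the four neighbors of $x_1$ all belong to the minimal cover $N_{C_n^2}(A)$ and $x_1$ is adjacent to no arc vertex, any private edge of an arc vertex $w \in N_{C_n^2}(A)$ must have both endpoints in the arc, so minimality of the neighbor set survives the passage to $L_{n-5}^2$. This is precisely the step the paper glosses over (it also never checks minimality, as opposed to mere covering, in its upper-bound construction, which you do). The trade-off: the paper's route reuses its endpoint lemma and is shorter on the page, while yours is self-contained, avoids that lemma entirely, and makes explicit the $\mathcal{A}$-membership that Lemma~\ref{stable} actually requires on both sides of the inequality.
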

\begin{proof}
    For $n\geq 5$, let $V(C_n^2) = \{ x_1, x_2, \dots, x_n \}$ and $V(L_{n-5}^2) = \{ x_1, x_2, \dots, x_{n-5} \}$. 

    \begin{center}
        \begin{tikzpicture}
    
            \filldraw[black, ultra thick] (0,0) circle (3pt) node[above left] {$x_1$};
            \filldraw[black, ultra thick] (1,0) circle (3pt) node[below] {$x_2$};
            \filldraw[black, ultra thick] (2,0) circle (3pt) node[below] {$x_3$};
            \filldraw[black, ultra thick] (3,0) circle (3pt) node[below] {};
            \filldraw[black, ultra thick] (4,0) circle (3pt) node[below] {};
            \filldraw[black, ultra thick] (5,0) circle (3pt) node[below] {};
            \filldraw[black, ultra thick] (6,0) circle (3pt) node[below] {$x_{n-7}$};
            \filldraw[black, ultra thick] (7,0) circle (3pt) node[below] {$x_{n-6}$};
            \filldraw[black, ultra thick] (8,0) circle (3pt) node[above right] {$x_{n-5}$};
            \filldraw[black, ultra thick] (8,-1) circle (3pt) node[left] {$x_{n-4}$};
            \filldraw[black, ultra thick] (8,-2) circle (3pt) node[left] {$x_{n-3}$};
            \filldraw[black, ultra thick] (4,-3) circle (3pt) node[above] {$x_{n-2}$};
            \filldraw[black, ultra thick] (0,-2) circle (3pt) node[right] {$x_{n-1}$};
            \filldraw[black, ultra thick] (0,-1) circle (3pt) node[right] {$x_{n}$};
            
            %使用直线连接点
            \draw[ultra thick] (0,0) -- (2,0);
            \draw[ultra thick] (6,0) -- (8,0);
            \draw[ultra thick] (8,0) -- (8,-2);
            \draw[ultra thick] (0,0) -- (0,-2);
            \draw[ultra thick] (0,-2) -- (4,-3);
            \draw[ultra thick] (8,-2) -- (4,-3);
    
            % 使用弧线连接点
            \draw[ultra thick] (0,0) .. controls (1,1) and (1,1) .. (2,0);
            \draw[ultra thick] (1,0) .. controls (2,1) and (2,1) .. (3,0);
            \draw[ultra thick] (5,0) .. controls (6,1) and (6,1) .. (7,0);
            \draw[ultra thick] (6,0) .. controls (7,1) and (7,1) .. (8,0);
            \draw[ultra thick] (1,0) .. controls (0,2) and (-2,0) .. (0,-1);
            \draw[ultra thick] (7,0) .. controls (8,2) and (10,0) .. (8,-1);
            \draw[ultra thick] (0,0) .. controls (-1,-1) and (-1,-1) .. (0,-2);
            \draw[ultra thick] (8,0) .. controls (9,-1) and (9,-1) .. (8,-2);
            \draw[ultra thick] (0,-1) .. controls (-2,-3) and (2,-4) .. (4,-3);
            \draw[ultra thick] (8,-1) .. controls (10,-3) and (6,-4) .. (4,-3);
            \draw[ultra thick] (0,-2) .. controls (2,-4) and (6,-4) .. (8,-2);

        \end{tikzpicture}
    \end{center}
    Assume $\mathrm{v}(I(L_{n-5}^2)) = |A|$, where $A$ is the stable set corresponding to the $\mathrm{v}$-$\mathrm{number}$. Since no stable set can simultaneously include both endpoints, we have $\mathrm{v}(I(L_{n-5}^2)) < \mathrm{v}(I(C_n^2))$. Let $A\cup \{ x_{n-2} \}$ be a stable set of $C_n^2$, whose neighborhood set is a vertex cover of $C_n^2$. Therefore, for any $n\geq 5 $, we have $\mathrm{v}(I(C_n^2))\leq \mathrm{v}(I(L_{n-5}^2)) + 1$. In summary, 
    $$\mathrm{v}(I(C_n^2)) = \mathrm{v}(I(L_{n-5}^2)) + 1,(n\geq 5), $$
    and the corresponding stable set is $A\cup \{ x_{n-2} \}$. 
\end{proof}

\begin{Corollary}
    Let $I(C_n^2)$ be the edge ideal of the square of the cycle graph $C_n$. Then,
    \[
    \mathrm{v}(I(C_n^2)) =
    \begin{cases}
        [\frac{n-5}{6}] + 1, & \text{if}\,  n \equiv 0,5 (\mathrm{mod} \, 6) \\
        [\frac{n-5}{6}] + 2. & \text{otherwise} 
    \end{cases}
    \]
\end{Corollary}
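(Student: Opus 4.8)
The plan is to obtain this entirely as a consequence of the preceding Proposition together with Theorem~\ref{linetwo}; no fresh combinatorial analysis of the graph $C_n^2$ is needed, since the whole content reduces to translating congruence classes. First I would invoke the preceding Proposition, valid for $n \geq 7$, which reduces the cyclic case to the path case:
$$\mathrm{v}(I(C_n^2)) = \mathrm{v}(I(L_{n-5}^2)) + 1.$$
This already isolates the only substantive ingredient, namely the known value of $\mathrm{v}(I(L_m^2))$ from Theorem~\ref{linetwo}.

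Next I would apply Theorem~\ref{linetwo} with $m = n-5$ in place of $n$. That theorem branches according to whether $m \equiv 0,1 \pmod 6$ or not, so the key bookkeeping step is to re-express these congruences in terms of $n$. Since $m = n-5 \equiv n+1 \pmod 6$, we have $m \equiv 0 \pmod 6 \iff n \equiv 5 \pmod 6$ and $m \equiv 1 \pmod 6 \iff n \equiv 0 \pmod 6$. Consequently the first branch of Theorem~\ref{linetwo} corresponds exactly to $n \equiv 0,5 \pmod 6$, and the complementary ``otherwise'' branch covers precisely the remaining residues, matching the split asserted in the statement.

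Finally I would substitute. For $n \equiv 0,5 \pmod 6$, Theorem~\ref{linetwo} gives $\mathrm{v}(I(L_{n-5}^2)) = [\frac{n-5}{6}]$, and adding $1$ yields $[\frac{n-5}{6}]+1$; for the remaining residues it gives $\mathrm{v}(I(L_{n-5}^2)) = [\frac{n-5}{6}]+1$, and adding $1$ yields $[\frac{n-5}{6}]+2$. These are exactly the two cases in the corollary.

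I do not expect a genuine obstacle here, as the result is a direct corollary of the two cited results. The only points requiring care are the congruence translation $n-5 \equiv n+1 \pmod 6$ — making sure the class $\{m \equiv 0,1\}$ maps cleanly onto $\{n \equiv 0,5\}$ with no residue double-counted or dropped — and confirming the range of validity, i.e.\ that $n \geq 7$ guarantees both that the Proposition applies and that $L_{n-5}^2$ is large enough for Theorem~\ref{linetwo}; any small boundary values of $n$ for which $L_{n-5}^2$ degenerates should be excluded or verified separately.
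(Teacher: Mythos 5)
Your proposal is correct and is exactly the argument the paper intends: the corollary is stated without proof precisely because it follows by combining the preceding Proposition ($\mathrm{v}(I(C_n^2)) = \mathrm{v}(I(L_{n-5}^2)) + 1$) with Theorem~\ref{linetwo} applied to $m = n-5$, together with the congruence translation $m \equiv 0,1 \pmod 6 \iff n \equiv 5,0 \pmod 6$. Your care about the range of validity ($n \geq 7$ so that both cited results apply) is a reasonable extra check, but otherwise there is nothing to add.
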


\section{The $\mathrm{v}$-number of various powers of an ideal}

\subsection{The v-number of monomial ideals with linear powers}
Antonino Ficarra proposed an open conjecture regarding the $\mathrm{v}$-$\mathrm{number}$ of the $k$-th power of monomial ideals \cite{ficarra2023simonconjecturetextvnumbermonomial}.

\noindent
\textbf{Conjecture}:\, Let $I\subseteq S$ be a monomial ideal with linear powers. Then, for any $k\geq 1$, 
$$\mathrm{v}(I^k) = \alpha(I)k - 1. $$
We prove that when $I$ is a homogeneous monomial ideal with linear powers and $I^k$ has no embedded associated primes for any $k\geq 1$, the equality $\mathrm{v}(I^k) = \alpha(I)k - 1$ holds. First, we prove the following proposition. 

\begin{Proposition}
\label{d-1}
    Let $I$ be an ideal generated by homogeneous polynomials of degree $d$ in the polynomial ring $S = K[x_1, x_2, \dots, x_n]$ with a linear resolution. Then, $(I : \mathfrak{m})/I$ has generators of degree $d - 1$. 
\end{Proposition}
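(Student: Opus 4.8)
The plan is to identify $(I:\mathfrak{m})/I$ with the socle of $S/I$ and then trap its degrees between a lower bound coming from the generating degree of $I$ and an upper bound coming from the linear resolution through Castelnuovo--Mumford regularity. First I would note that, directly from the definition of the colon ideal, a class $\bar f$ lies in $(I:\mathfrak{m})/I$ exactly when $x_i f\in I$ for every $i$, i.e. $\mathfrak{m}f\subseteq I$; hence
\[
(I:\mathfrak{m})/I=(0:_{S/I}\mathfrak{m})=\Soc(S/I).
\]
In particular this module is annihilated by $\mathfrak{m}$, so as a graded object it is a direct sum of copies of $K$, and the assertion ``has generators of degree $d-1$'' is equivalent to saying it is concentrated in degree $d-1$.

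For the lower bound I would use only that $I$ is generated in degree $d$, so $I_e=0$ for $e<d$. If $f\in S_j$ with $j\le d-2$ satisfies $x_if\in I$ for all $i$, then $x_if\in I_{j+1}=0$ since $j+1\le d-1<d$; as $S$ is a domain this forces $f=0$. Together with $I_j=0$ this yields $\big((I:\mathfrak{m})/I\big)_j=0$ for every $j\le d-2$.

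The heart of the matter is the upper bound, and this is where linearity of the resolution enters. Since $I$ is generated in degree $d$ and has a linear resolution, the nonzero graded Betti numbers of $S/I$ beyond the initial copy of $S$ all lie on the diagonal $j-i=d-1$, so $\reg(S/I)=d-1$. I would then invoke the standard fact that the socle of a finitely generated graded module embeds into its zeroth local cohomology, $\Soc(S/I)\subseteq H^0_{\mathfrak{m}}(S/I)$, together with the inequality $a_0(S/I)\le\reg(S/I)$ relating the top nonzero degree $a_0$ of $H^0_{\mathfrak{m}}(S/I)$ to the regularity. This gives that every socle degree is at most $d-1$, i.e. $\big((I:\mathfrak{m})/I\big)_j=0$ for all $j\ge d$.

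Combining the two bounds, $(I:\mathfrak{m})/I$ vanishes in every degree except possibly $d-1$ (and is genuinely concentrated there whenever $\mathfrak{m}\in\Ass(S/I)$, being zero otherwise), which proves the claim. The main obstacle is the upper bound: one must state carefully, and cite, the inequality bounding the top socle degree by $\reg(S/I)$, since that is the single point at which the hypothesis of a linear resolution is actually used; the lower bound and the identification with the socle are routine.
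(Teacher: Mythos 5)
Your argument is correct, but it takes a genuinely different route from the paper's. The paper computes $\Tor_n^S(K,S/I)$ in two ways: once from the linear resolution of $S/I$ tensored with $K$, which places that Tor module entirely in degree $d+n-1$, and once from the Koszul resolution of $K=S/\mathfrak{m}$ tensored with $S/I$, which identifies it with $\bigl((I:\mathfrak{m})/I\bigr)(-n)$; comparing the two answers concentrates $(I:\mathfrak{m})/I$ in degree $d-1$. You instead identify $(I:\mathfrak{m})/I$ with $\Soc(S/I)$ and squeeze its degrees from both sides: vanishing below $d-1$ follows from the elementary facts that $I_e=0$ for $e<d$ and that $S$ is a domain, while vanishing in degrees $\ge d$ follows from $\Soc(S/I)\subseteq H^0_{\mathfrak{m}}(S/I)$ together with $a_0(S/I)\le\reg(S/I)=d-1$. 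Your route is shorter and more conceptual, but its crucial step rests on the equivalence of the Betti-number and local-cohomology descriptions of Castelnuovo--Mumford regularity, which you rightly flag as the one fact that must be carefully cited; the paper's Koszul/Tor computation is more self-contained and yields slightly finer information, namely $(I:\mathfrak{m})/I\cong K(-(d-1))^{\beta_{n,d+n-1}}$, so the socle dimension equals the last graded Betti number. Both treatments handle the degenerate case identically: if $\depth(S/I)>0$, then the socle (equivalently $\Tor_n$, equivalently $\beta_{n,d+n-1}$) is zero and the statement is vacuous.
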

\begin{proof}
    First, consider the linear resolution of $S/I$: 
$$0 \rightarrow F_n \rightarrow F_{n-1} \rightarrow \dots \rightarrow F_2 \rightarrow F_1 \rightarrow F_0 \rightarrow S/I \rightarrow 0, $$
where, $F_i = S(-d-(i-1))^{\beta_{n,d+(i-1)}}$ for $1\leq i\leq n$, and $F_0 = S$. Tensoring this long exact sequence with $S/\mathfrak{m}$ , we obtain: 
$$0 \rightarrow K(-d-(n-1))^{\beta_{n, d+n-1}} \rightarrow K(-d-(n-2))^{\beta_{n, d+n-2}} \rightarrow \dots$$ 
$$\rightarrow K(-d)^{\beta_{1,d}} \rightarrow K \rightarrow S/I\otimes S/\mathfrak{m} \rightarrow 0, $$
where $K = S/\mathfrak{m}$, and $K(-d-(n-1))^{\beta_{n, d+n-1}} \cong Tor_n(S/\mathfrak{m} , S/I)$. 
    
Similarly, the linear resolution of $S/\mathfrak{m}$ is: 
$$0 \rightarrow S_{x_1x_2\dots x_n}(-n) \rightarrow \overset{n}{\underset{i = 1}{\oplus}} S_{x_1\dots \hat{x_i}\dots x_n}(-n+1) \rightarrow \dots $$
$$\rightarrow \overset{n}{\underset{i = 1}{\oplus}} S_{x_i}(-1) \rightarrow S \rightarrow S/\mathfrak{m} \rightarrow 0,$$
Tensoring this long exact sequence with $S/I$, we obtain: 
$$0 \rightarrow S_{x_1x_2\dots x_n}(-n) \otimes S/I \rightarrow \overset{n}{\underset{i = 1}{\oplus}} S_{x_1\dots \hat{x_i}\dots x_n}(-n+1)\otimes S/I \rightarrow \dots $$
$$\rightarrow \overset{n}{\underset{i = 1}{\oplus}} S_{x_i}(-1) \otimes S/I \rightarrow S\otimes S/I \rightarrow S/\mathfrak{m}\otimes S/I \rightarrow 0,$$
where $S_{x_1x_2\dots x_n}(-n) \otimes S/I \cong (I:\mathfrak{m})/I(-n) \cong Tor_n(S/\mathfrak{m} , S/I)$. Therefore, 
$$(I:\mathfrak{m})/I(-n) \cong K(-d-(n-1))^{\beta_{n, d+n-1}}. $$

In summary, $(I : \mathfrak{m})/I$ has generators of degree $d - 1$. 
\end{proof}

\begin{Corollary}
\label{dk-1}
    Let $I$ be an ideal generated by homogeneous polynomials of degree $d$ in the polynomial ring $S = K[x_1, x_2, \dots, x_n]$ with linear powers. Then, for any $k\geq 1$, $(I^k : \mathfrak{m})/I^k$ has generators of degree $kd-1$. 
\end{Corollary}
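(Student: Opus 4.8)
The plan is to reduce the statement to Proposition \ref{d-1} applied to the ideal $I^k$ in place of $I$. The hypothesis that $I$ has \emph{linear powers} means precisely that $I^k$ has a linear resolution for every $k \geq 1$; moreover, since $I$ is generated by homogeneous polynomials of degree $d$, the product $I^k$ is generated by homogeneous polynomials of degree $kd$. Thus $I^k$ satisfies exactly the hypotheses of Proposition \ref{d-1}, with the single generating degree $d$ replaced by $kd$.

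First I would record that $I^k$ is a homogeneous ideal generated in the single degree $kd$: every generator of $I^k$ is a product of $k$ generators of $I$, each of degree $d$, hence of degree $kd$, and conversely these products generate $I^k$. Next I would invoke the linear-powers hypothesis to conclude that $I^k$ has a linear resolution. With these two facts in hand, Proposition \ref{d-1} applies verbatim to $I^k$ and yields that $(I^k : \mathfrak{m})/I^k$ has generators of degree $kd - 1$, which is exactly the desired conclusion.

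There is essentially no obstacle beyond checking that the two hypotheses of Proposition \ref{d-1} — generation in a single degree and the existence of a linear resolution — transfer to $I^k$, and both are immediate from the definition of linear powers. The one point to keep in view is that the proof of Proposition \ref{d-1} rests on a self-duality computation of $\Tor_n(S/\mathfrak{m}, S/J)$ both from the linear resolution of $S/J$ and from the Koszul resolution of $S/\mathfrak{m}$; since that argument used only that $J$ is generated in a single degree and admits a linear resolution, it carries over without change to $J = I^k$, so nothing further needs to be reproved.
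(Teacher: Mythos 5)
Your proposal is correct and takes exactly the route the paper intends: the paper offers no separate proof of Corollary \ref{dk-1} precisely because it is the immediate application of Proposition \ref{d-1} to $J = I^k$, which is generated in the single degree $kd$ and has a linear resolution by the linear-powers hypothesis, just as you argue.
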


\begin{Theorem}
\label{1-d}
    Let $I$ be an ideal generated by homogeneous monomials of degree $d$ in the polynomial ring $S = K[x_1, x_2, \dots, x_n]$ with a linear resolution, and suppose $I$ has no embedded associated primes. Then, 
    $$\mathrm{v}(I) = d - 1. $$
\end{Theorem}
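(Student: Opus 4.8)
The plan is to combine the characterization of the v-number for ideals without embedded primes stated just above (namely $\mathrm{v}(I) = \min\{\alpha((I:\mathfrak{p})/I) : \mathfrak{p}\in\mathrm{Ass}(I)\}$) with Proposition \ref{d-1}, sandwiching $\mathrm{v}(I)$ between two copies of $d-1$. Note first that $\alpha(I)=d$, since $I$ is generated in degree $d$.

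First I would settle the lower bound $\mathrm{v}(I)\ge d-1$ by a direct degree count that uses only $\alpha(I)=d$. Fix any $\mathfrak{p}\in\mathrm{Ass}(I)$ and a homogeneous $g\in (I:\mathfrak{p})\setminus I$ of minimal degree $\alpha((I:\mathfrak{p})/I)$. Choosing a variable $x_i\in\mathfrak{p}$ gives $x_i g\in I$ while $g\notin I$, so $\deg(x_i g)=\deg g+1\ge \alpha(I)=d$, i.e. $\deg g\ge d-1$. Since this holds for every associated prime, the formula yields $\mathrm{v}(I)\ge d-1$.

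For the upper bound I must exhibit a single associated prime $\mathfrak{p}$ carrying an element of $(I:\mathfrak{p})\setminus I$ of degree exactly $d-1$. When $\mathfrak{m}\in\mathrm{Ass}(I)$ this is immediate: Proposition \ref{d-1} says $(I:\mathfrak{m})/I$ is generated in degree $d-1$, so $\alpha((I:\mathfrak{m})/I)=d-1$ and the formula forces $\mathrm{v}(I)\le d-1$. The substantive case is $\mathfrak{m}\notin\mathrm{Ass}(I)$ (for instance edge ideals of unmixed clutters), where $(I:\mathfrak{m})/I$ vanishes and Proposition \ref{d-1} gives no information. Here my plan is to reduce to the previous situation. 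By Lemma \ref{polideal} I may polarize: since $I$ has no embedded primes, $\mathrm{v}(I)=\mathrm{v}(I(pol))$, and polarization preserves the graded Betti numbers (hence the linear resolution and the generating degree $d$) and, as $I$ has no embedded primes, $I(pol)$ likewise has none. Thus I may assume $I$ squarefree, i.e. $I=I(\mathcal{C})$ for a $d$-uniform clutter $\mathcal{C}$. By Lemma \ref{stable} the claim $\mathrm{v}(I)=d-1$ then becomes the purely combinatorial assertion that the smallest stable set $A$ with $N_{\mathcal{C}}(A)$ a minimal vertex cover has size $d-1$; the argument above already forces $|A|\ge d-1$ (if $|A|<d-1$ then $|A\cup\{v\}|<d$ can never contain a $d$-edge, so $N_{\mathcal{C}}(A)=\varnothing$ is not a cover), so it remains to build one of size exactly $d-1$.

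The main obstacle is exactly this construction, and it is where the linear resolution hypothesis must be used essentially. The naive attempt---take a $(d-1)$-subset $A$ of some edge $e$ and hope $N_{\mathcal{C}}(A)$ is a vertex cover---can fail, and the difficulty is to show that linearity of the resolution (equivalently, by Eagon--Reiner, Cohen--Macaulayness of the Alexander dual) guarantees an edge and a $(d-1)$-face of it whose neighborhood covers every edge. An alternative, more algebraic route is to localize at a minimal prime $\mathfrak{p}$ so that $\mathfrak{p}$ becomes maximal and then apply Proposition \ref{d-1} to an Artinian reduction; the obstruction there is that the $\mathfrak{p}$-primary component need not be generated in degree $d$ nor inherit a linear resolution, so the degree-$(d-1)$ socle element is not produced for free. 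In either approach, transporting the sharp degree conclusion of Proposition \ref{d-1} from $\mathfrak{m}$ to a prime $\mathfrak{p}\neq\mathfrak{m}$---that is, controlling degrees through the reduction---is the crux of the proof.
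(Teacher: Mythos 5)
Your completed steps coincide exactly with the paper's entire proof, and the case you flag as "the substantive one" is precisely where the paper is silent. The paper's argument is: by Lemma~\ref{alpha}, $\mathrm{v}(I)=\min\{\alpha((I:\mathfrak{p})/I) \mid \mathfrak{p}\in\mathrm{Ass}(I)\}$; by Proposition~\ref{d-1} together with the inclusion $(I:\mathfrak{m})\subseteq(I:\mathfrak{p})$, it concludes $\mathrm{v}(I)\le d-1$; and $\mathrm{v}(I)\ge\alpha(I)-1=d-1$ follows from the definition. That is your lower bound plus your "immediate" upper-bound case, with no case distinction on whether $\mathfrak{m}\in\mathrm{Ass}(I)$ and no verification that $(I:\mathfrak{m})\neq I$.

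Your objection to that unconditional use of Proposition~\ref{d-1} is correct, and it is a genuine gap in the paper, not only in your attempt. The proof of Proposition~\ref{d-1} identifies $(I:\mathfrak{m})/I$ with a shift of $\mathrm{Tor}_n(S/\mathfrak{m},S/I)$, so the proposition only says that this module is concentrated in degree $d-1$; it is the zero module whenever $\mathrm{depth}(S/I)>0$, i.e.\ whenever $\mathfrak{m}\notin\mathrm{Ass}(I)$. Under the hypothesis of no embedded primes, $\mathfrak{m}\in\mathrm{Ass}(I)$ forces $\mathrm{Ass}(I)=\{\mathfrak{m}\}$, so the paper's upper-bound step has content only when $I$ is $\mathfrak{m}$-primary. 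For anything else --- already for $I=(x_1x_2)\subset K[x_1,x_2]$, which is generated in degree $2$, has a linear resolution and no embedded primes --- one gets $(I:\mathfrak{m})=I$, the inclusion $(I:\mathfrak{m})\subseteq(I:\mathfrak{p})$ carries no information, and the conclusion $\mathrm{v}(I)=1$ (via $(I:x_2)=(x_1)$) must be produced by some other means. Your polarization reduction (using Lemma~\ref{polideal} and preservation of graded Betti numbers) and the reformulation via Lemma~\ref{stable} as finding a stable set $A$ of size $d-1$ in a $d$-uniform clutter whose neighborhood is a minimal vertex cover is a sensible plan for this remaining case, and your size lower bound $|A|\ge d-1$ is fine; but you do not construct such an $A$, so your proposal is incomplete. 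The honest summary is that neither you nor the paper proves the theorem outside the $\mathfrak{m}$-primary case, and closing that gap requires an input genuinely beyond Proposition~\ref{d-1}, whose socle-degree statement carries no information once $\mathrm{depth}(S/I)>0$.
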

\begin{proof}
    By Lemma \ref{alpha}, since the monomial ideal $I$ has no embedded primes, we have 
    $$\mathrm{v}(I) = \mathrm{min}\{ \alpha((I:\mathfrak{p})/I) | \mathfrak{p}\in \mathrm{Ass}(I) \}. $$
    By Proposition \ref{d-1}, and noting that $(I:\mathfrak{m}) \subseteq (I:\mathfrak{p})$, it follows that $\mathrm{v}(I)\leq d-1$. By the definition of the $\mathrm{v}$-$\mathrm{number}$, we have $\mathrm{v}(I)\geq \alpha(I)-1 = d-1$. Therefore, $\mathrm{v}(I) = d-1$. 
\end{proof}

Further considering the monomial ideal $I^k(k\geq 1)$ in the conjecture, we have the following result. 

\begin{Theorem}
\label{kd-1}
Let $I$ be a homogeneous monomial ideal of degree $d$ in the polynomial ring $S = K[x_1, x_2, \dots, x_n]$ with linear powers, and suppose that for any $k \geq 1$, the ideal $I^k$ has no embedded associated primes. Then, 
    $$\mathrm{v}(I^k) = \alpha(I)k-1. $$
\end{Theorem}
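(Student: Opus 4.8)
The plan is to apply Theorem \ref{1-d} directly to the ideal $I^k$, treating $I^k$ itself as the object whose $\mathrm{v}$-number we wish to compute rather than reworking the resolution argument from scratch. First I would record that, since $I$ is generated by monomials of degree $d$, every minimal generator of $I^k$ is a product of $k$ such monomials and hence has degree $kd$; in particular $I^k$ is a monomial ideal generated in the single degree $kd$, so $\alpha(I^k) = kd = \alpha(I)k$. Next I would invoke the linear-powers hypothesis on $I$, which by the definition quoted in the introduction means precisely that $I^k$ admits a linear resolution for every $k \geq 1$. Together with the standing assumption that $I^k$ has no embedded associated primes, the ideal $I^k$ satisfies verbatim the three hypotheses of Theorem \ref{1-d} with the degree parameter taken to be $kd$. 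That theorem then yields $\mathrm{v}(I^k) = kd - 1 = \alpha(I)k - 1$, which is the desired identity.

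Should one prefer an argument that does not merely cite Theorem \ref{1-d} but re-derives the two bounds, the mechanism is the same as in its proof. For the lower bound, the definition of the $\mathrm{v}$-number forces $\mathrm{v}(I^k) \geq \alpha(I^k) - 1 = kd - 1$, since any colon ideal $(I^k : f)$ that is prime and properly contains $I^k$ requires $f \notin I^k$ and hence $\deg f \geq \alpha(I^k) - 1$. For the upper bound I would use Corollary \ref{dk-1}, which guarantees that $(I^k : \mathfrak{m})/I^k$ has a generator of degree $kd - 1$; combining this with Lemma \ref{alpha} (applicable because $I^k$ has no embedded primes) and the containment $(I^k : \mathfrak{m}) \subseteq (I^k : \mathfrak{p})$ valid for every $\mathfrak{p} \in \mathrm{Ass}(I^k)$ produces an element realizing $\mathrm{v}(I^k) \leq kd - 1$. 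The two inequalities together give equality.

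The only substantive point to check is that the three hypotheses genuinely transfer to the powers, and none of them requires real work. The implication ``linear powers of $I$'' $\Rightarrow$ ``linear resolution of $I^k$'' is immediate from the definition, the single-degree generation of $I^k$ in degree $kd$ is elementary, and the no-embedded-primes condition on each $I^k$ is exactly the extra assumption imposed in the statement, so it is available by hypothesis rather than something to be established. I therefore expect no genuine obstacle at this final stage: all the conceptual content sits in the already-proved Proposition \ref{d-1}, its power version Corollary \ref{dk-1}, and Theorem \ref{1-d}, and this theorem is the clean specialization of that machinery to the family $\{I^k\}_{k \geq 1}$.
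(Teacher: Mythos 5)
Your proposal is correct and is essentially the paper's own argument: the paper states Theorem \ref{kd-1} without a written proof precisely because it is the specialization of Theorem \ref{1-d} (built on Proposition \ref{d-1}, Corollary \ref{dk-1}, and Lemma \ref{alpha}) to the ideal $I^k$, which is generated in the single degree $kd$, has a linear resolution by the linear-powers hypothesis, and has no embedded primes by assumption. Your transfer of the three hypotheses to $I^k$ and the resulting identity $\mathrm{v}(I^k)=kd-1=\alpha(I)k-1$ match the intended proof exactly.
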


The reference \cite{simis1994ideal} proves that for any power $I^k$($k\geq 1$) of the edge ideal of a bipartite graph, there are no embedded associated primes. Therefore, combining with Theorem \ref{kd-1}, if the edge ideal $I$ of a bipartite graph has linear powers, we have the following result. 

\begin{Corollary}
    Let $I$ be the edge ideal of a bipartite graph with linear powers. Then, for any $k \geq 1$, $\mathrm{v}(I^k) = 2k-1$. 
\end{Corollary}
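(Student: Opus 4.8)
The plan is to observe that $I^k$ itself satisfies, verbatim, the three hypotheses of Theorem \ref{1-d}, so that the desired equality follows by applying that theorem to the power $I^k$ in place of $I$. First I would record that if $I$ is generated by homogeneous monomials all of degree $d$, then every minimal generator of $I^k$ is a product of $k$ generators of $I$ and hence has degree exactly $kd$; in particular $I^k$ is generated by homogeneous monomials of the single degree $kd$ and $\alpha(I^k)=k\alpha(I)=kd$. Next, the hypothesis that $I$ has linear powers means precisely that $I^k$ has a linear resolution for every $k\geq 1$. Finally, the standing assumption supplies that $I^k$ has no embedded associated primes. Thus $I^k$ is a monomial ideal generated in the single degree $kd$, with linear resolution and without embedded primes.

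With these three facts in hand, Theorem \ref{1-d} applied to $I^k$ (reading $kd$ for the generating degree) yields immediately
$$\mathrm{v}(I^k) = kd - 1 = \alpha(I)k - 1,$$
which is the assertion.

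If one prefers to argue directly rather than invoke Theorem \ref{1-d} as a black box, the same conclusion can be reached by mirroring its proof. The lower bound $\mathrm{v}(I^k)\geq \alpha(I^k)-1 = kd-1$ is immediate from the definition of the $\mathrm{v}$-number. For the upper bound, since $I^k$ has no embedded primes, Lemma \ref{alpha} gives $\mathrm{v}(I^k)=\min\{\alpha((I^k:\mathfrak{p})/I^k):\mathfrak{p}\in\mathrm{Ass}(I^k)\}$; by Corollary \ref{dk-1} the module $(I^k:\mathfrak{m})/I^k$ has a generator of degree $kd-1$, and because $(I^k:\mathfrak{m})\subseteq (I^k:\mathfrak{p})$ for every $\mathfrak{p}\in\mathrm{Ass}(I^k)$ (as each such $\mathfrak{p}$ is contained in $\mathfrak{m}$) this produces a nonzero class of degree $kd-1$ in each $(I^k:\mathfrak{p})/I^k$, forcing $\mathrm{v}(I^k)\leq kd-1$.

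I do not expect a genuine obstacle here, since the statement is essentially Theorem \ref{1-d} applied to the ideal $I^k$. The only points that need care are bookkeeping: confirming that $I^k$ is generated in the single degree $kd$ (so that the phrase \emph{generated by homogeneous monomials of degree $kd$} in Theorem \ref{1-d} is literally satisfied), and checking that the element of $(I^k:\mathfrak{m})/I^k$ of degree $kd-1$ produced by Corollary \ref{dk-1} survives — i.e. remains nonzero — after passing to $(I^k:\mathfrak{p})/I^k$, which it does because it is already nonzero modulo the larger ideal $I^k$.
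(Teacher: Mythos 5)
Your reduction is otherwise sound, and it is essentially the paper's own route: the paper invokes Theorem \ref{kd-1}, which is in effect Theorem \ref{1-d} applied to each power $I^k$ (generated in the single degree $2k$, with linear resolution by the linear-powers hypothesis), exactly as you do. The problem is your third hypothesis. You write that ``the standing assumption supplies that $I^k$ has no embedded associated primes,'' but the corollary contains no such assumption: its hypotheses are only that $I$ is the edge ideal of a \emph{bipartite} graph and that $I$ has linear powers. The absence of embedded primes in every power is precisely what the bipartite hypothesis must be used to deliver, and it is a nontrivial theorem rather than bookkeeping: by the result of Simis--Vasconcelos--Villarreal cited in the paper (\cite{simis1994ideal}), powers of edge ideals of bipartite graphs have no embedded associated primes (equivalently, bipartiteness of $G$ makes $I(G)$ normally torsion-free, so $I^{(k)}=I^k$ and $\mathrm{Ass}(I^k)=\mathrm{Min}(I)$ for all $k\geq 1$). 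Your proposal never uses bipartiteness anywhere.

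As written, then, what you have proved is the conditional statement ``if $I$ is a monomial ideal generated in degree $2$ with linear powers and each $I^k$ has no embedded associated primes, then $\mathrm{v}(I^k)=2k-1$,'' which is just Theorem \ref{kd-1} with $d=2$, not the corollary itself. The fix is short: insert the citation that for a bipartite graph every power $I^k$ is free of embedded associated primes, and then your application of Theorem \ref{1-d} to $I^k$ (or your direct argument via Lemma \ref{alpha} and Corollary \ref{dk-1}, which is also fine) goes through verbatim.
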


\subsection{The v-number of the k-th power of an ideal}
The $\mathrm{v}$-$\mathrm{number}$ of powers of the edge ideal of a path graph is known to exhibit the following two cases. 

\begin{Example}
    Let the monomial ideal $I(L_n)$ be the edge ideal of the path graph $L_n$, where $n \equiv 2,3 (\mathrm{mod} 4)$. Without loss of generality, let $n=4k+2$ or $n=4k+3 (k\geq 1)$. Take the monomial corresponding to $\mathrm{v}(I(L_n))$ as $f = x_3x_7\dots x_{4k-1}x_{4k+1}$. Then, the monomial corresponding to $\mathrm{v}(I(L_n)^2)$ can be chosen as 
    $$g = x_3x_7\dots x_{4k-1}x_{4k}x_{4k+1},$$
    and thus $\mathrm{v}(I(L_n)^2) = \mathrm{v}(I(L_n)) + 1$. 
\end{Example}
\begin{Example}
    Let $I$ be the edge ideal of the path graph $L_8$. Then, $\mathrm{v}(I(L_8))=2$ and $\mathrm{v}(I(L_8)^2) = 4 = \mathrm{v}(I(L_8)) + 2$. 
\end{Example}

It can be observed that the difference in the $\mathrm{v}$-$\mathrm{number}$ between consecutive powers of the edge ideal of a path graph exhibits two cases. The following theorem provides a detailed explanation of the relationship between the $\mathrm{v}$-$\mathrm{number}$ of powers of the edge ideal $I(G)$ of an arbitrary graph. 

\begin{Theorem}
\label{IGk}
Let the monomial ideal $I = I(G)$ be the edge ideal of a graph $G$. Then, for any $k \geq 1$, we have 
    $$\mathrm{v}(I^k) + 2 \geq \mathrm{v}(I^{k+1}) \geq \mathrm{v}(I^k) + 1. $$
\end{Theorem}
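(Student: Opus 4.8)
The plan is to work throughout with the monomial characterisation of the $\mathrm{v}$-number: for a monomial ideal $J$, $\mathrm{v}(J)=\min\{\deg f : f \text{ a monomial},\ (J:f)\in\mathrm{Ass}(S/J)\}$, and to pass between consecutive powers by multiplying or dividing a minimal witness by a single edge. Two preliminary observations drive everything. First, $I^{k+1}=I\cdot I^k$, so membership in $I^{k+1}$ is governed by being able to factor out $k+1$ edges. Second, if $g$ is a minimal witness for $I^{k+1}$, i.e. $(I^{k+1}:g)=\mathfrak q$ is prime with $\deg g=\mathrm{v}(I^{k+1})$, then a short cancellation argument shows $g\in I^k$: picking $x_v\in\mathfrak q$, the monomial $gx_v\in I^{k+1}$ is divisible by a product $e_1\cdots e_{k+1}$ of edges, exactly one of which, say $x_vx_w$, carries the extra power of $x_v$; dropping it leaves $e_1\cdots e_k$ dividing $g/x_w$, so $g/x_w\in I^k$ and hence $g\in I^k$.

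For the lower bound $\mathrm{v}(I^{k+1})\ge\mathrm{v}(I^k)+1$, I would start from such a minimal witness $g$ and produce a witness for $I^k$ of degree $\deg g-1$. The construction I expect to work is $f:=g/x_v$ for a suitably chosen variable $x_v\in\mathfrak q$ that divides $g$. The goal is to prove $(I^k:f)=\mathfrak q$, which then forces $\mathfrak q\in\mathrm{Ass}(S/I^k)$ and yields $\mathrm{v}(I^k)\le\deg f=\deg g-1$. For $\mathfrak q\subseteq(I^k:f)$ one takes $x_c\in\mathfrak q$, uses $gx_c=x_vfx_c\in I^{k+1}$, and peels off an edge through $x_v$ to land $fx_c$ in $I^k$; for the reverse inclusion one checks that if $fx_c\in I^k$ then $x_v$ completes an edge, so that $x_vfx_c=gx_c\in I^{k+1}$, i.e. $x_c\in\mathfrak q$. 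A worked check on $I=I(L_4)$ (where $g=x_2x_3x_4$, $x_v=x_3$, $f=x_2x_4$, and $\mathfrak q=(x_1,x_3)$) confirms the shape of the argument.

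For the upper bound $\mathrm{v}(I^{k+1})\le\mathrm{v}(I^k)+2$, I would run the construction in reverse. Let $f$ be a minimal witness for $I^k$, $(I^k:f)=\mathfrak p$ with $\deg f=\mathrm{v}(I^k)$, and set $g:=fe$ for a single edge $e=x_ax_b$ of $G$. The inclusion $\mathfrak p\subseteq(I^{k+1}:g)$ is immediate, since for $x_c\in\mathfrak p$ we have $fx_c\in I^k$ and thus $gx_c=(fx_c)e\in I^k\cdot I=I^{k+1}$. It then suffices to choose the edge $e$ so that $g\notin I^{k+1}$ and $(I^{k+1}:g)\subseteq\mathfrak p$; granting this, $(I^{k+1}:g)=\mathfrak p\in\mathrm{Ass}(S/I^{k+1})$ and $\mathrm{v}(I^{k+1})\le\deg g=\mathrm{v}(I^k)+2$. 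In the $I(L_4)$ example the edge $e=x_3x_4$ does the job, giving $g=x_2x_3x_4$.

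The hard part in both directions is the exactness of the colon ideals rather than the easy inclusions, together with the combinatorial choices — a variable $x_v\in\mathfrak q\cap\mathrm{supp}(g)$ in the lower bound and an edge $e$ in the upper bound — for which a good selection must be shown to exist. The natural tool is the even-connection description of such colon ideals: Lemma \ref{H} identifies $(I(G)^{[s+1]}:\mu)$ with the edge ideal $I(H)$ of an explicit graph $H$ built from ordinary and even connections, and the analogous statement for ordinary powers shows that $(I^{k+1}:g)$ is generated in degree at most $2$. This structure is exactly what is needed to certify that the candidate colons are prime, to identify them with a minimal vertex cover (hence an associated prime), and to guarantee $g\notin I^{k+1}$; I expect the main technical work to lie in transporting Lemma \ref{H} from square-free to ordinary powers and in pinning down the vertex $x_v$ and the edge $e$ from the even-connection graph.
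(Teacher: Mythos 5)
Your high-level strategy (manipulate a minimal witness, multiplying by an edge for the upper bound and dividing by a vertex for the lower bound, with even-connection machinery certifying the colons) is the same as the paper's, and your preliminary cancellation argument that a minimal witness $g$ of $I^{k+1}$ lies in $I^k$ is sound. But both halves have genuine gaps, and the lower-bound half contains a step that actually fails. For the lower bound you aim to find $x_v\in\mathfrak q\cap\mathrm{supp}(g)$ with $(I^k:g/x_v)=\mathfrak q$, and you correctly note this would force $\mathfrak q\in\mathrm{Ass}(S/I^k)$. That forced conclusion is false in general, so no choice of $x_v$ can work: one has $\mathrm{Ass}(I^k)\subseteq\mathrm{Ass}(I^{k+1})$ for edge ideals, but the reverse inclusion fails. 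Take $G=K_3$, $I=(x_1x_2,x_1x_3,x_2x_3)$, $k=1$. One checks $\mathrm{v}(I^2)=3$ and that $g=x_1x_2x_3$ is a legitimate minimal witness, with $(I^2:g)=\mathfrak{m}=(x_1,x_2,x_3)$ an embedded prime of $I^2$; since $I$ is radical, $\mathrm{Ass}(I)$ consists only of the covers $(x_i,x_j)$, so no monomial $f$ of any degree satisfies $(I:f)=\mathfrak{m}$, and indeed every quotient $g/x_v$ is an edge of the triangle, hence lies in $I$ and gives the unit ideal as colon. So the "suitable choice" you defer to the even-connection graph does not exist; any correct argument must allow the target prime to change on descent (here $g/(x_1x_2)=x_3$ gives $(I:x_3)=(x_1,x_2)\in\mathrm{Ass}(I)$, a different prime). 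This is also why the paper divides by a whole edge or by a path-vertex according to a case analysis rather than by a vertex uniformly; your worked check on $I(L_4)$ is bipartite, where powers have no embedded primes, which is exactly why the difficulty is invisible there.

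For the upper bound, your reduction "it suffices to choose the edge $e$ so that $g\notin I^{k+1}$ and $(I^{k+1}:g)\subseteq\mathfrak p$; granting this\ldots" concedes the entire content of the inequality: no mechanism for producing $e$ is given, and hoping that an ordinary-power analogue of Lemma \ref{H} will pin it down is a plan, not a proof. The paper's mechanism is short and avoids even-connections entirely: using $I^k=(I^{k+1}:I)$, one writes
$$\mathfrak p=(I^k:f)=(I^{k+1}:If)=\bigcap_{i=1}^{m}(I^{k+1}:\mu_i f),$$
where $\mu_1,\dots,\mu_m$ are the edge generators; every term of the intersection contains $\mathfrak p$, and a prime ideal equal to a finite intersection of ideals must equal one of them, so $\mathfrak p=(I^{k+1}:\mu_i f)$ for some $i$, which is then automatically an associated prime of $I^{k+1}$, giving $\mathrm{v}(I^{k+1})\le\deg f+2$. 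This primality-of-intersections trick is the key idea your proposal is missing; without it (or a worked-out substitute), the upper bound remains unproven.
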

\begin{proof}
    First, we prove $\mathrm{v}(I^k) + 2 \geq \mathrm{v}(I^{k+1})$. Let $f$ be the monomial corresponding to the $\mathrm{v}$-$\mathrm{number}$ of the ideal $I^k$. Then, there exists an associated prime ideal $\mathfrak{p}$ of $I^k$ such that $(I^k : f) = \mathfrak{p}$. Without loss of generality, let the generators of the edge ideal $I$ be $\mu_1, \mu_2,\dots, \mu_m$. Then, 
    $$\mathfrak{p} = (I^k : f) = ((I^{k+1} : I) : f) = (I^{k+1} : If) = \mathrel{\mathop{\cap}\limits_{\substack{i=1}{}}^{\substack{m}{}}} (I^{k+1} : \mu_i f), $$
    where there exists some $i (1\leq i\leq m)$ such that $\mathfrak{p} = (I^{k+1} : \mu_i f)$. Since $\mathfrak{p} \in \mathrm{Ass}(I^k) \subset \mathrm{Ass}(I^{k+1})$, we have 
    $$\mathrm{v}(I^k) + 2 \geq \mathrm{v}(I^{k+1}).$$

    Next, we prove \( \mathrm{v}(I^{k+1}) \geq \mathrm{v}(I^k) + 1 \). Let \( g \) be the monomial corresponding to the \(\mathrm{v}\)-\(\mathrm{number}\) of the ideal \( I^{k+1} \). Then, there exists an associated prime ideal \( \mathfrak{q} \) of \( I^{k+1} \) such that \( (I^{k+1} : g) = \mathfrak{q} \). Therefore,
    \[
    g \in (I^{k+1} : \mathfrak{q}) \setminus I^{k+1}, \quad g \in I^k,
    \]
    and for any \( x \in \mathfrak{q} \), we have \( gx \in I^{k+1} \). Let \( g = \mu_1 \mu_2 \dots \mu_i \dots \mu_r v \), where \( \mu_i \) (\( 1 \leq i \leq r \leq k \)) are generators of the edge ideal \( I \). Consider the following two cases: 

    \textbf{Case 1:} If for any \( x \in \mathfrak{q} \), \( x \) is directly connected to some vertex in \( v \), then take \( g' = \frac{g}{\mu_1} \); 

    \textbf{Case 2:} If there exists some \( x_i \in \mathfrak{q} \) that is connected to an element \( v_i \) in \( v \) through \( \mu_{i1}, \dots, \mu_{is} \) (\( s \geq 1 \)), and let \( y \) be the vertex in \( \mu_{i1}, \dots, \mu_{is} \) connected to \( v \), then take \( g' = \frac{g}{y} \).

    For \( g' \) in both cases, we have: for any \( x \in \mathfrak{q} \), \( xg' \in I^k \). Therefore,
    \[
    g' \in (I^k : \mathfrak{q}) \setminus I^k, \quad g' \in I^{k-1}.
    \]
    Additionally, for any \( x \notin \mathfrak{q} \), if \( x \in (I^k : g') \), then \( x \) is connected to the vertices in \( g' \) through an even path, and thus \( x \) is connected to the vertices in \( g \) through an even path, i.e., \( xg \in I^{k+1} \). Therefore, \( x \in \mathfrak{q} \). In conclusion, \( \mathfrak{q} = (I^k : g') \), and thus
    \[
    \mathrm{v}(I^{k+1}) \geq \mathrm{deg}(g') + 1 \geq \mathrm{v}_{\mathfrak{q}}(I^k) + 1 \geq \mathrm{v}(I^k) + 1.
    \]

    In summary, \( \mathrm{v}(I^k) + 2 \geq \mathrm{v}(I^{k+1}) \geq \mathrm{v}(I^k) + 1 \).
\end{proof}

\begin{Theorem}
\label{I^k+2}
Let $I$ be a homogeneous monomial ideal of degree $d$ in a Noetherian integral domain $S$. Then, for $k\gg 0$, we have 
$$\mathrm{v}(I^{k+1}) = \mathrm{v}(I^k) + d.$$
\end{Theorem}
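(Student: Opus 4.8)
The plan is to combine the asymptotic linearity of the v-number of powers with the structural result in Lemma \ref{inftyprime}. The key observation is that for $k \gg 0$, the set of associated primes stabilizes: there is a finite stable set $\mathrm{Ass}^{\infty}(I)$ such that $\mathrm{Ass}(I^k) = \mathrm{Ass}^{\infty}(I)$ for all large $k$. This is the starting point, and it lets us work with a fixed collection of primes across consecutive powers.

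First I would fix a large $k$ and a prime $\mathfrak{p} \in \mathrm{Ass}^{\infty}(I)$ achieving the v-number, so that $\mathrm{v}(I^k) = \alpha\bigl((I^k : \mathfrak{p})/I^k\bigr)$. By Lemma \ref{inftyprime}, for $k \gg 0$ we have the identity $(I^{k+1} : \mathfrak{p}) = I\,(I^k : \mathfrak{p})$. Taking the initial degree of both sides, and using that $I$ is generated in degree $d$, the generator of minimal degree of $(I^{k+1}:\mathfrak{p})$ that lies outside $I^{k+1}$ should be obtained by multiplying the minimal generator of $(I^k:\mathfrak{p})$ lying outside $I^k$ by a degree-$d$ generator of $I$. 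This would give $\alpha\bigl((I^{k+1}:\mathfrak{p})/I^{k+1}\bigr) = \alpha\bigl((I^k:\mathfrak{p})/I^k\bigr) + d$. The two inequalities combine as
\[
\mathrm{v}(I^{k+1}) \leq \mathrm{v}(I^k) + d, \qquad \mathrm{v}(I^{k+1}) \geq \mathrm{v}(I^k) + d,
\]
where one direction comes from exhibiting a witness of degree $\mathrm{v}(I^k)+d$ for $I^{k+1}$ and the other from pushing a witness for $I^{k+1}$ down to one for $I^k$ by dividing by a generator of $I$.

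The subtle point, and the main obstacle, is ensuring that passing to initial degrees commutes correctly with the ideal identity $(I^{k+1}:\mathfrak{p}) = I(I^k:\mathfrak{p})$ \emph{at the level of the quotients modulo the powers}, i.e.\ relating $\alpha\bigl((I^{k+1}:\mathfrak{p})/I^{k+1}\bigr)$ to $\alpha\bigl((I^k:\mathfrak{p})/I^k\bigr)$. One must verify that an element of minimal degree in $(I^k:\mathfrak{p})\setminus I^k$, when multiplied by a suitable minimal generator of $I$, does not accidentally land inside $I^{k+1}$, and conversely that no lower-degree element of $(I^{k+1}:\mathfrak{p})$ escapes $I^{k+1}$. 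This is precisely where the integral domain hypothesis and the stability of $\mathfrak{p}$ are used: Lemma \ref{inftyprime} guarantees the colon ideals propagate multiplicatively, so degree counts shift by exactly $d$ with no collapse.

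Finally I would argue that the prime $\mathfrak{p}$ realizing the minimum for $I^k$ also realizes it for $I^{k+1}$ once $k$ is large, since the associated primes are the same stable set and each contributes a value shifted by $d$; taking the minimum over this common finite set preserves the shift. Hence $\mathrm{v}(I^{k+1}) = \min_{\mathfrak{p}} \alpha\bigl((I^{k+1}:\mathfrak{p})/I^{k+1}\bigr) = d + \min_{\mathfrak{p}} \alpha\bigl((I^k:\mathfrak{p})/I^k\bigr) = \mathrm{v}(I^k) + d$, completing the proof.
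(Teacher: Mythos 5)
Your overall strategy---stabilize $\mathrm{Ass}(I^k)$, invoke Lemma \ref{inftyprime}, and get the two inequalities by multiplying a minimal element for $I^k$ by a degree-$d$ generator and dividing a minimal element for $I^{k+1}$ by one---is the same as the paper's. But the two steps you lean on are exactly the ones that carry the content, and neither is justified as written. First, you identify $\mathrm{v}(I^j)$ with $\min_{\mathfrak{p}} \alpha\bigl((I^j:\mathfrak{p})/I^j\bigr)$ for $j=k,k+1$. That formula (the cited lemma of Ficarra in the preliminaries) is only valid for ideals with \emph{no embedded associated primes}, which the theorem does not assume; embedded primes are in fact the typical situation for large powers (for $I=(xy,yz,zx)$ the maximal ideal lies in $\mathrm{Ass}(I^k)$ for every $k\geq 2$, so waiting for $k\gg 0$ does not help). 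Without that hypothesis, a minimal-degree element $g$ of $(I^k:\mathfrak{p})\setminus I^k$ only satisfies $\mathfrak{p}\subseteq (I^k:g)$, possibly strictly, so your closing chain of equalities computes shifts of initial degrees, not of v-numbers. The paper avoids this entirely by working with genuine witnesses: starting from $f$ with $(I^{k+1}:f)=\mathfrak{p}$ and factoring $f=\mu g$ with $\mu$ a generator of $I$ (possible because $(I^{k+1}:\mathfrak{p})=I(I^k:\mathfrak{p})$), it closes the circle
\[
\mathfrak{p}\subseteq (I^k:g)\subseteq (\mu I^k:\mu g)=(\mu I^k:f)\subseteq (I^{k+1}:f)=\mathfrak{p},
\]
which forces $(I^k:g)=\mathfrak{p}$ exactly, so $g$ is an honest witness for $I^k$ at $\mathfrak{p}$ even when $\mathfrak{p}$ is embedded. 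This colon-chain argument is the missing ingredient in your lower bound.

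Second, for the upper bound you correctly flag the ``no collapse'' issue---that some product $\mu_j g$ must stay outside $I^{k+1}$---but then assert that Lemma \ref{inftyprime} guarantees it. It does not: that lemma is an identity of colon ideals and says nothing about which elements of $I(I^k:\mathfrak{p})$ avoid $I^{k+1}$. What is actually needed (and what the paper uses implicitly when it writes $\mathfrak{q}=((I^{k+1}:I):g)$) is the Ratliff-type identity $(I^{k+1}:I)=I^k$ for $k\gg 0$ in a Noetherian domain, combined with a primality argument: from
\[
\mathfrak{q}=(I^{k+1}:Ig)=\bigcap_{i=1}^{r}(I^{k+1}:\mu_i g)
\]
and the fact that a prime ideal containing a finite intersection (hence the product) of ideals must contain, hence equal, one of them, one extracts a single generator $\mu_j$ with $(I^{k+1}:\mu_j g)=\mathfrak{q}$, so that $\mu_j g$ is a witness of degree $\mathrm{v}(I^k)+d$. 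Your sketch names the obstacle but supplies neither the Ratliff identity nor the primality selection, so as written both inequalities remain incomplete.
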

\begin{proof}
Let $\mathfrak{p}$ be an associated prime ideal of $I^{k+1}$ such that $(I^{k+1}:f) = \mathfrak{p}$, and $\mathrm{deg}(f) = \mathrm{v}(I^{k+1})$. Since for a graded ideal $I$, when $k\gg 0$, we have $\mathrm{Ass}(I) = \mathrm{Ass}(I^k)$, it follows that $\mathrm{Ass}^{\infty}(I) = \mathrm{Ass}(I)$. By Lemma \ref{inftyprime}, for $k\gg 0$ and any $ \mathfrak{p} \in \mathrm{Ass}(I)$, we have
$$(I^{k+1}:\mathfrak{p}) = I(I^k:\mathfrak{p}). $$

Let \( f = \mu g \), where \( \mu \in I \), and thus \( g \in (I^k : \mathfrak{p}) \setminus I^k \). Therefore,
\[
\mathfrak{p} \subseteq (I^k : g) \subseteq (\mu I^k : \mu g) = (\mu I^k : f) \subseteq (I^{k+1} : f) = \mathfrak{p}.
\]
Hence, \( (I^k : g) = \mathfrak{p} \). Furthermore,
\[
\mathrm{v}(I^{k+1}) = \mathrm{deg}(f) = \mathrm{deg}(g) + \mathrm{deg}(\mu) \geq \mathrm{v}_{\mathfrak{p}}(I^k) + d \geq \mathrm{v}(I^k) + d.
\]

Let \( \mathfrak{q} \) be an associated prime ideal of \( I^k \) such that \( (I^k : g) = \mathfrak{q} \), and \( \mathrm{deg}(g) = \mathrm{v}(I^k) \). Since \( S \) is a Noetherian integral domain, let the generators of \( I \) be \( \mu_1, \mu_2, \dots,\) \( \mu_r \). Then,
\[
\mathfrak{q} = ((I^{k+1} : I) : g) = (I^{k+1} : Ig) = \bigcap_{i=1}^r (I^{k+1} : g \mu_i),
\]
and thus there exists some \( j \) (\( 1 \leq j \leq r \)) such that \( \mathfrak{q} = (I^{k+1} : g \mu_j) \). Since for \( k \gg 0 \), we have \( \mathrm{Ass}(I^k) = \mathrm{Ass}(I^{k+1}) = \mathrm{Ass}^{\infty}(I) \), it follows that
\[
\mathrm{v}(I^{k+1}) \leq \mathrm{v}_{\mathfrak{q}}(I^{k+1}) \leq \mathrm{v}(I^k) + d.
\]

In summary, the result holds. 
\end{proof}

Let \( I(G) \subseteq S = K[x_1, \dots, x_n] \) be the edge ideal of a graph \( G \). According to the new graph \( H \) defined in Lemma \ref{H}: \( V(H) = V(G) \setminus \mathrm{supp}(\mu) \), and \( I(H) = (I(G)^{[k+1]} : \mu) \) is the edge ideal of the graph \( H \), where \( \mu = e_1 \dots e_k \in I(G)^{[k]} \). Here, \( \mathrm{supp}(\mu) \) denotes the support of the monomial \( \mu \), i.e., the set of all variables dividing \( \mu \), and \( I^{[k]} \) denotes the \( k \)-th square-free power of the ideal \( I \), which is generated by all square-free monomials in \( I^k \) (see reference \cite{gu2017regularity}). Then, for any \( 1 \leq k \leq \mathrm{match}(G) \), we have
\[
\mathrm{v}(I(G)^{[k+1]}) \leq \mathrm{v}(I(H)) + 2k,
\]
where \( \mathrm{match}(G) \) denotes the matching number of the graph \( G \), i.e., the size of the maximum matching in \( G \).

\begin{Lemma}[\cite{jaramillo2021v}]
\label{ta}
Let \( I = I(\mathcal{C}) \) be the edge ideal of a Clutter graph \( \mathcal{C} \). If for a monomial \( f \in S_d \) and a prime ideal \( \mathfrak{p} \in \mathrm{Ass}(I) \), we have \( (I : f) = \mathfrak{p} \), then there exists a stable set \( A \in \mathcal{A}_{\mathcal{C}} \) with \( |A| \leq d \) such that
\[
(I : t_A) = (N_{\mathcal{C}}(A)) = \mathfrak{p},
\]
where \( \mathcal{A}_{\mathcal{C}} \) is the collection of all stable sets in the Clutter graph \( \mathcal{C} \) whose neighborhood sets are minimal vertex covers, \( N_{\mathcal{C}}(A) \) is the neighborhood set of the stable set \( A \), and \( t_A \) is the monomial formed by the product of all variables in \( A \).
\end{Lemma}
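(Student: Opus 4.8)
The plan is to reduce the general colon condition $(I:f)=\mathfrak{p}$ to the combinatorial datum of $\supp(f)$, and then to replace the arbitrary monomial $f$ by the squarefree monomial $t_A$ supported on a stable set $A\subseteq\supp(f)$. By the cited structure theorem for $\Ass(I(\mathcal{C}))$ (Villarreal, Lemma 6.3.37 above), the hypothesis $\mathfrak{p}\in\Ass(I)$ means $\mathfrak{p}=(C)$ for some minimal vertex cover $C$ of $\mathcal{C}$. Thus it suffices to produce a stable set $A$ with $N_{\mathcal{C}}(A)=C$, with $|A|\le d$, and with $(I:t_A)=(C)$.

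First I would set $A:=\supp(f)$. Since $(I:f)=\mathfrak{p}\subsetneq S$ we have $f\notin I$, and because $I$ is generated by the (hyper)edges of $\mathcal{C}$ this forces $\supp(f)$ to contain no hyperedge; that is, $A$ is a stable set, and trivially $|A|\le\deg(f)=d$. Next I would compute the degree-one part of the colon ideal: for a variable $x_v$ one has $x_v\in(I:f)$ iff $x_vf\in I$ iff some generator $e$ of $I$ (a hyperedge of $\mathcal{C}$) divides $x_vf$, i.e. $e\subseteq\supp(f)\cup\{v\}$. Since $A=\supp(f)$ contains no hyperedge, this says exactly $v\in N_{\mathcal{C}}(A)$. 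Comparing with the variable generators of $\mathfrak{p}=(C)$ gives $N_{\mathcal{C}}(A)=C$, so $\mathfrak{p}=(N_{\mathcal{C}}(A))$ and $C$ is a minimal vertex cover; in particular $A\in\mathcal{A}_{\mathcal{C}}$.

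Because $t_A$ has the same support as $f$, the displayed characterization applied to $t_A$ immediately gives $(C)\subseteq(I:t_A)$. The hard part will be the reverse inclusion $(I:t_A)\subseteq(C)$, i.e. ruling out any spurious higher-degree generator of the colon ideal. Given a monomial $m$ with $m\,t_A\in I$, I would pick a generator $e$ of $I$ dividing $m\,t_A$, so that $e\subseteq\supp(m)\cup A$. Since $C$ is a vertex cover we have $e\cap C\ne\varnothing$, and since a stable set is disjoint from its own neighbor set we have $A\cap C=A\cap N_{\mathcal{C}}(A)=\varnothing$; together these force the chosen vertex of $e\cap C$ to lie in $\supp(m)$, whence $m\in(C)$.

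Combining the two inclusions yields $(I:t_A)=(C)=\mathfrak{p}$, and since $A$ is stable with $N_{\mathcal{C}}(A)=C$ a minimal vertex cover and $|A|\le d$, all the asserted properties hold. I expect the disjointness-plus-covering argument in the last paragraph to be the only genuine obstacle: it is precisely this step that guarantees no extra monomial enters $(I:t_A)$, so that the colon ideal collapses exactly onto the prime $\mathfrak{p}$ rather than merely containing it.
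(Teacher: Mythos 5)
Your proof is correct. Note that the paper states this lemma as a citation from \cite{jaramillo2021v} and gives no proof of its own, so there is no internal argument to compare against; what you wrote is essentially the argument of the cited source: take $A=\mathrm{supp}(f)$, get stability from $f\notin I$, identify the degree-one part of $(I:f)$ with $N_{\mathcal{C}}(A)$, and use the correspondence between associated primes of $I(\mathcal{C})$ and minimal vertex covers to conclude $N_{\mathcal{C}}(A)=C$ and $A\in\mathcal{A}_{\mathcal{C}}$. One remark: the step you single out as the only genuine obstacle is actually automatic. For a squarefree monomial ideal $I$, a squarefree generator $g$ divides a monomial $mf$ if and only if $\mathrm{supp}(g)\subseteq \mathrm{supp}(m)\cup\mathrm{supp}(f)$, so the colon ideal $(I:f)$ depends only on $\mathrm{supp}(f)$; since $t_A$ and $f$ have the same support, $(I:t_A)=(I:f)=\mathfrak{p}$ holds immediately, with no need for a separate two-inclusion argument. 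Your direct verification of $(I:t_A)\subseteq (C)$ via the covering-plus-disjointness argument (any hyperedge inside $\mathrm{supp}(m)\cup A$ must meet $C$, and $A\cap C=\varnothing$ forces that vertex into $\mathrm{supp}(m)$) is nevertheless correct, just longer than necessary.
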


\begin{Theorem}
\label{I^[k]}
Let \( I = I(L_n) \) be the edge ideal of the path graph \( L_n \). For any \( 1 \leq k \leq \mathrm{match}(L_n) \), we have:
\begin{itemize}
    \item[(1)] When \( k \) is odd,
    \[
    \mathrm{v}(I^{[k]}) = 
    \begin{cases}
        [\frac{n}{4}] + \frac{3(k-1)}{2}, & \text{if } n \equiv 0,1 \ (\mathrm{mod} \ 4) \\
        [\frac{n}{4}] + \frac{3(k-1)}{2} + 1, & \text{if } n \equiv 2,3 \ (\mathrm{mod} \ 4).
    \end{cases}
    \]
    \item[(2)] When \( k \) is even,
    \[
    \mathrm{v}(I^{[k]}) = [\frac{n}{4}] + \frac{3k}{2} - 1.
    \]
\end{itemize}
\end{Theorem}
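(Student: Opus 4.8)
The plan is to reinterpret $I(L_n)^{[k]}$ as the edge ideal of a clutter and then run the stable-set machinery of Lemma \ref{stable} and Lemma \ref{ta}. Every minimal generator of $I(L_n)^{[k]}$ is a product of $k$ pairwise disjoint edges, so the generators all have degree $2k$ and none divides another; hence $I(L_n)^{[k]} = I(\mathcal{C}_k)$, where $\mathcal{C}_k$ is the clutter on $V(L_n)$ whose hyperedges are the vertex sets of the $k$-matchings of $L_n$. A subset $A$ is stable in $\mathcal{C}_k$ exactly when the induced subgraph $L_n[A]$ has matching number at most $k-1$, and $v \in N_{\mathcal{C}_k}(A)$ exactly when $t_A x_v \in I(L_n)^{[k]}$, i.e. adding $v$ to $A$ creates a $k$-matching. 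By Lemma \ref{stable} it then suffices to minimize $|A|$ over stable sets $A$ with $N_{\mathcal{C}_k}(A)$ a minimal vertex cover, and Lemma \ref{ta} guarantees the optimal witness may be taken of this square-free form, removing any concern about non-square-free $f$.

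For the upper bound I would fix $\mu = (x_1x_2)(x_3x_4)\cdots(x_{2k-3}x_{2k-2}) \in I(L_n)^{[k-1]}$ and apply Lemma \ref{H} with $s=k-1$ (legitimate since $k\le \mathrm{match}(L_n)$) to obtain $(I(L_n)^{[k]}:\mu)=I(H)$. The key verification is that $H\cong L_{n-2(k-1)}$: the surviving vertices $x_{2k-1},\dots,x_n$ form a sub-path, and one checks that no even-connection through the deleted block $x_1,\dots,x_{2k-2}$ produces a spurious edge among them, so $H$ carries exactly the path edges. Taking $A'$ to be an optimal stable set realizing $\mathrm{v}(I(L_{n-2(k-1)}))$ on this tail gives $(I(L_n)^{[k]}:\mu\,t_{A'})=(I(H):t_{A'})=N_H(A')$, which is a monomial prime; since $\mu\,t_{A'}\notin I(L_n)^{[k]}$ (its support has matching number $k-1$) this prime lies in $\mathrm{Ass}(I(L_n)^{[k]})$, whence $\mathrm{v}(I(L_n)^{[k]})\le 2(k-1)+\mathrm{v}(I(L_{n-2(k-1)}))$. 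A direct case analysis on $k \bmod 2$ and $n\bmod 4$, feeding Lemma \ref{line}, shows the right-hand side equals the claimed closed form in every case.

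The lower bound is the crux. I would prove the matching inequality $\mathrm{v}(I(L_n)^{[k]}) \ge 2 + \mathrm{v}(I(L_{n-2})^{[k-1]})$ and induct on $k$, the base case $k=1$ being Lemma \ref{line}. Given an optimal stable set $A$ for $\mathcal{C}_k$ with $N_{\mathcal{C}_k}(A)$ a minimal vertex cover, the goal is to locate near an endpoint an edge contained in $A$ whose two vertices can be stripped off so that the remainder is a stable set for $\mathcal{C}_{k-1}$ on the shortened path $L_{n-2}$, giving $|A|\ge 2+\mathrm{v}(I(L_{n-2})^{[k-1]})$. The delicate points are (i) showing an optimal $A$ must actually contain such a boundary edge rather than distributing its vertices inefficiently, and (ii) controlling the matching number along the seam where the two vertices are deleted, so that both the stability condition and the neighbourhood-cover condition transfer cleanly to $L_{n-2}$; the boundary behaviour at the two ends of the path is precisely what generates the $n\bmod 4$ dichotomy in the odd-$k$ formula. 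I expect this structural reduction, not the arithmetic, to be the main obstacle. Assembling the two bounds and unwinding the recursion against Lemma \ref{line} then yields the two displayed formulas according to the parity of $k$.
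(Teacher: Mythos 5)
Your upper bound is sound, and it is essentially the paper's bound obtained more cleanly: coloning $I^{[k]}$ by an initial $(k-1)$-matching $\mu$ via Lemma \ref{H}, checking $H\cong L_{n-2(k-1)}$, and appending an optimal stable set of $H$ gives $\mathrm{v}(I^{[k]})\leq 2(k-1)+\mathrm{v}(I(L_{n-2(k-1)}))$, exactly what the paper achieves by exhibiting explicit monomials (with the matching placed at the far end of the path instead of the start). The genuine gap is your lower bound. You propose the recursion $\mathrm{v}(I(L_n)^{[k]})\geq 2+\mathrm{v}(I(L_{n-2})^{[k-1]})$ by locating, inside an optimal stable set $A$, an edge \emph{near an endpoint} of the path that can be stripped off; but this boundary-edge claim is precisely what you leave unproven (your ``delicate points (i) and (ii)''), and it is the crux, not a technicality. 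Nothing forces an optimal witness to carry its matching at the boundary: primality of $(I^{[k]}:t_A)$ only yields a $(k-1)$-matching \emph{somewhere} in $\mathrm{supp}(t_A)$, possibly in the middle of the path, and then deleting its two vertices does not leave a configuration supported on $L_{n-2}$, so neither stability nor the minimal-cover property transfers as you need. As written, the induction cannot start its inductive step.

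The paper's lower bound shows how to avoid this entirely, and it is the idea your proposal is missing: do not peel one edge at a time, colon out the whole matching at once, wherever it sits. Since $(I^{[k]}:f)=\mathfrak{p}$ is prime and $f=t_A$ is square-free with $\mathrm{supp}(f)\cap\mathfrak{p}=\varnothing$ (Lemma \ref{ta}), for each variable $x\in\mathfrak{p}$ the square-free monomial $xf$ contains a $k$-matching in its support, and removing $x$ destroys at most one of its edges; hence $\mathrm{supp}(f)$ contains a $(k-1)$-matching $u$, so $f=uf'$. Lemma \ref{H} applied to this $u$ gives $(I^{[k]}:u)=I(H)$ with $H\cong L_{n-2(k-1)}$ --- and this isomorphism holds for \emph{any} $(k-1)$-matching of a path, interior or not, because the even-connections of Lemma \ref{H} simply rejoin the blocks of surviving vertices into a shorter path (this is the same verification you already sketched for the initial matching, just in general position). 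Then $\mathfrak{p}=(I^{[k]}:f)=(I(H):f')$ is prime, hence an associated prime of $I(H)$, so $\deg(f')\geq\mathrm{v}(I(L_{n-2(k-1)}))$ and $\mathrm{v}(I^{[k]})=\deg(f')+2(k-1)\geq\mathrm{v}(I(L_{n-2(k-1)}))+2(k-1)$ in a single step, with no induction on $k$ and no claim about where the matching lives. The parity/mod-4 case analysis against Lemma \ref{line} then closes both bounds exactly as you anticipated.
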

\begin{proof}
Let \( f \) be the monomial corresponding to the \(\mathrm{v}\)-\(\mathrm{number}\) of \( I^{[k]} \), i.e., \( \mathrm{v}(I^{[k]}) = \mathrm{deg}(f) \) and \( (I^{[k]} : f) = \mathfrak{p} = \langle x_{i_1}, \dots, x_{i_r} \rangle \), where \( \mathfrak{p} \in \mathrm{Ass}(I^{[k]}) \). The ideal \( I^{[k]} \) can be viewed as the edge ideal of a Clutter graph \( \mathcal{C} \). By Lemma \ref{ta}, there exists a stable set \( A \) of \( \mathcal{C} \) such that \( t_A = f \) and \( N_{\mathcal{C}}(A) = \mathfrak{p} \). Combining the definition of the quotient ideal, \( f \) must contain at least \( k-1 \) distinct generators of \( I \); otherwise, it would not yield a single element in the associated prime ideal \( \mathfrak{p} \). Without loss of generality, let the generators of the edge ideal of \( L_n \) be \( \mu_1, \dots, \mu_{n-1} \), where \( \mu_i = x_i x_{i+1} \) (\( 1 \leq i \leq n-1 \)). Let \( f = f' u \), where \( u \) is the product of any \( k-1 \) distinct generators of \( L_n \).

By Lemma \ref{H}, we have \( (I^{[k]} : u) = I(H) \), and the graph \( H \) is isomorphic to \( L_{n-2(k-1)} \). Then,
    \[
    (I^{[k]} : f) = (I^{[k]} : f' u) = (I^{[k]} : u) : f' = (I(H) : f') = \mathfrak{p},
    \]
where the generating set of \( \mathfrak{p} \) is a minimal vertex cover of the Clutter graph \( \mathcal{C} \) and thus also a minimal vertex cover of \( H \). Therefore, \( \mathfrak{p} \) is an associated prime ideal of \( I(H) \). Hence,
    \[
    \mathrm{deg}(f') \geq \mathrm{v}(I(H)) = \mathrm{v}(I(L_{n-2(k-1)})).
    \]
Combining with Lemma \ref{line}, we obtain
    \[
    \mathrm{v}(I^{[k]}) = \mathrm{deg}(f) = \mathrm{deg}(f') + 2(k-1),
    \]
    hence,
    \[
    \mathrm{v}(I^{[k]}) \geq
    \begin{cases}
        [\frac{n-2(k-1)}{4}], & \text{if } (n-2(k-1)) \equiv 0,1 \ (\mathrm{mod} \ 4) \\
        [\frac{n-2(k-1)}{4}] + 1, & \text{if } (n-2(k-1)) \equiv 2,3 \ (\mathrm{mod} \ 4).
    \end{cases}
    \]
    
Next, we simplify the above results by discussing \( k-1 \).

\textbf{Case 1.} When \( k-1 \) is even, based on \( n-2(k-1) \equiv 0,1 \ (\mathrm{mod} \ 4) \), we have
    \[
    [\frac{n-2(k-1)}{4}] = [\frac{n}{4}] - \frac{k-1}{2} + 2(k-1) = [\frac{n}{4}] + \frac{3(k-1)}{2}.
    \]
Here, \( n \equiv 0,1 \ (\mathrm{mod} \ 4) \). Similarly, when \( n-2(k-1) \equiv 2,3 \ (\mathrm{mod} \ 4) \), we have
    \[
    [\frac{n-2(k-1)}{4}]+ 1 = [\frac{n}{4}] + \frac{3(k-1)}{2} + 1.
    \]
Here, \( n \equiv 2,3 \ (\mathrm{mod} \ 4) \).
    
\textbf{Case 2.} When \( k-1 \) is odd, based on \( n-2(k-1) \equiv 0,1 \ (\mathrm{mod} \ 4) \), we have
    \[
    [\frac{n-2(k-1)}{4}] = [\frac{n}{4}] - \frac{2(k-1)-2}{4} + 2(k-1) = [\frac{n}{4}] + \frac{3k}{2} - 1.
    \]
Here, \( n \equiv 2,3 \ (\mathrm{mod} \ 4) \). Similarly, when \( n-2(k-1) \equiv 2,3 \ (\mathrm{mod} \ 4) \), we have
    \[
    [\frac{n-2(k-1)}{4}] + 1 = [\frac{n}{4}] - \frac{2(k-1)+2}{4} + 1 = [\frac{n}{4}] + \frac{3k}{2} - 1.
    \]
Here, \( n \equiv 0,1 \ (\mathrm{mod} \ 4) \).
    
From the above discussion, we obtain:
    \begin{itemize}
        \item[(1)] When \( k \) is odd,
         \[
         \mathrm{v}(I^{[k]}) \geq
        \begin{cases}
            [\frac{n}{4}] + \frac{3(k-1)}{2}, & \text{if } n \equiv 0,1 \ (\mathrm{mod} \ 4) \\
            [\frac{n}{4}] + \frac{3(k-1)}{2} + 1, & \text{if } n \equiv 2,3 \ (\mathrm{mod} \ 4).
        \end{cases}
        \]
        \item[(2)] When \( k \) is even,
        \[
        \mathrm{v}(I^{[k]}) \geq [\frac{n}{4}] + \frac{3k}{2} - 1.
        \]
    \end{itemize}
    
Next, we prove the reverse inequality. Here, we only prove the case when \( k \) is odd; the even case can be proved similarly.
    
    When \( n \equiv 0,1 \ (\mathrm{mod} \ 4) \), let \( \frac{n}{4} = m \). We can choose the monomial \( f \) corresponding to \( \mathrm{v}(I^{[k]}) \) as
    \[
    f = x_3 x_7 \dots x_{4m-1-4(\frac{k-1}{2})} x_{4m-4(\frac{k-1}{2})} x_{4m-4(\frac{k-1}{2})+1} \dots x_{4m-2} x_{4m-1},
    \]
    satisfying \( (I^{[k]} : f) = \mathfrak{p} = \langle x_2, x_4, x_6, \dots, x_{4m-2-4(\frac{k-1}{2})}, x_{4m} \rangle \in \mathrm{Ass}(I^{[k]}) \), and
    \[
    \mathrm{deg}(f) = [\frac{n}{4}] + \frac{3(k-1)}{2},
    \]
which is the desired monomial.
    
When \( n \equiv 2,3 \ (\mathrm{mod} \ 4) \), we can choose the monomial \( f \) corresponding to \( \mathrm{v}(I^{[k]}) \) as
    \[
    f = x_3 x_7 \dots x_{4m-1-4(\frac{k-2}{2})} x_{4m-4(\frac{k-2}{2})} x_{4m-4(\frac{k-2}{2})+1} \dots x_{4m} x_{4m+1},
    \]
satisfying \( (I^{[k]} : f) = \mathfrak{p}' = \langle x_2, x_4, x_6, \dots, x_{4m-2-4(\frac{k-2}{2})}, x_{4m+2} \rangle \in \mathrm{Ass}(I^{[k]}) \), and
    \[
    \mathrm{deg}(f) = [\frac{n}{4}] + \frac{3(k-1)}{2} + 1,
    \]
which is the desired monomial.
    
In summary, the result holds.
\end{proof}

\begin{Conjecture}
    Let \( I = I(L_n) \) be the edge ideal of the path graph \( L_n \). For any \( 1 \leq k \leq \mathrm{match}(L_n) \), we have:
    $$\mathrm{v}(I^k) = \mathrm{v}(I^{[k]}).$$
\end{Conjecture}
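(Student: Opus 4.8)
The plan is to reduce the conjecture to an explicit computation of $\mathrm{v}(I^k)$ and then compare with Theorem \ref{I^[k]}. Since $\mathrm{v}(I^{[k]})$ is already given there in closed form, it suffices to establish the very same closed form for $\mathrm{v}(I^k)$; the two expressions would then coincide term by term. Throughout I would use that $L_n$ is bipartite, so by \cite{simis1994ideal} the ideal $I^k$ has no embedded associated primes, whence $\mathrm{Ass}(I^k)=\mathrm{Min}(I)$ consists exactly of the minimal vertex covers of $L_n$ and $\mathrm{v}(I^k)=\min_{\mathfrak{p}}\alpha((I^k:\mathfrak{p})/I^k)$, ranging over the same primes that control $I^{[k]}$.

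For the upper bound $\mathrm{v}(I^k)\le \mathrm{v}(I^{[k]})$ I would reuse the explicit square-free monomials $f$ constructed in the proof of Theorem \ref{I^[k]}, which satisfy $(I^{[k]}:f)=\mathfrak{p}$ for a suitable minimal vertex cover $\mathfrak{p}$. Because $I^{[k]}\subseteq I^k$, one immediately gets $\mathfrak{p}\subseteq (I^k:f)$, so the only thing to check is that the colon does not grow, i.e. $(I^k:f)\subseteq\mathfrak{p}$. As everything is monomial, this reduces to showing that for every variable $x_\ell\notin\mathfrak{p}$ (and, more generally, every monomial supported off $\mathfrak{p}$) the product $x_\ell f$ is not divisible by any product of $k$ edges of $L_n$. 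The monomial $f$ is built from a sparse stable-set part spaced by $4$ together with a consecutive block encoding $k-1$ edges, so this is a finite, local incidence check along the path. Once $(I^k:f)=\mathfrak{p}$ is confirmed, $\mathfrak{p}\in\mathrm{Ass}(I^k)$ is automatic and $\mathrm{v}(I^k)\le\deg f=\mathrm{v}(I^{[k]})$.

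For the lower bound $\mathrm{v}(I^k)\ge \mathrm{v}(I^{[k]})$ I would induct on $k$ using Theorem \ref{IGk}, which gives $\mathrm{v}(I^k)+1\le \mathrm{v}(I^{k+1})\le \mathrm{v}(I^k)+2$. Writing $v_k=\mathrm{v}(I^k)$ and $w_k=\mathrm{v}(I^{[k]})$, the increments $w_{k+1}-w_k$ read off from Theorem \ref{I^[k]} alternate between $1$ and $2$ according to the parity of $k$ and the residue of $n$ modulo $4$. At every step where $w_{k+1}-w_k=1$, the induction hypothesis $v_k=w_k$ together with $v_{k+1}\ge v_k+1$ and the upper bound $v_{k+1}\le w_{k+1}$ already force $v_{k+1}=w_{k+1}$ for free. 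The entire difficulty is therefore concentrated in the steps where $w_{k+1}-w_k=2$, where I must upgrade the generic bound $v_{k+1}\ge v_k+1$ to $v_{k+1}\ge v_k+2$.

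The main obstacle is precisely this $+2$ increment. In the square-free setting the proof of Theorem \ref{I^[k]} exploited Lemma \ref{H}, which lets one colon out a product of edges $\mu$ and literally \emph{delete} its support, reducing to the shorter path $L_{n-2(k-1)}$ and to Lemma \ref{line}. For ordinary powers no such clean reduction is available: by the even-connection mechanism underlying Lemma \ref{H}, the colon $(I^{k+1}:\mu)$ is the edge ideal of a graph on the \emph{same} vertex set $V(L_n)$ in which the vertices of $\mu$ survive and new even-connection edges appear. To force the jump I would take a monomial $g$ with $(I^{k+1}:g)=\mathfrak{q}$ prime, extract a maximal product of edges $\mu\mid g$, and argue via this even-connection structure that the residual factor $g/\mu$ must itself attain (at least) the v-number of the corresponding reduced graph, ruling out any degree saving from repeated edges or non-square-free factors. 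Controlling these even-connections along $L_n$ — and in particular proving that repetition never lowers the attainable degree at the critical parity — is the step I expect to be genuinely hard, and is the reason the statement is posed here as a conjecture rather than a theorem.
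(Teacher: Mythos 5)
This statement is posed in the paper as a Conjecture with no proof supplied, so there is no paper argument to compare against; the only question is whether your proposal settles it, and it does not. Beyond the gap you concede yourself, one concrete step of your plan is false as stated. For the upper bound you claim that, taking the witness monomial $f$ from the proof of Theorem \ref{I^[k]} with $(I^{[k]}:f)=\mathfrak{p}$, ``the only thing to check'' is that $(I^k:f)\subseteq\mathfrak{p}$. That containment fails in general: for $L_8$ and $k=3$ the paper's witness is $f=x_3x_4x_5x_6x_7$ with $\mathfrak{p}=(x_2,x_8)$, but
$$x_4f=(x_3x_4)(x_4x_5)(x_6x_7)\in I^3,$$
so $x_4\in(I^3:f)\setminus\mathfrak{p}$; the consecutive block in $f$ always admits such adjacent-edge, non-square-free factorizations, which is precisely the difference between $I^k$ and $I^{[k]}$. (In this example one checks $(I^3:f)=(x_2,x_4,x_6,x_8)$, which happens to be a minimal vertex cover of $L_8$ and hence still an associated prime of $I^3$, so the upper bound is plausibly salvageable --- but the statement to prove is that $(I^k:f)$ equals \emph{some} minimal vertex cover of $L_n$, not that it equals $\mathfrak{p}$.) Relatedly, your assertion that the minimum in $\mathrm{v}(I^k)=\min_{\mathfrak{p}}\alpha((I^k:\mathfrak{p})/I^k)$ ranges ``over the same primes that control $I^{[k]}$'' is wrong: $\mathrm{Ass}(I^k)$ consists of the minimal vertex covers of $L_n$, whereas $\mathrm{Ass}(I^{[k]})$ consists of minimal vertex covers of the clutter of $k$-matchings of $L_n$; the prime $(x_2,x_8)$ above lies in the latter set but is not even a vertex cover of $L_8$.

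The decisive gap, however, is the one you flag yourself: at every step where Theorem \ref{I^[k]} forces $\mathrm{v}(I^{[k+1]})-\mathrm{v}(I^{[k]})=2$ (which happens at alternating values of $k$, the phase depending on $n \bmod 4$), you must improve the general bound $\mathrm{v}(I^{k+1})\ge\mathrm{v}(I^k)+1$ of Theorem \ref{IGk} to $\mathrm{v}(I^{k+1})\ge\mathrm{v}(I^k)+2$, and your proposal offers only the hope that an even-connection analysis will do this. As you correctly observe, Lemma \ref{H} has no analogue for ordinary powers that deletes $\mathrm{supp}(\mu)$: the colon $(I^{k+1}:e_1\cdots e_k)$ is an edge ideal on all of $V(L_n)$ with additional even-connection edges, so the reduction to $L_{n-2k}$ and Lemma \ref{line} that drives Theorem \ref{I^[k]} is unavailable, and no substitute is given. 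Since this parity-critical lower bound is the entire content of the conjecture, your proposal is a program rather than a proof --- consistent with the statement remaining open in the paper.
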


\begin{Proposition}
    Let \( G \) be a forest graph with \( n \) vertices, and let \( I(G) \) be its edge ideal. Then, for any \( 1 \leq k \leq \mathrm{match}(G) \), we have \( \mathrm{v}(I(G)^{[k]}) = 2k - 1 \).
\end{Proposition}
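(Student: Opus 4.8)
The plan is to prove the two inequalities $\mathrm{v}(I(G)^{[k]}) \ge 2k-1$ and $\mathrm{v}(I(G)^{[k]}) \le 2k-1$ separately. Throughout I use that $I(G)^{[k]}$ is generated by the square-free monomials $e_{i_1}\cdots e_{i_k}$ attached to the $k$-matchings of $G$, so that every minimal generator has degree $2k$ and $\alpha(I(G)^{[k]}) = 2k$.

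For the lower bound, let $f$ realize the v-number, with $(I(G)^{[k]}:f) = \mathfrak{p} \in \mathrm{Ass}(I(G)^{[k]})$ and $f \notin I(G)^{[k]}$. Pick any variable $x_j \in \mathfrak{p}$. Then $x_j f \in I(G)^{[k]}$, so $x_j f$ is divisible by some $k$-matching generator $m$; since $f \notin I(G)^{[k]}$ we must have $x_j \mid m$, whence $m/x_j \mid f$ and $\deg f \ge \deg(m/x_j) = 2k-1$. This step is routine: it uses only the definition of the v-number and the fact that the generators have degree $2k$, and it already gives $\mathrm{v}(I(G)^{[k]}) \ge 2k-1$ for an arbitrary graph.

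For the upper bound I would build an explicit witness of degree $2k-1$ by peeling off a $(k-1)$-matching. Since $k \le \mathrm{match}(G)$ there is a $k$-matching $e_1,\dots,e_k$; set $u = e_1\cdots e_{k-1}$ and let $H$ be the induced forest on $V(G)\setminus\mathrm{supp}(u)$, which still contains the edge $e_k$. By Lemma \ref{H}, $(I(G)^{[k]}:u) = I(H)$. If one can choose $u$ so that $\mathrm{v}(I(H)) = 1$, then there is a single vertex $x$ of $H$ with $(I(H):x) = \mathfrak{q}$ prime; putting $f = u\,x$ gives $\deg f = 2(k-1)+1 = 2k-1$ and, by the colon identity used in the proof of Theorem \ref{I^[k]} together with Lemma \ref{ta}, $(I(G)^{[k]}:f) = ((I(G)^{[k]}:u):x) = (I(H):x) = \mathfrak{q}$. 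As $\mathfrak{q}$ is prime it lies in $\mathrm{Ass}(I(G)^{[k]})$, so $\mathrm{v}(I(G)^{[k]}) \le 2k-1$.

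The main obstacle is exactly the requirement that some residual forest $H$ satisfy $\mathrm{v}(I(H)) = 1$, and here the bare forest hypothesis does not suffice. By Lemma \ref{line}, a path $L_m$ has v-number one only for small $m$, and deleting a $(k-1)$-matching from $L_n$ leaves (a disjoint union whose largest piece is) $L_{n-2(k-1)}$; already $\mathrm{v}(I(L_6)) = 2$, so for long paths every residual forest has v-number at least two, and Theorem \ref{I^[k]} confirms that the true value of $\mathrm{v}(I(L_n)^{[k]})$ exceeds $2k-1$ once $n$ is large. I therefore expect the equality $\mathrm{v}(I(G)^{[k]}) = 2k-1$ to require an additional hypothesis forcing $\mathrm{v}(I(H)) = 1$ for a suitable residual forest $H$ — for instance that the connected components of $G$ be short, or that $k = \mathrm{match}(G)$ with an essentially unique maximum matching (the case where $I(G)^{[k]}$ is principal). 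The crux of any correct proof will be isolating such a condition and verifying it through the reduction of Lemma \ref{H}, with Lemma \ref{forest} as the natural bookkeeping device for an induction on the number of vertices.
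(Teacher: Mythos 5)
Your diagnosis is correct, and the honest answer is that the proposition cannot be proved as stated: it is false. Your lower bound $\mathrm{v}(I(G)^{[k]}) \ge \alpha(I(G)^{[k]}) - 1 = 2k-1$ is sound (and is exactly the final line of the paper's proof). Your obstruction to the upper bound is also real: for $k=1$ the claim asserts $\mathrm{v}(I(G)) = 1$ for every forest $G$, which already contradicts Lemma \ref{line} (for instance $\mathrm{v}(I(L_6)) = \lfloor 6/4 \rfloor + 1 = 2$), and for general $k$ it contradicts Theorem \ref{I^[k]}, since $[\frac{n}{4}] + \frac{3(k-1)}{2}$ grows with $n$ while $2k-1$ does not; a path is a forest, so the hypotheses cannot exclude these examples. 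Your proposed repair (an extra hypothesis forcing some residual forest $H$ from Lemma \ref{H} to satisfy $\mathrm{v}(I(H)) = 1$) is the right kind of fix, and your conditional upper-bound construction $f = u\,x$ with $(I(G)^{[k]}:f) = ((I(G)^{[k]}:u):x) = (I(H):x)$ is valid as far as it goes.

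For comparison, the paper's proof of the upper bound goes through Lemma \ref{forest} rather than Lemma \ref{H}: coloning by the leaf variable $x_n$ gives $(I(G)^{[k]}:x_n) = I(G_1)^{[k]} + x_{n-1}I(G_2)^{[k-1]}$, and the paper then discards the first summand by asserting $I(G_1)^{[k]} \subseteq I(G_1)^{[k-1]} \subseteq x_{n-1}I(G_2)^{[k-1]}$, after which Proposition \ref{IJ} applied to the product $x_{n-1}I(G_2)^{[k-1]}$ yields $\mathrm{v}(I(G)^{[k]}) \le (2k-2)+1$. The second inclusion is false: monomials of $I(G_1)^{[k-1]}$ are supported on the vertex set $[n-1]$ and need not be divisible by $x_{n-1}$ (already for $G = L_4$ and $k=1$ one has $x_1x_2 \in I(G_1)$ but $x_1x_2 \notin (x_3)$). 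Once the summand $I(G_1)^{[k]}$ is retained, the recursion only bounds $\mathrm{v}(I(G)^{[k]})$ in terms of data of $I(G_1)^{[k]}$, which is precisely the non-terminating situation you predicted: the argument closes only when some residual forest has v-number one. In short, your proposal is not missing an idea that the paper supplies; the paper's proof breaks at exactly the step you flagged, and the counterexamples you cite are internal to the paper itself.
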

\begin{proof}
By Lemma \ref{forest}, \( (I(G)^{[k]} : x_n) = I(G_1)^{[k]} + x_{n-1} I(G_2)^{[k-1]} \), where
    \[
    I(G_1)^{[k]} \subseteq I(G_1)^{[k-1]} \subseteq x_{n-1} I(G_2)^{[k-1]}.
    \]
Therefore, \( (I(G)^{[k]} : x_n) = x_{n-1} I(G_2)^{[k-1]} \). Since \( \mathrm{v}(I(G_2)^{[k-1]}) \geq \alpha(I(G_2)^{[k-1]}) - 1 \), \( \mathrm{v}(x_{n-1}) = 0 \), and by Proposition \ref{IJ}, we have
\begin{align*}
    \mathrm{v}(I(G)^{[k]})
    &\leq \mathrm{v}(I(G)^{[k]}:x_n)+1\\
    &\leq \mathrm{min}\{ \alpha(x_{n-1}) + \mathrm{v}(I(G_2)^{[k-1]}), \mathrm{v}(x_{n-1}) +\alpha(I(G_2)^{[k-1]}) \} + 1\\
    &\leq \alpha(I(G_2)^{[k-1]}) + 1\\
    &= 2k-1.
\end{align*}
Additionally, since \( \mathrm{v}(I(G)^{[k]}) \geq \alpha(I(G)^{[k]}) - 1 = 2k - 1 \), the result holds.
\end{proof}
%\begin{align*}
%    \big[\frac{n-2(k-1)}{4}\big] &= [\frac{n}{4}] - \frac{k-1}{2} + 2(k-1) \\
%    &= [\frac{n}{4}] + \frac{3(k-1)}{2}.
%\end{align*}
%\begin{align*}
%    \big[\frac{n-2(k-1)}{4}\big] &= [\frac{n}{4}] - \frac{2(k-1)-2}{4} +2(k-1) \\
%    &= [\frac{n}{4}] + \frac{3k}{2} - 1.
%\end{align*}
%\begin{align*}
%    \big[\frac{n-2(k-1)}{4}\big] + 1 &= [\frac{n}{4}] - \frac{2(k-1)+2}{4} + 1 \\
%    &= [\frac{n}{4}] + \frac{3k}{2} - 1.
%\end{align*}

\subsection{The v-number of symbolic powers of an ideal}

Let \( S = K[x_1, \dots, x_n] \) be a Noetherian commutative ring over a field \( K \), and let \( I \subseteq S \) be a square-free monomial ideal. Let \( \mathrm{Min}(I) \) be the set of minimal associated prime ideals of \( I \). Then, the \( k \)-th symbolic power of \( I \) satisfies
\[
I^{(k)} = \bigcap_{P \in \mathrm{Min}(I)} P^k.
\]

In this section, we primarily use the above result to study the \(\mathrm{v}\)-\(\mathrm{number}\) of the \( k \)-th symbolic power of square-free monomial ideals.

\begin{Proposition}
\label{pp}
    Let \( I \) be a square-free monomial ideal with minimal associated prime ideals \( \mathfrak{p}_i \) (where \( 1 \leq i \leq m \)), and let \( \mathfrak{p} \) be one of them. For \( k \geq 2 \), we have
    \[
    (I^{(k)} : \mathfrak{p}) = \bigcap_{\mathfrak{p}_i \neq \mathfrak{p}} \mathfrak{p}_i^k \cap \mathfrak{p}^{k-1},
    \]
    and\( (I : \mathfrak{p}) = \bigcap_{\mathfrak{p}_i \neq \mathfrak{p}} \mathfrak{p}_i. \)
\end{Proposition}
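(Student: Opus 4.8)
The plan is to reduce everything to computing colon ideals of powers of the monomial primes $\mathfrak{p}_i$, exploiting that the colon operation distributes over intersection: for any ideals $A_j$ and $B$ one has $\left(\bigcap_j A_j : B\right) = \bigcap_j (A_j : B)$. Starting from the symbolic-power formula $I^{(k)} = \bigcap_{i} \mathfrak{p}_i^{k}$ recalled just before the statement, this gives $(I^{(k)} : \mathfrak{p}) = \bigcap_{i} (\mathfrak{p}_i^{k} : \mathfrak{p})$, so it suffices to identify each factor $(\mathfrak{p}_i^{k} : \mathfrak{p})$ and then reassemble.

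To compute a single factor I would further decompose $\mathfrak{p} = (x_{j_1}, \dots, x_{j_s})$ into its variables and use $(\mathfrak{p}_i^k : \mathfrak{p}) = \bigcap_{l} (\mathfrak{p}_i^k : x_{j_l})$. The key observation is that, since $\mathfrak{p}_i$ is a monomial prime, a monomial lies in $\mathfrak{p}_i^{k}$ precisely when the total exponent it carries on the variables of $\mathfrak{p}_i$ is at least $k$; call this the $\mathfrak{p}_i$-degree. Multiplying by a variable $x_{j_l}$ either raises the $\mathfrak{p}_i$-degree by one (if $x_{j_l} \in \mathfrak{p}_i$) or leaves it unchanged (if $x_{j_l} \notin \mathfrak{p}_i$). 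Hence $(\mathfrak{p}_i^{k} : x_{j_l}) = \mathfrak{p}_i^{k-1}$ in the first case and $(\mathfrak{p}_i^{k} : x_{j_l}) = \mathfrak{p}_i^{k}$ in the second.

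With this in hand the two cases fall out. If $\mathfrak{p}_i \neq \mathfrak{p}$, then since distinct minimal primes are incomparable there is a variable of $\mathfrak{p}$ lying outside $\mathfrak{p}_i$; the corresponding factor is $\mathfrak{p}_i^{k}$, and because $\mathfrak{p}_i^{k} \subseteq \mathfrak{p}_i^{k-1}$ this factor dominates the others, so $(\mathfrak{p}_i^{k} : \mathfrak{p}) = \mathfrak{p}_i^{k}$. If $\mathfrak{p}_i = \mathfrak{p}$, every variable of $\mathfrak{p}$ lies in $\mathfrak{p}_i$, each factor equals $\mathfrak{p}^{k-1}$, and so $(\mathfrak{p}^{k} : \mathfrak{p}) = \mathfrak{p}^{k-1}$. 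Reassembling $\bigcap_i (\mathfrak{p}_i^{k} : \mathfrak{p})$ then yields $\bigcap_{\mathfrak{p}_i \neq \mathfrak{p}} \mathfrak{p}_i^{k} \cap \mathfrak{p}^{k-1}$, as claimed. The second identity $(I : \mathfrak{p}) = \bigcap_{\mathfrak{p}_i \neq \mathfrak{p}} \mathfrak{p}_i$ is the specialization $k = 1$ of the same computation, starting from $I = \bigcap_i \mathfrak{p}_i$ and noting that the factor $(\mathfrak{p} : \mathfrak{p}) = S$ simply drops out of the intersection.

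The step I expect to require the most care is the incomparability input: one must verify that for $\mathfrak{p}_i \neq \mathfrak{p}$ there genuinely is a variable in $\mathfrak{p}$ absent from $\mathfrak{p}_i$, which is exactly the assertion that no minimal prime of $I$ contains another, and then confirm that this single factor $\mathfrak{p}_i^{k}$ controls the intersection via $\mathfrak{p}_i^{k} \subseteq \mathfrak{p}_i^{k-1}$. Everything else reduces to routine monomial bookkeeping once the $\mathfrak{p}_i$-degree description of membership in $\mathfrak{p}_i^{k}$ has been set up.
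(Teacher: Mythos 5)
Your proposal is correct and follows essentially the same route as the paper: distribute the colon over the intersection $I^{(k)} = \bigcap_i \mathfrak{p}_i^k$, decompose $\mathfrak{p}$ into its variables, compute each factor $(\mathfrak{p}_i^k : x_{j_l})$, and use incomparability of minimal primes to produce a variable of $\mathfrak{p}$ outside $\mathfrak{p}_i$ when $\mathfrak{p}_i \neq \mathfrak{p}$. In fact you spell out two points the paper leaves implicit --- the exponent bookkeeping giving $(\mathfrak{p}_i^k : x) = \mathfrak{p}_i^{k-1}$ versus $\mathfrak{p}_i^k$, and the fact that the factor $\mathfrak{p}_i^k$ dominates the intersection because $\mathfrak{p}_i^k \subseteq \mathfrak{p}_i^{k-1}$ --- so your write-up is, if anything, more complete.
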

\begin{proof}
    Since
    \[
    (I^{(k)} : \mathfrak{p}) = \left( \bigcap_{i=1}^m \mathfrak{p}_i^k : \mathfrak{p} \right) = \bigcap_{i=1}^m (\mathfrak{p}_i^k : \mathfrak{p}),
    \]
    let \( \mathfrak{p}_i^k = (x_{i1}, x_{i2}, \dots, x_{il})^k \), and let \( \mathfrak{p} = (y_1, y_2, \dots, y_t) \). Then,
    \begin{align*}
        (\mathfrak{p}_i^k :\mathfrak{p}) &= ((x_{i1}, x_{i2},\dots,x_{il} )^k: (y_{1}, y_{2},\dots,y_{t})) \\ 
             &= \bigcap_{j=1}^t ((x_{i1}, x_{i2}, \dots, x_{il})^k : y_j). 
    \end{align*}
    If \( y_j \notin \mathfrak{p}_i \) for some \( j \), then one term in the intersection is \( \mathfrak{p}_i^k \), so \( (\mathfrak{p}_i^k : \mathfrak{p}) = \mathfrak{p}_i^k \). Therefore, when \( \mathfrak{p} \neq \mathfrak{p}_i \), we have \( (\mathfrak{p}_i^k : \mathfrak{p}) = \mathfrak{p}_i^k \). Additionally, since \( (\mathfrak{p}^k : \mathfrak{p}) = \mathfrak{p}^{k-1} \) and \( (\mathfrak{p} : \mathfrak{p}) = S \), the result holds.
\end{proof}

\begin{Theorem}
\label{symbolic}
    Let \( I \subseteq S \) be a homogeneous square-free monomial ideal with no embedded associated primes. Let \( m \) be the number of minimal associated primes of \( I \). Then, for \( k \geq 1 \),
    \[
    \mathrm{v}(I^{(k)}) + 1 \leq \mathrm{v}(I^{(k+1)}) \leq \mathrm{v}(I^{(k)}) + m.
    \]
\end{Theorem}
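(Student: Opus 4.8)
The plan is to reduce everything to the per-prime quantities $\mathrm{v}_{\mathfrak{p}}(I^{(k)}) := \alpha((I^{(k)}:\mathfrak{p})/I^{(k)})$ and to compare $\mathrm{v}_{\mathfrak{p}}(I^{(k)})$ with $\mathrm{v}_{\mathfrak{p}}(I^{(k+1)})$ for a fixed minimal prime $\mathfrak{p}$. Since $I$ is square-free it is radical, so $\mathrm{Ass}(S/I^{(k)}) = \mathrm{Min}(I) = \{\mathfrak{p}_1,\dots,\mathfrak{p}_m\}$ and there are no embedded primes; hence the Ficarra formula (the unlabeled Lemma following Lemma \ref{alpha}) applies and gives $\mathrm{v}(I^{(k)}) = \min_i \mathrm{v}_{\mathfrak{p}_i}(I^{(k)})$. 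By Proposition \ref{pp}, $(I^{(k)}:\mathfrak{p}) = \bigcap_{\mathfrak{p}_i \neq \mathfrak{p}} \mathfrak{p}_i^k \cap \mathfrak{p}^{k-1}$, so a monomial $f$ lies in $(I^{(k)}:\mathfrak{p})\setminus I^{(k)}$ exactly when $\ord_{\mathfrak{p}_i}(f) \geq k$ for all $\mathfrak{p}_i \neq \mathfrak{p}$ and $\ord_{\mathfrak{p}}(f) = k-1$, where $\ord_{\mathfrak{q}}(w)$ denotes the sum of the exponents in $w$ of the variables generating $\mathfrak{q}$. This combinatorial reformulation is the engine for both inequalities.

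For the lower bound $\mathrm{v}(I^{(k)}) + 1 \leq \mathrm{v}(I^{(k+1)})$, I would pick a minimal prime $\mathfrak{p}$ and a monomial $g$ with $\deg(g) = \mathrm{v}(I^{(k+1)}) = \mathrm{v}_{\mathfrak{p}}(I^{(k+1)})$, so $\ord_{\mathfrak{p}_i}(g) \geq k+1$ for $\mathfrak{p}_i \neq \mathfrak{p}$ and $\ord_{\mathfrak{p}}(g) = k$. Since $\ord_{\mathfrak{p}}(g) = k \geq 1$, some variable $y$ of $\mathfrak{p}$ divides $g$; set $f = g/y$. Dividing by a single variable lowers $\ord_{\mathfrak{p}}$ by exactly $1$ (to $k-1$) and lowers each $\ord_{\mathfrak{p}_i}$ by at most $1$ (so it stays $\geq k$). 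Hence $f$ satisfies the reformulated membership conditions for $(I^{(k)}:\mathfrak{p})\setminus I^{(k)}$, giving $\mathrm{v}(I^{(k)}) \leq \deg(f) = \mathrm{v}(I^{(k+1)}) - 1$.

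For the upper bound $\mathrm{v}(I^{(k+1)}) \leq \mathrm{v}(I^{(k)}) + m$, I would pick $\mathfrak{p} = \mathfrak{p}_{i_0}$ and $f$ with $\deg(f) = \mathrm{v}(I^{(k)}) = \mathrm{v}_{\mathfrak{p}}(I^{(k)})$, so $\ord_{\mathfrak{p}_i}(f) \geq k$ for $i \neq i_0$ and $\ord_{\mathfrak{p}}(f) = k-1$. I then multiply $f$ by $u = \prod_{i=1}^m x_{j_i}$, where $x_{j_i}$ is a variable of $\mathfrak{p}_i$; crucially, using that distinct minimal monomial primes are incomparable, for each $i \neq i_0$ I can choose $x_{j_i} \in \mathfrak{p}_i \setminus \mathfrak{p}$. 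Then $g = fu$ has $\ord_{\mathfrak{p}_i}(g) \geq k+1$ for $i \neq i_0$ (each such $\mathfrak{p}_i$ gains its factor $x_{j_i}$), while $\ord_{\mathfrak{p}}(u) = 1$ forces $\ord_{\mathfrak{p}}(g) = k$. Thus $g$ witnesses $(I^{(k+1)}:\mathfrak{p})\setminus I^{(k+1)}$, and $\mathrm{v}(I^{(k+1)}) \leq \deg(g) = \deg(f) + \deg(u) \leq \mathrm{v}(I^{(k)}) + m$.

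The main obstacle is the bookkeeping on $\mathfrak{p}$-orders: one must guarantee that the single multiplication or division moves $\ord_{\mathfrak{p}}$ by exactly one unit (so that $g \notin \mathfrak{p}^{k+1}$ in the upper bound and $f \notin \mathfrak{p}^{k}$ in the lower bound, which is what keeps them out of the respective symbolic power), while never dropping any $\ord_{\mathfrak{p}_i}$ with $i \neq i_0$ below its threshold. The selection $x_{j_i} \in \mathfrak{p}_i \setminus \mathfrak{p}$ is precisely what makes $\ord_{\mathfrak{p}}(u) = 1$ rather than larger, and this is where incomparability of the minimal primes (equivalently, the square-free, no-embedded-primes hypothesis ensuring $\mathrm{Min}(I)$ is an antichain of monomial primes) is genuinely used.
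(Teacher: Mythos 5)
Your proposal is correct and follows essentially the same route as the paper: both arguments reduce to the colon structure $(I^{(k)}:\mathfrak{p}) = \mathfrak{p}^{k-1}\cap\bigcap_{\mathfrak{p}_i\neq\mathfrak{p}}\mathfrak{p}_i^{k}$ of Proposition \ref{pp} together with the no-embedded-primes formula $\mathrm{v} = \min_{\mathfrak{p}}\alpha((\cdot:\mathfrak{p})/\cdot)$, obtain the lower bound by dividing a minimal-degree witness by a single variable of the distinguished prime, and obtain the upper bound by multiplying a minimal-degree witness by a degree-$m$ monomial containing one variable from each minimal prime. If anything, your upper-bound bookkeeping is tighter than the paper's: choosing $x_{j_i}\in\mathfrak{p}_i\setminus\mathfrak{p}$ so that $\mathrm{ord}_{\mathfrak{p}}(u)=1$ exactly guarantees the product lies outside $\mathfrak{p}^{k+1}$ and hence outside $I^{(k+1)}$, a non-membership check the paper's proof asserts only implicitly.
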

\begin{proof}
    Let \( I = \bigcap_{i=1}^m \mathfrak{p}_i \), so \( I^{(k)} = \bigcap_{i=1}^m \mathfrak{p}_i^k \). By the definition of associated primes, \( \mathrm{Ass}(I^{(k)}) = \mathrm{Ass}(I^{(k+1)}) \). Combining with Proposition \ref{pp}, we have
    \[
    (I^{(k+1)} : \mathfrak{p}_1 \dots \mathfrak{p}_m) = (((I^{(k+1)} : \mathfrak{p}_1) : \dots) : \mathfrak{p}_m) = I^{(k)}.
    \]
    Let \( \mathfrak{p} \) be the minimal associated prime corresponding to \( \mathrm{v}(I^{(k)}) \), and let \( \mathfrak{p}' \) be the minimal associated prime corresponding to \( \mathrm{v}(I^{(k+1)}) \). Then,
    \[
    (I^{(k)} : \mathfrak{p}) \setminus I^{(k)} = \left( (I^{(k+1)} : \mathfrak{p}') : \prod_{\mathfrak{p}_i \neq \mathfrak{p}'} \mathfrak{p}_i \mathfrak{p} \right) \setminus I^{(k)}.
    \]
    Since the degree of the terms in \( \prod_{\mathfrak{p}_i \neq \mathfrak{p}'} \mathfrak{p}_i \mathfrak{p} \) is fixed at \( m \), and \( I^{(k+1)} \subseteq I^{(k)} \), by Lemma \ref{alpha},
    \[
    \mathrm{v}(I^{(k+1)}) \leq \mathrm{v}(I^{(k)}) + m.
    \]

    Next, we prove \( \mathrm{v}(I^{(k+1)}) \geq \mathrm{v}(I^{(k)}) + 1 \). Let \( \mathfrak{p}_i = (x_{i1}, \dots, x_{i t_i}) \), where \( t_i \) is the number of generators of \( \mathfrak{p}_i \). Let \( g \in (I^{(k+1)} : \mathfrak{p}_1) \setminus I^{(k+1)} \) with \( \mathrm{deg}(g) = \mathrm{v}(I^{(k+1)}) \). Then,
    \[
    g = \mathbf{x}^{\alpha_g} \in \bigcap_{i=2}^m \mathfrak{p}_i^{k+1} \cap \mathfrak{p}_1^k,
    \]
    satisfying
    \[
    \alpha_g \cdot \mathbf{1}_{\mathfrak{p}_i} \geq k + 1 \quad (2 \leq i \leq m), \quad \alpha_g \cdot \mathbf{1}_{\mathfrak{p}_1} \geq k,
    \]
    where \( \mathbf{x}^{\alpha_g} = x_1^{\alpha_1} x_2^{\alpha_2} \dots x_n^{\alpha_n} \) is a monomial, \( \alpha_i \) is the degree of \( x_i \) in \( g \), \( \alpha_g = (\alpha_1, \dots, \alpha_n) \), and \( \mathbf{1}_{\mathfrak{p}_i} \) is an \( n \)-dimensional vector with $1$ at positions corresponding to the generators of \( \mathfrak{p}_i \) and $0$ elsewhere.

    Let \( u = x_i \in \mathfrak{p}_1 \) for some \( 1 \leq i \leq t_1 \). Define \( f = \frac{g}{u} \), and let \( f = \mathbf{x}^{\alpha_f} \), satisfying
    \[
    \alpha_f \cdot \mathbf{1}_{\mathfrak{p}_i} \geq k \quad (2 \leq i \leq m), \quad \alpha_f \cdot \mathbf{1}_{\mathfrak{p}_1} \geq k - 1.
    \]
    Therefore,
    \[
    f \in \bigcap_{i=2}^m \mathfrak{p}_i^k \cap \mathfrak{p}_1^{k-1} = (I^{(k)} : \mathfrak{p}_1),
    \]
    and
    \[
    g \notin I^{(k+1)} \iff f = \frac{g}{u} \notin I^{(k)}.
    \]
    Thus, \( f \in (I^{(k)} : \mathfrak{p}) \setminus I^{(k)} \), and we obtain
    \[
    \mathrm{v}(I^{(k+1)}) = \mathrm{deg}(g) \geq \mathrm{deg}(f) + \mathrm{deg}(u) \geq \mathrm{v}(I^{(k)}) + 1.
    \]
\end{proof}

\begin{Theorem}
\label{complete}
    Let \( G \) be a complete graph with vertex set \( V(G) = \{ x_1, \dots, x_n \} \), and let \( I \) be the edge ideal of \( G \). Then, for any \( k \geq 1 \),
    \[
    \mathrm{v}(I^{(k)}) = k + [\frac{k-1}{n-1}] + 1.
    \]
\end{Theorem}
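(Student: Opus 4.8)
The plan is to reduce the computation to a small integer optimization over exponent vectors. First I would record the combinatorics of $G=K_n$: every pair of vertices is an edge, so a vertex cover can omit at most one vertex, and the minimal vertex covers are exactly the sets $V\setminus\{x_i\}$. Hence $\mathrm{Min}(I)=\{\mathfrak{q}_1,\dots,\mathfrak{q}_n\}$ with $\mathfrak{q}_i=(x_j:j\neq i)$, each of height $n-1$, so $I$ is unmixed with $m=n$ minimal primes and no embedded primes; the same holds for every symbolic power, since by definition $\mathrm{Ass}(S/I^{(k)})=\mathrm{Min}(I)$. The cited lemma on ideals without embedded primes then gives $\mathrm{v}(I^{(k)})=\min_{i}\alpha\big((I^{(k)}:\mathfrak{q}_i)/I^{(k)}\big)$, and since the symmetric group $S_n$ acts transitively on the $\mathfrak{q}_i$ while fixing $I^{(k)}$, all $n$ terms coincide. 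Thus it suffices to analyze a single prime, say $\mathfrak{q}_1$.

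Next I would make the colon ideal explicit. Writing $I^{(k)}=\bigcap_{i=1}^n\mathfrak{q}_i^{\,k}$ and applying Proposition \ref{pp},
\[
(I^{(k)}:\mathfrak{q}_1)=\Big(\bigcap_{i\neq 1}\mathfrak{q}_i^{\,k}\Big)\cap\mathfrak{q}_1^{\,k-1}.
\]
For a monomial $f=x_1^{a_1}\cdots x_n^{a_n}$ of degree $d=\sum_i a_i$, membership in $\mathfrak{q}_i^{\,t}$ is equivalent to $d-a_i\ge t$, because $\mathfrak{q}_i$ is generated by all variables except $x_i$. Therefore $f\in(I^{(k)}:\mathfrak{q}_1)$ translates into $d-a_i\ge k$ for all $i\neq 1$ together with $d-a_1\ge k-1$, while the only way to have $f\notin I^{(k)}=\bigcap_i\mathfrak{q}_i^{\,k}$ under these constraints is $d-a_1<k$. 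Combining, $f\in(I^{(k)}:\mathfrak{q}_1)\setminus I^{(k)}$ forces $d-a_1=k-1$, that is $a_1=d-k+1$ and $\sum_{i\ge 2}a_i=k-1$, subject to $0\le a_i\le d-k$ for every $i\ge 2$.

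Finally I would minimize the degree. The problem becomes: find the least $d$ for which the value $k-1$ can be distributed among the $n-1$ exponents $a_2,\dots,a_n$ with each part at most $d-k$; this is feasible exactly when $(n-1)(d-k)\ge k-1$, giving $d=k+\lceil (k-1)/(n-1)\rceil$ together with an explicit extremal monomial $f=x_1^{\,d-k+1}x_2^{a_2}\cdots x_n^{a_n}$. Two points need care. The first is checking that this minimal $f$ has colon ideal \emph{equal} to $\mathfrak{q}_1$ rather than strictly larger; this holds because adjoining any power of $x_1$ leaves the degree of $f$ in the variables of $\mathfrak{q}_1$ fixed at $k-1<k$, so no power of $x_1$ lies in $(I^{(k)}:f)$, while every monomial involving some $x_j$ ($j\neq 1$) already sits in $\mathfrak{q}_1$. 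The second, which I expect to be the real obstacle, is the rounding: one should anchor the additive constant on the base case $k=1$, where the formula must reproduce $\mathrm{v}(I^{(1)})=\mathrm{v}(I(K_n))=1$ (witnessed by $f=x_1$). Carrying the optimization through yields $\mathrm{v}(I^{(k)})=k+\big[\tfrac{k-1}{n-1}\big]$ in the paper's ceiling notation, so the principal task of the write-up is to pin down this constant rigorously against the small cases $k=1$ and $n=2$ and to confirm it agrees with the displayed closed form.
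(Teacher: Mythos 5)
Your proposal follows essentially the same route as the paper's own proof: both reduce the problem, via the no-embedded-primes formula (Lemma \ref{alpha}) and the colon description of Proposition \ref{pp}, to minimizing the degree $d$ of a monomial $x_1^{a_1}\cdots x_n^{a_n}$ subject to $\sum_{i\ge 2}a_i=k-1$ and $a_i\le d-k$ for $i\ge 2$, and both solve this by the same counting inequality. Your execution is correct, and the discrepancy you flag in your last paragraph is real and is not a defect of your argument: the optimization gives $d_{\min}=k+\lceil\frac{k-1}{n-1}\rceil$, so $\mathrm{v}(I^{(k)})=k+\lceil\frac{k-1}{n-1}\rceil$, which is exactly one less than the displayed formula of Theorem \ref{complete}. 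Two checks confirm your value against the theorem's: for $k=1$ the displayed formula gives $2$ while $\mathrm{v}(I)=1$ (as the paper itself asserts in the first line of its proof); and for $n=3$, $k=2$ the monomial $f=x_1^2x_2$ lies in $(I^{(2)}:\mathfrak{p}_1)\setminus I^{(2)}$ with $(I^{(2)}:f)=(x_2,x_3)$, and no monomial of degree $\le 2$ can satisfy the constraints, so $\mathrm{v}(I^{(2)})=3$ whereas the displayed formula gives $4$.

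The mismatch originates in the paper, not in your plan. In the paper's proof, $m$ is defined by $k-1=m(n-1)+r$ with $r\ge 1$, so $m$ is in effect $\lfloor\frac{k-1}{n-1}\rfloor$ (or $\frac{k-1}{n-1}-1$ when $(n-1)\mid(k-1)$); with that reading, the lower bound $\sum_i\alpha_i\ge k+m+1$ derived there agrees with your $k+\lceil\frac{k-1}{n-1}\rceil$, but the final restatement as $k+[\frac{k-1}{n-1}]+1$ under the paper's ceiling convention for $[\,\cdot\,]$ introduces the spurious $+1$. Moreover, the paper's exhibited extremal element $x_1^2x_2^{m+1}\cdots x_{r+1}^{m+1}x_{r+2}^m\cdots x_n^m$ has total degree $k+1$, so it lies in the graded piece $(L_k)_{k+m+1}$, and indeed in $(I^{(k)}:\mathfrak{p}_1)\setminus I^{(k)}$ at all, only when $m=0$; for $m\ge 1$ its exponents violate the constraint $a_i\le d-k$. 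The correct witness, which your feasibility analysis produces, is $x_1^{m+2}x_2^{m+1}\cdots x_{r+1}^{m+1}x_{r+2}^m\cdots x_n^m$ of degree $k+m+1$ (and $x_1^{c+1}(x_2\cdots x_n)^c$ with $c=\frac{k-1}{n-1}$ when $(n-1)\mid(k-1)$). So carrying out your write-up as planned would prove the corrected statement $\mathrm{v}(I^{(k)})=k+\lceil\frac{k-1}{n-1}\rceil$ for all $k\ge 1$, rather than the formula as displayed in the theorem.
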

\begin{proof}
    It is known that the \(\mathrm{v}\)-\(\mathrm{number}\) of the edge ideal of a complete graph is 1, which satisfies the formula. Now, we prove the case for \( k \geq 2 \). Let
    \[
    \mathfrak{p}_i = \langle x_1, \dots, x_{i-1}, x_{i+1}, \dots, x_n \rangle \quad (2 \leq i \leq n-1),
    \]
    \( \mathfrak{p}_1 = \langle x_2, x_3, \dots, x_n \rangle \), and \( \mathfrak{p}_n = \langle x_1, x_2, \dots, x_{n-1} \rangle \). Then, \( I^{(k)} = \bigcap_{i=1}^n \mathfrak{p}_i^k \). Since \( I \) is the edge ideal of a complete graph, it has no embedded associated primes. Consider the quotient ideal
    \[
    L_k = \frac{I^{(k)} : \mathfrak{p}_1}{I^{(k)}} = \frac{\mathfrak{p}_1^{k-1} \cap \left( \bigcap_{i=2}^n \mathfrak{p}_i^k \right)}{\bigcap_{i=1}^n \mathfrak{p}_i^k}.
    \]
    Let \( \prod_{i=1}^n x_i^{\alpha_i} + I^{(k)} \) be a nonzero element in \( L_k \). Then:
    \begin{itemize}
        \item[(1)] For any \( 2 \leq i_1 < i_2 < \dots < i_{n-2} \leq n \),
        \[
        \alpha_1 + \alpha_{i_1} + \alpha_{i_2} + \dots + \alpha_{i_{n-2}} \geq k.
        \]
        \item[(2)] \( \alpha_2 + \dots + \alpha_n = k - 1 \).
    \end{itemize}
    From the inequality
    \begin{align*}
        (n-1)\sum_{i=1}^{n} \alpha_i &= \sum_{j=2}^{n}\sum_{i\neq j} \alpha_i + (\alpha_2 + \alpha_3 + \dots +\alpha_n) \\
        &\geq (n-1)k +k-1 \\
        &= nk-1, 
    \end{align*}
    we have \( \sum_{i=1}^n \alpha_i \geq \frac{nk - 1}{n-1} \). Let \( k = m(n-1) + r + 1 \), where \( 1 \leq r < n-1 \). Then,
    \[
    k + m < \frac{nk - 1}{n-1} < k + m + 1.
    \]
    When \( r = n-1 \), the right-hand side of the inequality becomes an equality. Since \( \sum_{i=1}^n \alpha_i \) is an integer, for any \( k \geq 2 \),
    \[
    \sum_{i=1}^n \alpha_i \geq k + m + 1,
    \]
    where \( m = [\frac{k-1}{n-1}] \). Therefore, for any \( s \leq k + m \), \( (L_k)_s = 0 \), and a nonzero element in \( (L_k)_{k+m+1} \) is
    \[
    x_1^2 x_2^{m+1} x_3^{m+1} \dots x_{r+1}^{m+1} x_{r+2}^m \dots x_n^m + I^{(n)}.
    \]
    
    In summary, by Lemma \ref{alpha}, \( \mathrm{v}(I^{(k)}) = k + [\frac{k-1}{n-1}] + 1 \).
\end{proof}

\textbf{Acknowledgment:} The authors are grateful to the software systems \cite{team2004cocoa} for providing us with a large number
of examples to develop ideas and test our results.

\textbf{Statement:} On behalf of all authors, the corresponding author states that there is
 no conflflict of interest.

\newpage

\bibliographystyle{plain}
\bibliography{ref}

\begin{thebibliography}{10}

\bibitem{biswas2024study}
Prativa Biswas and Mousumi Mandal.
\newblock A study of v-number for some monomial ideals.
\newblock {\em Collectanea Mathematica}, pages 1--16, 2024.

\bibitem{biswas2024asymptotic}
Prativa Biswas, Mousumi Mandal, and Kamalesh Saha.
\newblock Asymptotic behaviour and stability index of v-numbers of graded
  ideals.
\newblock {\em arXiv preprint arXiv:2402.16583}, 2024.

\bibitem{chu2017stanley}
Lizhong Chu and VH~Jorge P{\'e}rez.
\newblock The stanley regularity of complete intersections and ideals of mixed
  products.
\newblock {\em Journal of Algebra and Its Applications}, 16(07):1750122, 2017.

\bibitem{civan2023v}
Yusuf Civan.
\newblock The v-number and castelnuovo--mumford regularity of graphs.
\newblock {\em Journal of Algebraic Combinatorics}, 57(1):161--169, 2023.

\bibitem{cooper2020generalized}
Susan~M Cooper, Alexandra Seceleanu, {\c{S}}tefan~O Toh{\u{a}}neanu, Maria~Vaz
  Pinto, and Rafael~H Villarreal.
\newblock Generalized minimum distance functions and algebraic invariants of
  geramita ideals.
\newblock {\em Advances in Applied Mathematics}, 112:101940, 2020.

\bibitem{crupi2023matchings}
Marilena Crupi, Antonino Ficarra, and Ernesto Lax.
\newblock Matchings, squarefree powers and betti splittings.
\newblock {\em arXiv preprint arXiv:2304.00255}, 2023.

\bibitem{fakhari2024castelnuovo}
SA~Seyed Fakhari.
\newblock On the castelnuovo--mumford regularity of squarefree powers of edge
  ideals.
\newblock {\em Journal of Pure and Applied Algebra}, 228(3):107488, 2024.

\bibitem{ficarra2023simonconjecturetextvnumbermonomial}
Antonino Ficarra.
\newblock Simon conjecture and the $\text{v}$-number of monomial ideals, 2023.

\bibitem{ficarra2024v}
Antonino Ficarra and Pedro~Macias Marques.
\newblock The v-function of powers of sums of ideals.
\newblock {\em arXiv preprint arXiv:2405.16882}, 1, 2024.

\bibitem{ficarra2023asymptotic}
Antonino Ficarra and Emanuele Sgroi.
\newblock Asymptotic behaviour of the v-number of homogeneous ideals.
\newblock {\em arXiv preprint arXiv:2306.14243}, 1(2), 2023.

\bibitem{fiorindo2024asymptotic}
Luca Fiorindo and Dipankar Ghosh.
\newblock On the asymptotic behavior of the vasconcelos invariant for graded
  modules.
\newblock {\em Nagoya Mathematical Journal}, pages 1--15, 2024.

\bibitem{grisalde2021induced}
Gonzalo Grisalde, Enrique Reyes, and Rafael~H Villarreal.
\newblock Induced matchings and the v-number of graded ideals.
\newblock {\em Mathematics}, 9(22):2860, 2021.

\bibitem{gu2017regularity}
Yan Gu.
\newblock Regularity of powers of edge ideals of some graphs.
\newblock {\em Acta Mathematica Vietnamica}, 42:445--454, 2017.

\bibitem{iqbal2019depth}
Zahid Iqbal and Muhammad Ishaq.
\newblock Depth and stanley depth of the edge ideals of the powers of paths and
  cycles.
\newblock {\em An. Sti. U. Ovid. Co-Mat}, 27:113--135, 2019.

\bibitem{iqbal2018depth}
Zahid Iqbal, Muhammad Ishaq, and Muhammad Aamir.
\newblock Depth and stanley depth of the edge ideals of square paths and square
  cycles.
\newblock {\em Communications in Algebra}, 46(3):1188--1198, 2018.

\bibitem{jaramillo2021v}
Delio Jaramillo and Rafael~H Villarreal.
\newblock The v-number of edge ideals.
\newblock {\em Journal of Combinatorial Theory, Series A}, 177:105310, 2021.

\bibitem{martinez2017minimum}
Jos{\'e} Mart{\'\i}nez-Bernal, Yuriko Pitones, and Rafael~H Villarreal.
\newblock Minimum distance functions of graded ideals and reed--muller-type
  codes.
\newblock {\em Journal of Pure and Applied Algebra}, 221(2):251--275, 2017.

\bibitem{nunez2017footprint}
Luis N{\'u}{\~n}ez-Betancourt, Yuriko Pitones, and Rafael~H Villarreal.
\newblock Footprint and minimum distance functions.
\newblock {\em arXiv preprint arXiv:1712.00387}, 2017.

\bibitem{peeva2010graded}
Irena Peeva.
\newblock {\em Graded syzygies}, volume~14.
\newblock Springer Science \& Business Media, 2010.

\bibitem{rinaldo2012sequentially}
Giancarlo Rinaldo.
\newblock Sequentially cohen-macaulay mixed product ideals.
\newblock {\em arXiv preprint arXiv:1205.0478}, 2012.

\bibitem{saha2024binomialexpansionmathrmvnumber}
Kamalesh Saha.
\newblock Binomial expansion and the $\mathrm{v}$-number, 2024.

\bibitem{saha2022v}
Kamalesh Saha and Indranath Sengupta.
\newblock The v-number of monomial ideals.
\newblock {\em Journal of Algebraic Combinatorics}, 56(3):903--927, 2022.

\bibitem{sarkar2024v}
Parangama Sarkar et~al.
\newblock $ v $-numbers of symbolic power filtrations.
\newblock {\em arXiv preprint arXiv:2403.09175}, 2024.

\bibitem{simis1994ideal}
Aron Simis, Wolmer~V Vasconcelos, and Rafael~H Villarreal.
\newblock On the ideal theory of graphs.
\newblock {\em Journal of Algebra}, 167(2):389--416, 1994.

\bibitem{team2004cocoa}
CoCoA Team.
\newblock Cocoa: a system for doing computations in commutative algebra.
\newblock {\em cocoa. dima. unige. it}, 2004.

\bibitem{villarreal2001monomial}
Rafael~H Villarreal.
\newblock {\em Monomial algebras}, volume 238.
\newblock Marcel Dekker New York, 2001.

\end{thebibliography}

\end{document}